\newcommand{\Mod}[1]{\ (\mathrm{mod}\ #1)}
\g@addto@macro \normalsize {%
 \setlength\abovedisplayskip{7pt}%
 \setlength\belowdisplayskip{6pt}%
}
\newtheorem{thm}[equation]{Theorem}
\newtheorem{cor}[equation]{Corollary}
\newtheorem{prop}[equation]{Proposition}
\theoremstyle{remark}
\newtheorem{rem}[equation]{Remark}
\theoremstyle{definition}
\newtheorem{defn}[equation]{Definition}
\newtheorem{prob}{Problem}
\numberwithin{equation}{section}
\newcommand{\gb}{\beta}
\newcommand{\ga}{\alpha}
\newcommand{\gL}{\Lambda}
\newcommand{\gD}{\Delta}
\newcommand{\eps}{\varepsilon}
\newcommand{\fa}{{\mathfrak a}}             
\newcommand{\fb}{{\mathfrak b}}
\newcommand{\fg}{{\mathfrak g}}
\newcommand{\fh}{{\mathfrak h}}
\newcommand{\fl}{{\mathfrak l}}
\newcommand{\fm}{{\mathfrak m}}
\newcommand{\fn}{{\mathfrak n}}
\newcommand{\fp}{{\mathfrak p}}
\newcommand{\fy}{{\mathfrak y}}
\newcommand{\f}{\mathfrak}
\newcommand{\R}{\mathbb{R}}          
\newcommand{\C}{\mathbb{C}}          
\newcommand{\Z}{\mathbb{Z}}
\newcommand{\ad}{\mathrm{ad}}
\newcommand{\Ad}{\mathrm{Ad}}
\newcommand{\Cal}{\mathcal}
\newcommand{\Hom}{\operatorname{Hom}}
\renewcommand{\Im}{\mathrm{Im}}
\newcommand{\Ind}{\mathrm{Ind}}
\newcommand{\IP}[2]{\langle#1 , #2\rangle}     
\newcommand{\spn}{\text{span}}
\newcommand{\Tr}{\text{Tr}}
\newcommand{\Sol}{\mathrm{Sol}}
\newcommand{\Pol}{\mathrm{Pol}}
\newcommand{\poly}{\mathrm{poly}}
\newcommand{\To}{\longrightarrow}
\newcommand{\Diff}{\mathrm{Diff}}
\newcommand{\Irr}{\mathrm{Irr}}
\newcommand{\triv}{\mathrm{triv}}
\newcommand{\fin}{\mathrm{fin}}
\newcommand{\sym}{\mathrm{sym}}
\newcommand{\D}{\Cal{D}}
\newcommand{\wH}{\widetilde{H}}
\newcommand{\diag}{\mathrm{diag}}
\newcommand{\dpi}{d\pi}
\newcommand{\id}{\mathrm{id}}
\newcommand{\pr}{\mathrm{pr}}
\newcommand{\symb}{\mathrm{symb}}
\newcommand{\sgn}{\mathrm{sgn}}
\newcommand{\std}{\mathrm{std}}
\newcommand{\RP}{\mathbb{R}\mathbb{P}}
\newcommand{\Ker}{\mathrm{Ker}}
\newcommand{\abs}[1]{\left\vert#1\right\vert}
\providecommand*{\donothing}[1]{}
\begin{document}

\baselineskip=16pt
\tabulinesep=1.2mm


\title[]{
On the intertwining differential operators from a line bundle to a vector bundle 
over the real projective space
 }

\dedicatory{Dedicated to the memory of Gerrit van Dijk}

\author{Toshihisa Kubo}
\author{Bent {\O}rsted}

\address{Faculty of Economics, 
Ryukoku University,
67 Tsukamoto-cho, Fukakusa, Fushimi-ku, Kyoto 612-8577, Japan}
\email{toskubo@econ.ryukoku.ac.jp}

\address{Department of Mathematics, 
Aarhus University,
Ny Munkegade 118 DK-8000 Aarhus C, Denmark}
\email{orsted@imf.au.dk}

\subjclass[2020]{
22E46, 
17B10} 
\keywords{Bol operator,
intertwining differential operator,
generalized Verma module,
F-method,
$K$-type formula,
standard map}

\date{\today}

\maketitle


\begin{abstract} 
We classify and construct $SL(n,\R)$-intertwining differential operators $\D$ 
from a  line bundle to a vector bundle 
over the real projective space $\RP^{n-1}$ by the F-method. 
This generalizes a classical result of Bol for $SL(2,\R)$.
Further, we classify the $K$-type formulas for 
the kernel $\Ker(\D)$ and image $\Im(\D)$ of $\D$.
The standardness of the homomorphisms $\varphi$ corresponding to
the differential operators $\D$ between
generalized Verma modules are also discussed.
\end{abstract}


\section{Introduction}\label{sec:intro}

Gerrit van Dijk worked on harmonic analysis on $p$-adic groups and real Lie groups, inspired by Harish-Chandra as a post-doc at IAS Princeton. He studied both abstract questions such as the nature of convolution algebras of functions on symmetric spaces, but also concrete special functions and distributions on such spaces. He also considered induced representations from a parabolic subgroup $P$ to a reductive Lie group $G$ and the explicit structure of such in concrete cases. In particular, he jointly with Molchanov studied the case of $G/P$ being 
the real projective space (\cite{vDM99}).
In this paper we aim to study intertwining differential operators between
such induced representations.

\subsection{Main problems}
 
To state the main problems of this paper, we first introduce some notation.
Let $G$ be a real reductive Lie group with complexified Lie algebra $\fg$. 
Let $P$ be a parabolic subgroup with 
Langlands decomposition $P=MAN_+$. 
We write $\Irr(M)_\fin$ for the set of  equivalence classes of 
finite-dimensional irreducible representations of $M$.
Likewise, let $\Irr(A)$ denote the set of characters of $A$. 
Then, for the outer tensor product $\varpi\boxtimes \nu \boxtimes \triv$ 
of $\varpi \in \Irr(M)_\fin$, $\nu \in \Irr(A)$, and the trivial representation
$\triv$ of $N_+$, we put
\begin{equation*}
I(\varpi,\nu) = \Ind_{P}^G(\varpi\boxtimes \nu \boxtimes \triv)
\end{equation*}
for an (unnormalized) parabolically induced representation of $G$.
Let $\Diff_G(I(\sigma,\lambda), I(\varpi,\nu))$ denote the space of $G$-intertwining
differential operators $\D \colon I(\sigma,\lambda) \to I(\varpi,\nu)$.

There are two main problems in this paper. The first problem concerns
the classification and construction of 
intertwining differential operators $\D \in \Diff_G(I(\sigma,\lambda), I(\varpi,\nu))$
as follows.
 
\begin{prob}\label{prob:A}
Do the following.
\begin{enumerate}
\item[(A1)] Classify $(\sigma, \lambda), (\varpi, \nu) \in \Irr(M)_\fin\times \Irr(A)$ 
such that
\begin{equation*}
\Diff_G(I(\sigma,\lambda), I(\varpi,\nu))\neq\{0\}.
\end{equation*}

\item[(A2)] 
For $(\sigma, \lambda), (\varpi, \nu) \in \Irr(M)_\fin\times \Irr(A)$ classified in (A1),
determine the dimension 
\begin{equation*}
\dim \Diff_G(I(\sigma,\lambda), I(\varpi,\nu)).
\end{equation*}

\item[(A3)] 
For $(\sigma, \lambda), (\varpi, \nu) \in \Irr(M)_\fin\times \Irr(A)$ classified in (A1),
construct generators 
\begin{equation*}
\D \in \Diff_G(I(\sigma,\lambda), I(\varpi,\nu)).
\end{equation*}
\end{enumerate}
\end{prob}

Let $K$ be a maximal compact subgroup of $G$.
For $\D \in \Diff_G(I(\sigma,\lambda), I(\varpi,\nu))$, the kernel $\Ker(\D)$ and 
image $\Im(\D)$ are $G$-invariant subspaces of $I(\sigma,\lambda)$ and 
$I(\varpi, \nu)$, respectively. Let $\Ker(\D)_K$ and $\Im(\D)_K$ denote the space of 
$K$-finite vectors of $\Ker(\D)$ and $\Im(\D)$, respectively.
The following is the other main problem of this paper.

\begin{prob}\label{prob:B}
Classify the $K$-type formulas of $\Ker(\D)_K$ and $\Im(\D)_K$.
\end{prob}

For instance, if $G= SL(3,\R)$, then the maximal compact subgroup $K$ is
$K=SO(3)$. Problem \ref{prob:B} then 
asks how $\Ker(\D)_K$ and $\Im(\D)_K$ decompose 
as $SO(3)$-modules. 
We shall answer this question in Corollary \ref{cor:IDO4} for $SL(n,\R)$
with $n\geq 3$.

\vskip 0.1in

Problem \ref{prob:A} for the first order intertwining differential operators
for general $(G, P)$ was done around 2000 
independently by  
{\O}rsted \cite{Orsted00},
Slov{\'a}k--Sou{\v c}ek \cite{SS00},
and 
Johnson--Kor{\' a}nyi--Reimann \cite{JKR03}, which
generalize the work of Fegan \cite{Fegan76} for $G = SO(n,1)$.
See also the works of
Kor{\' a}nyi--Reimann \cite{KR00} and
Xiao \cite{Xiao15} as relevant works for this case.
The higher order case is still a work in progress.
For some recent studies, see, for instance, 
Barchini--Kable--Zierau \cite{BKZ08, BKZ09}, 
Kable  \cite{Kable12A, Kable12C, Kable13, Kable15, Kable18a, Kable18b}, 
Kobayashi--{\O}rsted--Somberg--Sou{\v c}ek \cite{KOSS15}, and
the first author \cite{Kubo14a, Kubo14b}.

Problem \ref{prob:A} has been paid attention especially in conformal geometry. 
The Yamabe operator, also known as the conformal Laplacian, is a classical example
of a conformally covariant differential operator (cf.\ \cite{KO03a, LP87}).
In this paper we consider the projective structure in parabolic geometry.
In other words, our aim is to classify and construct ``projectively covariant'' differential operators.

On the study of intertwining differential operators,
the BGG sequence is an important background (cf.\ \cite{CD01, CSS01}).
The kernel  of the first BGG operators has also been studied carefully
from a geometric point of view (cf.\ \cite{CGH12b, CGH12a, GZ23}).
Intertwining differential operators are also studied intensively by Dobrev
in quantum physics
(cf.\ \cite{Dobrev88, Dobrev13, Dobrev16, Dobrev17, Dobrev18, Dobrev19}).

\vskip 0.1in

 We next describe the concrete setup of this paper.

\subsection{Specialization to $(SL(n,\R), P_{1,n-1};(\pm, \triv))$}

In this paper we consider Problems \ref{prob:A} and \ref{prob:B} for 
$(G, P; \sigma) = (SL(n,\R), P_{1,n-1};(\pm, \triv))$, 
where $P_{1,n-1}$ is the maximal parabolic subgroup of $G$ corresponding to
the partition $n=1+(n-1)$ so that $G/P_{1,n-1}$ is diffeomorphic to
the real projective space $\RP^{n-1}$.
The representation $(\pm,\triv) \in \Irr(M)_{\fin}$ denotes a one-dimensional
representation of $M\simeq SL^\pm(n-1,\R)$. The details will be discussed 
in Section \ref{sec:notation}.
We shall solve Problem \ref{prob:A} in Theorems \ref{thm:IDO1} and \ref{thm:IDO},
and Problem \ref{prob:B} in Corollary \ref{cor:IDO4}. 

In 1949, Bol from projective differential geometry showed that 
$SL(2,\R)$-intertwining differential operators for an equivariant line bundle
over $S^1$ 
are only
the powers of the standard derivative $\frac{d^k}{dx^k}$
(cf.\ \cite{Bol49} and \cite[Thm. 2.1.2]{OT05}). 
The operator $\frac{d^k}{dx^k}$ is sometimes called
the \emph{Bol operator}. 
For recent results on analogues of the Bol operator for Lie superalgebras,
see, for instance, \cite{BLS22a, BLS22b}. 
We successfully generalize the classical result
of Bol on $S^1$, which is a double cover of $\RP^1$, to $SL(n,\R)$ on $\RP^{n-1}$.

It is classically known that the Bol operators $\frac{d^k}{dx^k}$ are the residue
operators of the Knapp--Stein operator.
In contrast to the case of $n=2$, the Knapp--Stein operator does not exist
for $(SL(n,\R), P_{1,n-1})$ for $n\geq 3$ (cf.\ \cite{MS14}).
Thus, the differential operators obtained in Theorem \ref{thm:IDO} are 
not the residue operators of such.
In the sense of Kobayashi--Speh \cite{KS18},
our differential operators are all \emph{sporadic operators} for $n \geq 3$.

\subsection{The F-method}
Our main tool to work on Problem \ref{prob:A} is the so-called F-method.
This is a fascinating method invented by Toshiyuki Kobayashi
around 2010 in the course of the study of his branching program
(see, for instance, \cite{Kobayashi15}, \cite[Introduction]{KOSS15},  and \cite[Sect.\ B]{Tanimura19}).
Since then, Problem A has been intensively studied by the F-method
especially in symmetry breaking setting
(cf.\ \cite{FJS20, Kobayashi13, Kobayashi14, KKP16, KP1, KP2, KOSS15, 
KrSom17c, KrSom17a, KrSom17b, MS13, Perez23, Somberg13+}). 

The F-method makes it possible to
classify and construct intertwining differential operators
(or more generally speaking, 
\emph{differential symmetry breaking operators}) by solving
a certain system of partial differential equations.
To describe the main idea more precisely, recall that
it follows from a fundamental work of Kostant \cite{Kostant75} 
that the space $\Diff_G(I(\sigma,\lambda), I(\varpi,\nu))$
of intertwining differential operators
is isomorphic to the space of $(\fg, P)$-homomorphisms between generalized Verma modules (see also \cite{CS90, Dobrev88, HJ82, Huang93, KR00}). 
Schematically, we have
\begin{equation}\label{eqn:SHD}
\textnormal{Hom}(\text{Verma}) 
\stackrel{\sim}{\To}
\Diff_G(I(\sigma,\lambda), I(\varpi,\nu)).
\end{equation}
The precise statement will be given in Theorem \ref{thm:duality}.
In general, it is easier to work with the Verma module side 
``$\Hom(\text{Verma})$''
than the differential operator side $\Diff_G(I(\sigma,\lambda), I(\varpi,\nu))$.
Thus, the standard strategy to tackle Problem \ref{prob:A} is to convert the problem
into the one for generalized Verma modules. Nonetheless,
even in a case that the unipotent radical $N_+$ is abelian, 
it requires involved combinatorial computations 
(see, for instance, \cite[Chap.\ 5]{Juhl09}).

The novel idea of the F-method is to further identify
$\Diff_G(I(\sigma,\lambda), I(\varpi,\nu))$ 
with the space 
of polynomial solutions to a system of 
PDEs by applying a Fourier transform
to a generalized Verma module. In other words, the F-method puts another picture
``$\textnormal{Sol}(\text{PDE})$'' to \eqref{eqn:SHD} as follows.
\begin{equation}\label{eqn:SHD2}
\xymatrix@R-=1.0pc@C+=0.3cm{
& \textnormal{Sol}(\text{PDE}) 
\ar@{.>}[rdd]_{\sim}
 &\\
&                                    & \\
\textnormal{Hom}(\text{Verma}) 
 \ar@{->}[rr]^{\hspace{10pt}\sim}
\ar@{->}[ruu]_{\sim}
 & & 
\Diff_G(I(\sigma,\lambda), I(\varpi,\nu)).
}
\end{equation}
Then, in the F-method, one achieves the classification and construction of
intertwining differential operators simultaneously by solving the system of PDEs. 
We shall explain more details in Section \ref{sec:Fmethod}.

\vskip 0.1in

\subsection{The counterpart of generalized Verma modules}

As observed above, one can obtain $(\fg,P)$-homomorphisms in 
``$\textnormal{Hom}(\text{Verma})$'' from the solutions in 
``$\textnormal{Sol}(\text{PDE})$''. We thus study the algebraic counterpart
of Problem \ref{prob:A} for $(\fg,P)=(\f{sl}(n,\C), P_{1,n-1})$. 
It is achieved in Theorems \ref{thm:Hom1} and \ref{thm:Hom}.
We further consider a variant of Problem \ref{prob:A} for $\fg$-homomorphisms
between generalized Verma modules in Theorem \ref{thm:Hom2}.
We also classify in Corollary \ref{cor:Red} the reducibility 
of the generalized Verma module in consideration; this recovers
 a result of \cite{BX21, He15, HKZ19} for the pair $(\fg,\fp)=(\f{sl}(n,\C),\fp_{1,n-1})$.

A $\fg$-homomorphism between generalized Verma modules is called 
a \emph{standard map} if it is induced from a $\fg$-homomorphism between
the corresponding (full) Verma modules;
otherwise, it is called a \emph{non-standard map} (\cite{Boe85, Lepowsky77}).
In this paper we also show that the resulting $\fg$-homomorphisms in 
Theorem \ref{thm:Hom2} are all standard. 

\subsection{The $K$-type formulas for $\Ker(\D)_K$ and $\Im(\D)_K$}
\label{sec:Ktype}

Kable \cite{Kable11} and the authors \cite{KuOr19} recently showed 
a Peter--Weyl type theorem for the kernel $\Ker(\D)$ of
an intertwining differential operator $\D$.
The theorem allows us to compute  
the $K$-type formula of $\Ker(\D)$ explicitly by solving the hypergeometric/Heun differential equation (\cite{KuOr21+, KuOr19}).
Tamori \cite{Tamori21} independently used a similar idea 
to determine the $K$-type formula of $\Ker(\D)$ for his study of 
minimal representations.

The Peter--Weyl type theorem works nicely for 
first and second order differential operators; nonetheless,
it requires a certain amount of computations for higher order cases.
Since the differential operators that we obtained in Theorem \ref{thm:IDO} 
have arbitrary order, we take another approach in this paper.

In 1990, van Dijk--Molchanov \cite{vDM99} and Howe--Lee \cite{HL99}
independently showed among other things that the degenerate principal series representations in consideration have length two 
and have a unique finite-dimensional irreducible subrepresentation $F$.
(See M{\"o}llers--Schwarz \cite{MS14} for recent development of this matter.)
Since the $K$-type structures of the induced representations are also known, 
the determination of the $K$-type formula of $\Ker(\D)$ 
is equivalent to showing $\Ker(\D) \neq \{0\}$. As the length is two,
it simultaneously determines the $K$-type formula of $\Im(\D)$.

In order to show that $\Ker(\D) \neq \{0\}$, we show $\D f_{0}=0$ 
for a lowest weight vector $f_0 \in F$.
We shall discuss the details in Section \ref{sec:Sol}.
We remark that one can also show $\Ker(\D) \neq \{0\}$ by using
the lowest $K$-type in \cite{Kable12C}  for $n=3$.

\subsection{Organization of the paper}

Now we outline the rest of this paper.
There are seven sections including the introduction.
In Section \ref{sec:Fmethod}, we review a general idea of the F-method.
In particular, a recipe of the F-method will be given in Section \ref{sec:recipe}.
Since we do not find a thorough exposition 
of the F-method in the English literature 
(except the original papers of Kobayashi with his collaborators 
\cite{KP1, KP2, KOSS15}),
we decided to give some detailed account.
We hope that it will be helpful for a wide range of readers.
It is remarked that part of the section is an English translation of the Japanese
article \cite{Kubo22} of the first author.

Section \ref{sec:SLn} is for the specialization of the framework discussed in
Section \ref{sec:Fmethod} to the case 
$(G,P)=(SL(n,\R), P_{1,n-1})$.
In this section we fix some notation and normalizations for the rest of the sections.
Then, in Section \ref{sec:Dphi},  
we give our main results for Problem \ref{prob:A}
for the triple $(G,P;\sigma)=(SL(n,\R), P_{1,n-1};(\pm, \triv))$. These are accomplished
in Theorems \ref{thm:IDO1} and \ref{thm:IDO}. Further, a variant of
Problem \ref{prob:A} for 
$(\fg, P)$-homomorphisms between generalized Verma modules
is also discussed in this section. These are stated in Theorems \ref{thm:Hom1} and \ref{thm:Hom}. 
We give proofs of these theorems in Section \ref{sec:proof} by following the recipe of the F-method.

We consider Problem \ref{prob:B} in Section \ref{sec:Sol}. In this section we classify 
the $K$-type formulas of the kernel $\Ker(\D)$ and the image $\Im(\D)$ of the 
intertwining differential operators $\D$ classified in Section \ref{sec:Dphi}.
These are obtained in Corollary \ref{cor:IDO4}.

Section \ref{sec:Std} is an appendix discussing the standardness of 
the $\fg$-homomorphisms $\varphi$ obtained in Section \ref{sec:Dphi} 
between generalized Verma modules. We first quickly review the definition of the 
standard map. Then, by applying a version of Boe's criterion, we show that  
the homomorphisms $\varphi$ are all standard maps. This is achieved in 
Theorem \ref{thm:std}.

\section{Quick review on the F-method}
\label{sec:Fmethod}

The aim of this section is to review the F-method.
In particular, a recipe of the F-method is given in Section \ref{sec:recipe}.
In Section \ref{sec:proof}, we shall follow the recipe to 
classify and construct intertwining differential operators $\D$.
We mainly follow the arguments in \cite{Kobayashi18} and  \cite{KP1} in this section.

\subsection{General framework}\label{sec:prelim}

Let $G$ be a real reductive Lie group and $P=MAN_+ $ a Langlands decomposition of a parabolic subgroup $P$ of $G$. We denote by $\fg(\R)$ and 
$\fp(\R) = \fm(\R) \oplus \fa(\R) \oplus \fn_+(\R)$ the Lie algebras of $G$ and 
$P=MAN_+$, respectively.

 For a real Lie algebra $\f{y}(\R)$, we write $\f{y}$
and $\Cal{U}(\fy)$ for its complexification and the universal enveloping algebra of 
$\fy$, respectively. 
For instance, $\fg, \fp, \fm, \fa$, and $\fn_+$ are the complexifications of $\fg(\R), \fp(\R), \fm(\R), \fa(\R)$, and $\fn_+(\R)$, 
respectively.

For $\lambda \in \fa^* \simeq \Hom_\R(\fa(\R),\C)$,
we denote by $\C_\lambda$ 
the one-dimensional representation of $A$ defined by 
$a\mapsto a^\lambda:=e^{\lambda(\log a)}$. 
For a finite-dimensional irreducible 
representation $(\sigma, V)$ of $M$ and $\lambda \in \fa^*$,
we denote by $\sigma_\lambda$ the outer tensor representation $\sigma \boxtimes \C_\lambda$. As a representation on $V$, we define $\sigma_\lambda \colon
ma \mapsto a^\lambda\sigma(m)$. By letting $N_+$ act trivially, we regard 
$\sigma_\lambda$ as a representation of $P$. Let $\Cal{V}:=G \times_P V \to G/P$
be the $G$-equivariant vector bundle over the real flag variety $G/P$
 associated with the 
representation $(\sigma_\lambda, V)$ of $P$. We identify the Fr{\' e}chet space 
$C^\infty(G/P, \Cal{V})$ of smooth sections with 
\begin{equation*}
C^\infty(G, V)^P:=\{f \in C^\infty(G,V) : 
f(gp) = \sigma_\lambda^{-1}(p)f(g)
\;\;
\text{for any $p \in P$}\},
\end{equation*}
the space of $P$-invariant smooth functions on $G$.
Then, via the left regular representation $L$ of $G$ on $C^\infty(G)$,
we realize the parabolically induced representation 
$\pi_{(\sigma, \lambda)} = \Ind_{P}^G(\sigma_\lambda)$ on $C^\infty(G/P, \Cal{V})$.
We denote by $R$ the right regular representation of $G$ on $C^\infty(G)$.

Similarly, for a finite-dimensional irreducible
representation $(\eta_\nu, W)$ of $MA$, we define
an induced representation $\pi_{(\eta, \nu)}=\Ind_P^G(\eta_\nu)$ on the 
space $C^\infty(G/P, \Cal{W})$ of smooth sections for a $G$-equivariant vector bundle 
$\Cal{W}:=G\times_PW \to G/P$.
We write $\Diff_G(\Cal{V},\Cal{W})$ for the space of intertwining differential operators
$\D \colon C^\infty(G/P, \Cal{V}) \to C^\infty(G/P, \Cal{W})$.

Let $\mathfrak{g}(\R)=\mathfrak{n}_-(\R) \oplus \mathfrak{m}(\R) 
\oplus \mathfrak{a}(\R) \oplus \mathfrak{n}_+(\R)$ be the 
Gelfand--Naimark decomposition of $\mathfrak{g}(\R)$,
and write $N_- = \exp(\fn_-(\R))$. We identify $N_-$ with the 
open Bruhat cell $N_-P$ of $G/P$ via the embedding 
$\iota\colon N_- \hookrightarrow G/P$, $\bar{n} \mapsto \bar{n}P$.
Via the restriction of the vector bundle $\Cal{V} \to G/P$ to the open Bruhat cell
$N_-\stackrel{\iota}{\hookrightarrow} G/P$,
we regard $C^\infty(G/P,\mathcal{V})$ as a subspace of 
$C^\infty(N_-) \otimes V$.

We view intertwining differential operators 
$\D \colon
C^\infty(G/P,\mathcal{V})
\to C^\infty(G/P,\mathcal{W})$
as differential operators
$\D' \colon C^\infty(N_-) \otimes V
\to C^\infty(N_-) \otimes W$ such that
the restriction $\D'\vert_{C^\infty(G/P,\mathcal{V})}$
to $C^\infty(G/P,\mathcal{V})$ is a map
$\D'\vert_{C^\infty(G/P,\mathcal{V})}\colon
C^\infty(G/P,\mathcal{V})
\to C^\infty(G/P,\mathcal{W})$ (see \eqref{eqn:21} below).
\begin{equation}\label{eqn:21}
\xymatrix{
C^\infty(N_-) \otimes V 
\ar[r]^{\mathcal{D}' } 
& C^\infty(N_-) \otimes W\\ 
C^\infty(G/P,\mathcal{V}) 
\;\; \ar[r]_{\stackrel{\phantom{a}}{\hspace{20pt}\mathcal{D}=\mathcal{D}' \vert_{\small{C^\infty(G/P,\mathcal{V})}}} }
 \ar@{^{(}->}[u]^{\iota^*}
& \;\; C^\infty(G/P,\mathcal{W}) \ar@{^{(}->}[u]_{\iota^*}
}
\end{equation}

\noindent
In particular, we regard $\Diff_G(\Cal{V},\Cal{W})$ as 
\begin{equation}\label{eqn:DN}
\Diff_G(\Cal{V},\Cal{W}) \subset 
\Diff_\C(C^\infty(N_-)\otimes V, C^\infty(N_-)\otimes W).
\end{equation}

To describe the F-method, one needs to introduce the following:

\begin{enumerate}
\item infinitesimal representation $d\pi_{(\sigma,\lambda)}$ (Section \ref{sec:dpi1}),
\vskip 0.05in

\item duality theorem (Section \ref{sec:duality}),
\vskip 0.05in

\item algebraic Fourier transform $\widehat{\;\;\cdot\;\;}$ of Weyl algebras
(Section \ref{sec:Weyl}),
\vskip 0.05in

\item Fourier transformed representation 
$\widehat{d\pi_{(\sigma,\lambda)^*}}$ (Section \ref{sec:dpi2}),
\vskip 0.05in

\item algebraic Fourier transform $F_c$ of generalized Verma modules
(Section \ref{sec:Fc}).

\end{enumerate}


\noindent
After reviewing these objects, we shall discuss the F-method 
in Section \ref{sec:Fmethod2}.

\subsection{Infinitesimal representation $d\pi_{(\sigma,\lambda)}$}
\label{sec:dpi1}

We start with the representation $d\pi_{(\sigma, \lambda)}$ of $\fg$
on $C^\infty(N_-)\otimes V$ derived from the induced representation 
$\pi_{(\sigma, \lambda)}$ of $G$ on $C^\infty(G/P,\Cal{V})$.

For a representation $\eta$ of $G$, we denote by $d\eta$ 
the infinitesimal representation of $\fg(\R)$. 
For instance,  $dL$ and $dR$ denote the infinitesimal representations $\fg(\R)$ of
the left and right regular representations of $G$ on $C^\infty(G)$.
As usual, we naturally extend representations of $\fg(\R)$ 
to ones for its universal enveloping algebra $\Cal{U}(\fg)$ of its complexification $\fg$.
The same convention is applied for closed subgroups of $G$.

For $g \in N_-MAN_+$, we write
\begin{equation*}
g = p_-(g)p_0(g)p_+(g),
\end{equation*}
where $p_\pm(g) \in N_{\pm}$ and $p_0(g) \in MA$. 
Similarly, for $Y \in \fg = \fn_- \oplus \fl\oplus \fn_+$ with $\fl= \fm \oplus \fa$,
we write
\begin{equation*}
Y=Y_{\fn_-} + Y_{\fl} + Y_{\fn_+},
\end{equation*}
where $Y_{\fn_{\pm}} \in \fn_{\pm}$ and $Y_\fl \in \fl$.

Let $X\in \fg(\R)$.
Since $N_-MAN_+$ is an open dense subset of $G$,
we have $\exp(tX) \in N_-MAN_+$ for sufficiently 
small $t >0$. Thus, we have
\begin{align*}
X
&=\frac{d}{dt}\bigg\vert_{t=0}\exp(tX)\\
&=\frac{d}{dt}\bigg\vert_{t=0}p_-(\exp(tX))p_0(\exp(tX))p_+(\exp(tX))\\
&=
\frac{d}{dt}\bigg\vert_{t=0}p_-(\exp(tX))
+
\frac{d}{dt}\bigg\vert_{t=0}p_0(\exp(tX))
+
\frac{d}{dt}\bigg\vert_{t=0}p_+(\exp(tX)),
\end{align*}
which implies
\begin{alignat*}{3}
&\frac{d}{dt}\bigg\vert_{t=0}\exp(tX_{\fn_-})&&=X_{\fn_-}&&=
\frac{d}{dt}\bigg\vert_{t=0}p_-(\exp(tX)),\\
&\frac{d}{dt}\bigg\vert_{t=0}\exp(tX_{\fl})&&=X_{\fl}&&=
\frac{d}{dt}\bigg\vert_{t=0}p_0(\exp(tX)),\\
&\frac{d}{dt}\bigg\vert_{t=0}\exp(tX_{\fn_+})&&=X_{\fn_+}&&=
\frac{d}{dt}\bigg\vert_{t=0}p_+(\exp(tX)).
\end{alignat*}
Further, for $\bar{n} \in N_-$ and sufficiently small $t >0$, we have 
\begin{equation*}
\exp(-tX)\bar{n} = p_-(\exp(-tX)\bar{n})p_0(\exp(-tX)\bar{n})p_+(\exp(-tX)\bar{n}).
\end{equation*}
Observe that
\begin{align*}
&p_-(\exp(-tX)\bar{n})
=p_-(\bar{n}\exp(-t\Ad(\bar{n}^{-1})X))
=\bar{n}p_-(\exp(-t\Ad(\bar{n}^{-1})X)),\\
&p_0(\exp(-tX)\bar{n})
=p_0(\bar{n}\exp(-t\Ad(\bar{n}^{-1})X))
=p_0(\exp(-t\Ad(\bar{n}^{-1})X)).
\end{align*}

\noindent
Then, for $F \in C^\infty(G/P,\Cal{V})\simeq C^\infty(G,V)^P$, we have
\begin{align*}
F(\exp(-tX)\bar{n})
&=
F(p_-(\exp(-tX)\bar{n})p_0(\exp(-tX)\bar{n}))\\
&=F(\bar{n}p_-(\exp(-t\Ad(\bar{n}^{-1})X))p_0(\exp(-t\Ad(\bar{n}^{-1})X)))\\
&=
\sigma_{\lambda}(p_0(\exp(-t\Ad(\bar{n}^{-1})X))^{-1})
F(\bar{n}p_-(\exp(-t\Ad(\bar{n}^{-1})X))).
\end{align*}
\noindent
We remark that the $N_+$-invariance property that 
$F(gn) = F(g)$ for $n \in N_+$ is applied in the first line.
Since 
\begin{alignat*}{2}
&\frac{d}{dt}\bigg\vert_{t=0}p_0
(\exp(-t\Ad(\bar{n}^{-1})X))^{-1}&&=(\Ad(\bar{n}^{-1})X)_\fl,\\
&\frac{d}{dt}\bigg\vert_{t=0}p_-(\exp(-t\Ad(\bar{n}^{-1})X))&&=
\frac{d}{dt}\bigg\vert_{t=0}\exp(-t(\Ad(\bar{n}^{-1})X)_{\fn_-}),
\end{alignat*}
the representation $\dpi_{(\sigma,\lambda)}(X)$ on
the image $\iota^*(C^\infty(G/P,\Cal{V}))$ of the inclusion 
$\iota^*\colon 
C^\infty(G/P, \Cal{V}) \hookrightarrow C^\infty(N_-)\otimes V$
 is given by
\begin{align}\label{eqn:dpi1}
\dpi_{(\sigma,\lambda)}(X)F(\bar{n})
&=\frac{d}{dt}\bigg\vert_{t=0}F(\exp(-tX)\bar{n}) \nonumber\\
&=\frac{d}{dt}\bigg\vert_{t=0}\sigma_{\lambda}(p_0(\exp(-t\Ad(\bar{n}^{-1})X))^{-1})
F(\bar{n}p_-(\exp(-t\Ad(\bar{n}^{-1})X))) \nonumber\\
&=
\frac{d}{dt}\bigg\vert_{t=0}
\sigma_{\lambda}(p_0(\exp(-t\Ad(\bar{n}^{-1})X))^{-1})F(\bar{n})
+
\frac{d}{dt}\bigg\vert_{t=0}
F(\bar{n}p_-(\exp(-t\Ad(\bar{n}^{-1})X))) \nonumber\\[3pt]
&=d\sigma_\lambda((\Ad(\bar{n}^{-1})X)_\fl)F(\bar{n})
-\left(dR((\Ad(\cdot^{-1})X)_{\fn_-})F\right)(\bar{n}).
\end{align}

Equation \eqref{eqn:dpi1} shows that,  for $X \in \fg(\R)$, 
the formula $\dpi_{(\sigma,\lambda)}(X)$ on $\iota^*(C^\infty(G/P,\Cal{V}))$ 
can be extended to the whole space $C^\infty(N_-)\otimes V$. 
By extending the formula complex linearly to $\fg$, we have
\begin{equation}\label{eqn:dpi2}
d\pi_{(\sigma,\lambda)}(X)f(\bar{n})
=d\sigma_\lambda((\Ad(\bar{n}^{-1})X)_\fl)f(\bar{n})
-\left(dR((\Ad(\cdot^{-1})X)_{\fn_-})f\right)(\bar{n})
\end{equation}
for $X \in \fg$ and $f(\bar{n}) \in C^\infty(N_-)\otimes V$.

\subsection{Duality theorem}\label{sec:duality}

For a finite-dimensional irreducible representation $(\sigma_\lambda,V)$ of $MA$, 
we write $V^\vee = \Hom_\C(V,\C)$ and $((\sigma_\lambda)^\vee, V^\vee)$ for
the contragredient representation of $(\sigma_\lambda,V)$. By letting $\fn_+$ act 
on $V^\vee$ trivially, we regard the infinitesimal representation 
$d\sigma^\vee \boxtimes \C_{-\lambda}$ of $(\sigma_\lambda)^\vee$ as a $\fp$-module. The induced module
\begin{equation*}
M_\fp(V^\vee) := \Cal{U}(\fg)\otimes_{\Cal{U}(\fp)}V^\vee
\end{equation*}
is called a generalized Verma module.
Via the diagonal action of $P$ on $M_\fp(V^\vee)$, we regard
$M_\fp(V^\vee)$ as a $(\fg, P)$-module.

The following theorem is often called the \emph{duality theorem}. For the proof, see, 
for instance, \cite{CS90, KP1, KR00}.

\begin{thm}[duality theorem]\label{thm:duality}
There is a natural linear isomorphism
\begin{equation}\label{eqn:duality1}
\EuScript{D}_{H\to D}
\colon
\operatorname{Hom}_{P}(W^\vee,M_{\fp}(V^\vee))
\stackrel{\sim}{\To} 
\operatorname{Diff}_{G}(\mathcal V, \mathcal W),
\end{equation}
where,
for $\varphi \in \Hom_P(W^\vee, M_\fp(V^\vee))$ and
$F \in C^\infty(G/P,\Cal{V})\simeq C^\infty(G,V)^P$,
the element $\EuScript{D}_{H\to D}(\varphi)F \in C^\infty(G/P,\Cal{W})\simeq C^\infty(G,W)^P$
is given by 
\begin{equation}\label{eqn:HD}
\IP{\EuScript{D}_{H\to D}(\varphi)F}{w^\vee} =
\sum_{j}\IP{dR(u_j)F}{v_j^\vee}
\;\; \text{for $w^\vee \in W^\vee$},
\end{equation}
where $\varphi(w^\vee)=\sum_j u_j\otimes v_j^\vee \in 
M_\fp(V^\vee)$.
\end{thm}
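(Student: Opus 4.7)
The plan is to prove the isomorphism by constructing a candidate inverse and checking that both compositions are the identity, with the core technical input being the PBW decomposition $\Cal{U}(\fg) \cong \Cal{U}(\fn_-) \otimes \Cal{U}(\fp)$ applied to $M_\fp(V^\vee)$, which identifies $M_\fp(V^\vee)$ with $\Cal{U}(\fn_-) \otimes V^\vee$ as a vector space (though not as a $P$-module). Under this identification, the formula \eqref{eqn:HD} becomes a left-invariant, finite-order differential operator on $G$ with values in $W$, and the whole question reduces to matching the equivariance conditions on both sides.

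First I would verify that the formula \eqref{eqn:HD} really does land in $C^\infty(G/P,\Cal W)$, i.e.\ that $\EuScript{D}_{H\to D}(\varphi)F$ satisfies the $P$-invariance property $F'(gp)=\tau_\nu(p)^{-1}F'(g)$. The key identity is the standard transformation rule
\begin{equation*}
(dR(X)F)(gp) = \sigma_\lambda(p)^{-1}\,(dR(\Ad(p)X)F)(g)
\qquad \text{for } F \in C^\infty(G,V)^P,
\end{equation*}
extended multiplicatively to $X \in \Cal{U}(\fg)$. Applying this to $\varphi(w^\vee)=\sum_j u_j\otimes v_j^\vee$ and using the $P$-equivariance of $\varphi$ in the form $\varphi(\tau_\nu^\vee(p)w^\vee) = \sum_j \Ad(p)u_j \otimes \sigma_\lambda^\vee(p) v_j^\vee$ produces exactly the required transformation. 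That \eqref{eqn:HD} is $G$-intertwining is then immediate from the commutativity of $dL$ and $dR$, and that it is a differential operator (i.e.\ local) follows by choosing PBW representatives of $\varphi(w^\vee)$ inside $\Cal{U}(\fn_-)\otimes V^\vee$: in Bruhat coordinates on $N_-$, $dR$ restricted to $\Cal{U}(\fn_-)$ becomes a system of left-invariant vector fields, so one obtains a genuine finite-order differential operator on the open cell, which then extends uniquely by $G$-equivariance.

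For injectivity, suppose $\EuScript{D}_{H\to D}(\varphi) = 0$. Fix $w^\vee$ and lift $\varphi(w^\vee)$ to its unique PBW representative $\sum_\alpha u_\alpha \otimes v_\alpha^\vee \in \Cal{U}(\fn_-)\otimes V^\vee$ using a monomial basis of $\Cal{U}(\fn_-)$. Testing the vanishing against smooth functions $F$ whose restriction to $N_-\simeq N_-P/P$ has prescribed Taylor coefficients at $e$ forces all the $v_\alpha^\vee$ to vanish, hence $\varphi(w^\vee)=0$. For surjectivity I would invert the procedure: given $\D \in \Diff_G(\Cal V,\Cal W)$, express its action on the Bruhat cell as in \eqref{eqn:21}, evaluate at $\bar n = e$, and read off $\varphi(w^\vee) \in \Cal{U}(\fn_-)\otimes V^\vee$ from the coefficients of the partial derivatives in $\fn_-$-directions that compute $\langle \D F(e),w^\vee\rangle$. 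This is well defined because $\D$ is a finite-order differential operator, and the identity $\EuScript{D}_{H\to D}(\varphi) = \D$ holds at $e$ by construction and everywhere by $G$-equivariance.

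The main obstacle I anticipate is the verification of $P$-equivariance of the $\varphi$ constructed from $\D$ in the surjectivity step: one must check that the element obtained by reading off $\fn_-$-jets at $e$ actually transforms correctly under the diagonal $P$-action on $M_\fp(V^\vee)$. The cleanest way is to use that the $P$-action on $\Cal{U}(\fn_-)\otimes V^\vee$ induced from its identification with $M_\fp(V^\vee)$ combines $\Ad(p)$ on $\Cal{U}(\fn_-)$ with correction terms coming from the relation $up\otimes v^\vee = u\otimes \sigma_\lambda^\vee(p) v^\vee$; translating the $P$-intertwining property of $\D$ under right translation by $P$ through these relations then matches exactly the diagonal $P$-action, yielding $\varphi \in \Hom_P(W^\vee, M_\fp(V^\vee))$ and completing the proof.
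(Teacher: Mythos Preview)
The paper does not actually prove Theorem~\ref{thm:duality}; it simply states the result and refers to \cite{CS90, KP1, KR00} for the proof. Your proposal is a reasonable sketch of the standard argument and is consistent with what one finds in those references, so there is nothing to compare against here.
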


\begin{rem}
By the Frobenius reciprocity, \eqref{eqn:duality1} is equivalent to
\begin{equation}\label{eqn:duality2}
\EuScript{D}_{H\to D}
\colon
\operatorname{Hom}_{\fg, P}(M_\fp(W^\vee),M_{\fp}(V^\vee))
\stackrel{\sim}{\To} 
\operatorname{Diff}_{G}(\mathcal V, \mathcal W).
\end{equation}

\end{rem}

\begin{rem}\label{rem:Hom}
In general $P$ is not connected. 
If it is connected, then 
\begin{equation*}
\operatorname{Hom}_{\fg,P}(M_\fp(W^\vee),M_{\fp}(V^\vee))
=\operatorname{Hom}_{\fg}(M_\fp(W^\vee),M_{\fp}(V^\vee)).
\end{equation*}
Thus, in the case, the isomorphism \eqref{eqn:duality2} is equivalent to
\begin{equation}\label{eqn:duality3}
\EuScript{D}_{H\to D}\colon
\operatorname{Hom}_{\fg}(M_\fp(W^\vee),M_{\fp}(V^\vee))
\stackrel{\sim}{\To} 
\operatorname{Diff}_{G}(\mathcal V, \mathcal W).
\end{equation}
\end{rem}

\subsection{Algebraic Fourier transform $\widehat{\;\;\cdot\;\;}$ of Weyl algebras}
\label{sec:Weyl}

Let $U$ be a complex finite-dimensional vector space with $\dim _\C U=n$.
Fix a basis $u_1,\ldots, u_n$ of $U$ and let 
$(z_1, \ldots, z_n)$ denote the coordinates of $U$ with respect to the basis.
Then the algebra
\begin{equation*}
\C[U;z, \tfrac{\partial}{\partial z}]:=
\C[z_1, \ldots, z_n, 
\tfrac{\partial}{\partial z_1}, \ldots, \tfrac{\partial}{\partial z_n}]
\end{equation*}
with relations $z_iz_j = z_jz_i$, 
$\frac{\partial}{\partial z_i}\frac{\partial}{\partial z_j} 
=\frac{\partial}{\partial z_j}\frac{\partial}{\partial z_i}$,
and $\frac{\partial}{\partial z_j} z_i=\delta_{i,j} + z_i \frac{\partial}{\partial z_j}$
is called the Weyl algebra of $U$, where $\delta_{i,j}$ is the Kronecker delta.
Similarly,
let $(\zeta_1,\ldots, \zeta_n)$ denote the coordinates of 
the dual space $U^\vee$ of $U$ with respect to the dual basis of 
$u_1,\ldots, u_n$. We write 
$\C[U^\vee;\zeta, \tfrac{\partial}{\partial \zeta}]$
for the Weyl algebra of $U^\vee$.
Then the map determined by
\begin{equation}\label{eqn:Weyl}
\widehat{\frac{\partial}{\partial z_i}}:= -\zeta_i, \quad
\widehat{z_i}:=\frac{\partial}{\partial \zeta_i}
\end{equation}
gives a Weyl algebra isomorphism 
\begin{equation}\label{eqn:Weyl2}
\widehat{\;\;\cdot\;\;}\;\colon\C[U;z, \tfrac{\partial}{\partial z}] 
\stackrel{\sim}{\To} 
\C[U^\vee;\zeta, \tfrac{\partial}{\partial \zeta}], 
\quad T \mapsto \widehat{T}.
\end{equation}
\vskip 0.1in
\noindent
The map \eqref{eqn:Weyl2} is called
the \emph{algebraic Fourier transform of Weyl algebras}
(\cite[Def.\ 3.1]{KP1}).
We remark that the minus sign for $ -\zeta_i$ in \eqref{eqn:Weyl} is put in such a way that the resulting map in \eqref{eqn:Weyl2} is indeed a Weyl algebra isomorphism.
We remark that, for the Euler homogeneity operators 
$E_z = \sum_{i=1}^n z_i\frac{\partial}{\partial z_i}$ for $z$ 
and 
$E_\zeta = \sum_{i=1}^n \zeta_i\frac{\partial}{\partial \zeta_i}$ for $\zeta$,
we have $\widehat{E_z} = -(n+E_\zeta)$.

The Weyl algebra $\C[U;z, \tfrac{\partial}{\partial z}]$
naturally acts on the space $\Cal{D}'(U)$ of distributions on $U$.
In particular, the action of $\C[U;z, \tfrac{\partial}{\partial z}]$ preserves
the subspace $\Cal{D}'_{0}(U)$ of distributions supported at $0$.
Let $\Cal{J}$ be the annihilator of the Dirac delta function $\delta_0$, that is,
the kernel of the homomorphism
\begin{equation*}
\Psi\colon
\C[U;z, \tfrac{\partial}{\partial z}]
\To
\Cal{D}'_{0}(U), \quad P\mapsto P\delta_0.
\end{equation*}
Then $\Cal{J}$ is 
the left ideal of $\C[U;z, \tfrac{\partial}{\partial z}]$
generated by the coordinate functions $z_1,\ldots,z_n$.
Since the map $\Psi$ is surjective
(see, for instance, \cite[Thm.\ 5.5]{vanDijk13}),
it induces the isomorphism
\begin{equation}\label{eqn:DW}
\bar{\Psi}\colon
\C[U;z, \tfrac{\partial}{\partial z}]/\Cal{J}
 \stackrel{\sim}{\To} 
\Cal{D}'_{0}(U).
\end{equation}
On the other hand,
by applying the algebraic Fourier transform $\widehat{\;\;\cdot\;\;}$ 
in \eqref{eqn:Weyl2}
to $\C[U;z, \tfrac{\partial}{\partial z}]/\Cal{J}$,
the space $\C[U;z, \tfrac{\partial}{\partial z}]/\Cal{J}$
is isomorphic to the space $\Pol(U^\vee)=\C[U^\vee;\zeta]$ of polynomials on 
$U^\vee$. Thus, we have
\begin{equation}\label{eqn:DP}
\Cal{D}'_{0}(U) 
\stackrel[\sim]{\bar{\Psi}^{-1}}{\To} 
\C[U;z, \tfrac{\partial}{\partial z}]/\Cal{J}
\stackrel[\sim]{\widehat{\;\;\cdot\;\;}}{\To} 
\Pol(U^\vee).
\end{equation}
It is remarked that
the composition $\widehat{\;\;\cdot\;\;} \circ \bar{\Psi}^{-1}$ maps
$\Cal{D}'_{0}(U) \ni \delta_0 \mapsto 1 \in \Pol(U^\vee)$.

\subsection{Fourier transformed representation $\widehat{d\pi_{(\sigma,\lambda)^*}}$}
\label{sec:dpi2}

For $2\rho\equiv 2\rho(\fn_+)= \mathrm{Trace}(\ad\vert_{\fn_+})\in \mathfrak{a}^*$,
we denote by  $\C_{2\rho}$ the one-dimensional representation of $P$
defined by
$p \mapsto \chi_{2\rho}(p)=
\abs{\mathrm{det}(\mathrm{Ad}(p)\colon 
\mathfrak{n}_+ \to \mathfrak{n}_+)}$.
For the contragredient representation
$((\sigma_\lambda)^\vee, V^\vee)$ of $(\sigma_\lambda,V)$,
we put
$\sigma^*_\lambda := \sigma^\vee \boxtimes \C_{2\rho-\lambda}$.
As for $\sigma_\lambda$, we regard $\sigma^*_\lambda$ as a representation of $P$.
Define the induced representation
$\pi_{(\sigma, \lambda)^*} = \mathrm{Ind}_P^G(\sigma^*_\lambda)$
on the space $C^\infty(G/P,\mathcal{V}^*)$ of smooth sections 
for the vector bundle $\mathcal{V}^*=G\times_P (V^\vee\otimes \C_{2\rho})$
associated with $\sigma^*_\lambda$, which is isomorphic to the tensor bundle
of the dual vector bundle $\mathcal{V}^\vee = G\times_PV^\vee$ and the bundle
of densities over $G/P$.
Then the integration on $G/P$ gives a 
$G$-invariant non-degenerate bilinear form
$
\mathrm{Ind}^G_P(\sigma_\lambda) 
\times \mathrm{Ind}^G_P(\sigma^*_\lambda) \to \C
$
for $\mathrm{Ind}^G_P(\sigma_\lambda)$ and 
$\mathrm{Ind}^G_P(\sigma^*_\lambda)$.

As for $C^\infty(G/P,\mathcal{V})$,
the space $C^\infty(G/P,\mathcal{V}^*)$ can be regarded as 
a subspace of $C^\infty(N_-) \otimes V^\vee$.
Then, by replacing $\sigma_\lambda$ with $\sigma_\lambda^*$ in 
\eqref{eqn:dpi2}, we have
\begin{equation}\label{eqn:dpi3}
d\pi_{(\sigma,\lambda)^*}(X)f(\bar{n})
=d\sigma_\lambda^*((\Ad(\bar{n}^{-1})X)_\fl)f(\bar{n})
-\left(dR((\Ad(\cdot^{-1})X)_{\fn_-})f\right)(\bar{n})
\end{equation}
for $X \in \fg$ and $f(\bar{n}) \in C^\infty(N_-)\otimes V^\vee$.
Via the exponential map $\exp\colon \fn_-(\R) \simeq N_-$, one can regard
$\dpi_{(\sigma,\lambda)^*}(X)$ as a representation 
on $C^\infty(\mathfrak{n}_-(\R)) \otimes V^\vee$.
It then follows from \eqref{eqn:dpi3} that 
$\dpi_{(\sigma,\lambda)^*}$ gives a Lie algebra homomorphism
\begin{equation*}
d\pi_{(\sigma,\lambda)^*}\colon \mathfrak{g} \To 
\C[\fn_-(\R);x, \tfrac{\partial}{\partial x}]  \otimes \mathrm{End}(V^\vee),
\end{equation*}
where $(x_1, \ldots, x_n)$ are coordinates of $\fn_-(\R)$ with $n=\dim \fn_-(\R)$.
We extend the coordinate functions $x_1,\ldots, x_n$ for $\fn_-(\R)$ 
holomorphically to the ones $z_1, \ldots, z_n$ for $\fn_-$.
Thus we have 
\begin{equation*}
d\pi_{(\sigma,\lambda)^*}\colon \mathfrak{g} \To 
\C[\fn_-;z, \tfrac{\partial}{\partial z}]  \otimes \mathrm{End}(V^\vee).
\end{equation*}

Now we fix a non-degenerate symmetric bilinear form $\kappa$ on
$\fn_+$ and $\fn_-$. Via $\kappa$, we identify 
$\fn_+$  with the dual space $\fn_-^\vee$ of 
$\fn_-$. Then
the algebraic Fourier transform $\widehat{\;\;\cdot\;\;}$
of Weyl algebras \eqref{eqn:Weyl2} gives a Weyl algebra isomorphism
\begin{equation*}
\widehat{\;\;\cdot\;\;}\;\colon
\C[\fn_-;z, \tfrac{\partial}{\partial z}] 
\stackrel{\sim}{\To} 
\C[\fn_+;\zeta, \tfrac{\partial}{\partial \zeta}].
\end{equation*}
In particular, it gives a Lie algebra homomorphism
\begin{equation}\label{eqn:hdpi}
\widehat{d\pi_{(\sigma,\lambda)^*}}\colon \mathfrak{g} \To 
\C[\fn_+;\zeta, \tfrac{\partial}{\partial \zeta}]\otimes \mathrm{End}(V^\vee).
\end{equation}

\subsection{Algebraic Fourier transform $F_c$ of generalized Verma modules}
\label{sec:Fc}

Our aim is to construct a $(\fg, P)$-module isomorpshism
\begin{equation*}
M_\fp(V^\vee)
\stackrel{\sim}{\To} 
\Pol(\fn_+)\otimes V^{\vee},
\end{equation*}
where $\Pol(\fn_+)$ denotes the space 
$\C[\fn_+;\zeta]$ of polynomial functions on $\fn_+$.
It is classically known that the map
$u\otimes v^\vee \mapsto 
 dL(u)(\delta_{[o]} \otimes v^\vee)$ gives
a $(\fg, P)$-module isomorphism
\begin{equation}\label{eqn:VD}
M_\fp(V^\vee)
\stackrel{\sim}{\To} 
 \Cal{D}'_{[o]}(G/P,\Cal{V}^*),
 \quad
 u\otimes v^\vee \mapsto 
 dL(u)(\delta_{[o]} \otimes v^\vee),
\end{equation}
where $[o]$ denotes the identity coset $[o] = eP$ of $G/P$
and $\delta_{[o]}$ denotes the Dirac delta function supported at $[o]$
(see, for instance, \cite{KP1,Kostant75}).
It thus suffices to construct a $(\fg, P)$-module isomorphism
\begin{equation*}
 \Cal{D}'_{[o]}(G/P,\Cal{V}^*)
\stackrel{\sim}{\To} 
\Pol(\fn_+)\otimes V^{\vee}.
\end{equation*}

First, observe that the restriction of the dual vector bundle
$\mathcal{V}^*\to G/P$ to the open Bruhat cell
$\iota\colon \mathfrak{n}_-(\R) \simeq N_-\hookrightarrow G/P$
gives an isomorphism
\begin{equation*}
\iota^*\colon
 \Cal{D}'_{[o]}(G/P,\Cal{V}^*) 
 \stackrel{\sim}{\To} 
 \Cal{D}'_{0}(\fn_-(\R),V^\vee).
\end{equation*}
Via the isomorphism $\iota^*$,
one can induce a $(\fg,P)$-module structure on
$\Cal{D}'_{0}(\fn_-(\R),V^\vee)$ from 
$ \Cal{D}'_{[o]}(G/P,\Cal{V}^*)$.
In particular, the Lie algebra $\fg$ acts 
on $ \Cal{D}'_{0}(\fn_-(\R),V^\vee)$ via 
the infinitesimal representation $d\pi_{(\sigma,\lambda)^*}$.
Consequently, by the isomorphism \eqref{eqn:DP}, one obtains
the following chain of $(\fg,P)$-isomorphisms:
\begin{alignat}{4}\label{eqn:VP}
M_\fp(V^\vee)
&\stackrel{\sim}{\To} \Cal{D}'_{[o]}(G/P,\Cal{V}^*) 
&&\stackrel{\sim}{\To} \Cal{D}'_{0}(\fn_-(\R),V^\vee)
&&\stackrel{\sim}{\To} \Pol(\fn_+)\otimes V^\vee\\
\rotatebox{90}{$\in$}\hspace{20pt}
&\hspace{50pt}\rotatebox{90}{$\in$}
&&\hspace{50pt}\rotatebox{90}{$\in$}
&&\hspace{55pt}\rotatebox{90}{$\in$}& \nonumber\\[-2pt]
u\otimes v^\vee \hspace{8pt}
&\mapsto dL(u)(\delta_{[o]} \otimes v^\vee) 
&&\mapsto d\pi_{(\sigma,\lambda)^*}(u)(\delta_0\otimes v^\vee) 
&&\mapsto \widehat{d\pi_{(\sigma,\lambda)^*}}(u)(1\otimes v^\vee). \nonumber
\end{alignat}

We denote by $F_c$ the composition of the three 
$(\mathfrak{g}, P)$-module isomorphisms in \eqref{eqn:VP}.

\begin{defn}[{\cite[Sect.\ 3.4]{KP1}}]\label{def:AFT2}
We call the $(\mathfrak{g}, P)$-module isomorphism
\begin{equation}\label{eqn:Fc}
F_c\colon M_\fp(V^\vee) 
\stackrel{\sim}{\To}
\Pol(\fn_+) \otimes V^\vee,
\quad u\otimes v^\vee \longmapsto 
\widehat{d\pi_{(\sigma,\lambda)^*}}(u)(1\otimes v^\vee)\\
\end{equation}
the \emph{algebraic Fourier transform of the generalized Verma module  $M_\fp(V^\vee)$}.
\end{defn}

\subsection{The F-method}
\label{sec:Fmethod2}

Observe that
the algebraic Fourier transform
$F_c$  in \eqref{eqn:Fc} 
gives an $MA$-representation isomorphism
\begin{equation}\label{eqn:VP2}
M_\fp(V^\vee)^{\fn_+}
\stackrel{\sim}{\To}
(\Pol(\fn_+) 
\otimes V^\vee)^{\widehat{d\pi_{(\sigma,\lambda)^*}}(\fn_+)},
\end{equation}
which induces a linear isomorphism
\begin{equation}\label{eqn:VP31}
\operatorname{Hom}_{MA}(W^\vee,
M_\fp(V^\vee)^{\fn_+})
\stackrel{\sim}{\To}
\operatorname{Hom}_{MA}
\left(W^\vee,
(\mathrm{Pol}(\mathfrak{n}_+) 
\otimes V^\vee)^{\widehat{d\pi_{(\sigma,\lambda)^*}}(\fn_+)}\right).
\end{equation}
Here $MA$ acts on $\Pol(\fn_+)$ via the action
\begin{equation}\label{eqn:sharp}
\Ad_{\#}(l) \colon p(X) \mapsto p(\Ad(l^{-1})X)
\;\; \text{for $l \in MA$}.
\end{equation}

Recall from \eqref{eqn:hdpi} that we have
\begin{equation*}
\widehat{d\pi_{(\sigma,\lambda)^*}}(\fn_+)\subset
\C[\fn_+;\zeta, \tfrac{\partial}{\partial \zeta}]\otimes \mathrm{End}(V^\vee).
\end{equation*}
Since the nilpotent radical 
$\fn_+$ acts trivially on $1\otimes v^\vee \in M_\fp(V^\vee)$,
and since the algebraic Fourier transform $F_c$ is a $\fg$-module isomorphism,
the Fourier image $\widehat{d\pi_{(\sigma,\lambda)^*}}(\fn_+)$ must act on
$1\otimes v^\vee \in \Pol(\fn_+) \otimes V^\vee$ trivially.
Therefore,  the operator
$\widehat{d\pi_{(\sigma,\lambda)^*}}(C)$ for $C\in \fn_+$
is a differential operator of order at least 1;
in particular, the space
$(\mathrm{Pol}(\mathfrak{n}_+) 
\otimes V^\vee)^{\widehat{d\pi_{(\sigma,\lambda)^*}}(\fn_+)}$
is the space of polynomial solutions to the system of partial differential equations
defined by $\widehat{d\pi_{(\sigma,\lambda)^*}}(C)$ for $C \in \fn_+$.
It is remarked that the operators
$\widehat{d\pi_{(\sigma,\lambda)^*}}(C)$ are not vector fields;
for instance, if $\mathfrak{n}_+$ is abelian, then
$\widehat{d\pi_{(\sigma,\lambda)^*}}(C)$ are second order differential operators
(cf.\ Proposition \ref{prop:wdpi} and \cite[Prop.~3.10]{KP1}).

To emphasize that
$(\mathrm{Pol}(\mathfrak{n}_+) 
\otimes V^\vee)^{\widehat{d\pi_{(\sigma,\lambda)^*}}(\fn_+)}$
is the space of polynomial solutions to a system of differential equations,
we let
\begin{equation}\label{eqn:Sol2a}
\mathrm{Sol}(\mathfrak n_+;V,W)=
\operatorname{Hom}_{MA}(W^\vee,
(\mathrm{Pol}(\mathfrak{n}_+) 
\otimes V^\vee)^{\widehat{d\pi_{(\sigma,\lambda)^*}}(\fn_+)}).
\end{equation}
Via the identification
$\textrm{Hom}_{MA}(W^\vee, \Pol(\fn_+)\otimes V^\vee)
\simeq
\big((\Pol(\fn_+)\otimes V^\vee) \otimes W\big)^{MA}$,
we have
\begin{align}\label{eqn:Sol}
&\Sol(\fn_+;V,W)\nonumber \\[3pt]
&=\{ \psi \in \Hom_{MA}(W^\vee, \Pol(\fn_+)\otimes V^\vee): 
\text{
$\psi$ satisfies the system of PDEs \eqref{eqn:Fsys} below.}\}
\end{align}
\begin{equation}\label{eqn:Fsys}
(\widehat{d\pi_{(\sigma,\lambda)^*}}(C) \otimes \id_W)\psi=0
\,\,
\text{for all $C \in \fn_+$}.
\end{equation}
We call the system of PDEs \eqref{eqn:Fsys} the \emph{F-system}
(\cite[Fact 3.3 (3)]{KKP16}).
Since
\begin{equation*}
\operatorname{Hom}_{P}(W^\vee,M_\fp(V^\vee))
=
\operatorname{Hom}_{MA}(W^\vee,M_\fp(V^\vee)^{\fn_+}),
\end{equation*}
the isomorphism \eqref{eqn:VP31} together with \eqref{eqn:Sol2a} shows the following.

\begin{thm}
[F-method, {\cite[Thm.\ 4.1]{KP1}}]
\label{thm:Fmethod}
There exists a linear isomorphism
\begin{equation*}
F_c \otimes \mathrm{id}_{W}\colon 
\operatorname{Hom}_{P}(W^\vee,
M_\fp(V^\vee))
\stackrel{\sim}{\To}
\mathrm{Sol}(\mathfrak n_+;V,W).
\end{equation*}
Equivalently, we have
\begin{equation*}
\mathrm{Hom}_{\mathfrak g,P}(M_\fp(W^\vee),M_\fp(V^\vee))\stackrel{\sim}{\To}
\mathrm{Sol}(\mathfrak n_+;V,W).
\end{equation*}
\end{thm}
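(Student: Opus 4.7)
The plan is to assemble this theorem as a direct functorial consequence of the constructions in Sections~\ref{sec:Fc} and~\ref{sec:Fmethod2}: all of the substantial content sits in the algebraic Fourier transform $F_c$ and in verifying that it is a $(\mathfrak{g},P)$-module isomorphism, so what remains is essentially bookkeeping.

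First I would rewrite the left-hand side using the $P=MAN_+$ decomposition. Since $N_+$ acts trivially on $W^\vee$, a $P$-equivariant map $W^\vee\to M_\fp(V^\vee)$ is precisely an $MA$-equivariant map that lands in the $\fn_+$-invariants, giving
\[
\Hom_P(W^\vee, M_\fp(V^\vee)) = \Hom_{MA}(W^\vee,\, M_\fp(V^\vee)^{\fn_+}).
\]
Next I would transport this identification across $F_c$. By Definition~\ref{def:AFT2}, the map $F_c\colon M_\fp(V^\vee)\stackrel{\sim}{\To}\Pol(\fn_+)\otimes V^\vee$ is a $(\mathfrak{g},P)$-module isomorphism when the target carries the action $\widehat{d\pi_{(\sigma,\lambda)^*}}$. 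In particular it is both $\fn_+$-equivariant and $MA$-equivariant, hence restricts to an $MA$-isomorphism on $\fn_+$-invariants:
\[
M_\fp(V^\vee)^{\fn_+} \stackrel{\sim}{\To} (\Pol(\fn_+)\otimes V^\vee)^{\widehat{d\pi_{(\sigma,\lambda)^*}}(\fn_+)}.
\]
Applying the functor $\Hom_{MA}(W^\vee,-)$ and recalling the definition~\eqref{eqn:Sol2a} of $\Sol(\fn_+;V,W)$ then produces the claimed isomorphism $F_c\otimes\id_W$.

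For the equivalent reformulation in terms of $\Hom_{\mathfrak{g},P}(M_\fp(W^\vee),M_\fp(V^\vee))$, I would simply invoke Frobenius reciprocity, as already used to pass from~\eqref{eqn:duality1} to~\eqref{eqn:duality2} in Theorem~\ref{thm:duality}. The closest thing to an obstacle is the bookkeeping check that the $\fn_+$-invariance on the Fourier side is correctly encoded by the F-system~\eqref{eqn:Fsys}: a polynomial-valued map $\psi$ is annihilated by $\widehat{d\pi_{(\sigma,\lambda)^*}}(C)$ for every $C\in\fn_+$ if and only if it corresponds under $F_c$ to an $\fn_+$-invariant element. This is immediate from $F_c$ being $\fg$-equivariant together with the observation in Section~\ref{sec:Fmethod2} that each $\widehat{d\pi_{(\sigma,\lambda)^*}}(C)$ is a differential operator of order at least one, so its common kernel isolates the genuine $\fn_+$-invariants rather than an enlargement thereof.
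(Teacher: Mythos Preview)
Your proposal is correct and follows essentially the same argument as the paper: identify $\Hom_P(W^\vee,M_\fp(V^\vee))$ with $\Hom_{MA}(W^\vee,M_\fp(V^\vee)^{\fn_+})$, use that $F_c$ is a $(\fg,P)$-isomorphism to transport $\fn_+$-invariants to the Fourier side, and then apply $\Hom_{MA}(W^\vee,-)$. The final paragraph's ``bookkeeping check'' is a slight overcomplication---$\fn_+$-invariance on the Fourier side is by definition annihilation by $\widehat{d\pi_{(\sigma,\lambda)^*}}(\fn_+)$, so no separate verification via the order of the operators is needed.
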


\vskip 0.1in

The diagram \eqref{eqn:isom2} below is the refinement of 
\eqref{eqn:SHD2} in the introduction.
\begin{equation}\label{eqn:isom2}
\xymatrix@R-=0.7pc@C-=0.5cm{
{}
& 
\mathrm{Sol}(\mathfrak n_+;V,W)
\ar@{.>}_{\sim}[rddd]
&{} \\
&&&\\
&\\
\operatorname{Hom}_{\mathfrak g,P}(M_\fp(W^\vee),M_\fp(V^\vee)) 
  \ar[rr]^{\hspace{25pt}\sim}_{\hspace{25pt}\EuScript{D}_{H\to D}}
  \ar[ruuu]_{\sim}^{F_c\otimes\mathrm{id}_W} 
 & {}
 & 
 \operatorname{Diff}_{G}(\mathcal V, \mathcal W)
}
\end{equation}

\subsection{The case of abelian nilradical $\fn_+$}
\label{sec:abelian}

Now suppose that the nilpotent radical $\fn_+$ is abelian.
In this case intertwining differential operators 
$\D \in \Diff_G(\Cal{V},\Cal{W})$ have constant coefficients.
Indeed, observe that, as $\fn_+$ is abelian, so is $\fn_-(\R)$. 
Thus, we have
\begin{equation*}
\exp(X)\exp(Y)=\exp(X+Y) \quad \text{for all $X, Y \in \fn_-(\R)$}.
\end{equation*}
For a given ordered basis $(X_1, \ldots, X_n)$ with
$n = \dim \fn_-(\R)$, one may identify $C^\infty(N_-)$ with
$C^\infty(N_-) \simeq C^\infty(\R^n)$ via 
the exponential map
\begin{equation}\label{eqn:20240503a}
\R^n \stackrel{\sim}{\To} N_-, \quad (x_1, \ldots, x_{n}) 
\mapsto \exp(x_1 X_1 + \cdots +  x_{n} X_{n}).
\end{equation}
Then, for $f(x_1, \ldots, x_n) \in C^\infty(\R^n)$, we have
\begin{align*}
dR(X_j) f(x_1, \ldots, x_n) 
&=\frac{d}{dt}\bigg\vert_{t=0}
f\big(\exp(x_1 X_1 + \cdots +  x_{n} X_{n})
\exp(tX_j)\big)\\
&=\frac{d}{dt}\bigg\vert_{t=0}
f\big(\exp(x_1 X_1 + \cdots + (x_j + t)X_j +\cdots+ x_{n} X_{n})\big)\\
&=\frac{d}{dt}\bigg\vert_{t=0}
f(x_1, \ldots, x_{j-1}, x_j + t, x_{j+1}, \ldots, x_n)\\
&=\frac{\partial}{\partial x_j}f(x_1, \ldots, x_n).
\end{align*}
Since intertwining differential operators $\D$
are given by linear combinations of the differential operators
$dR(X_1)^{k_1}\cdots dR(X_n)^{k_n}$ over $\C$ by Theorem \ref{thm:duality}, 
this shows that the coefficients of $\D$ must be constant.

As in \eqref{eqn:DN}, one may view $\Diff_G(\Cal{V},\Cal{W})$ as
\begin{equation*}
\Diff_G(\Cal{V},\Cal{W}) \subset \C[\fn_-;\frac{\partial}{\partial z}]
 \otimes \mathrm{Hom}_{\C}(V,W).
\end{equation*}
Since $\fn_+$ is regarded as the dual space of $\fn_-$, 
one can define the symbol map
\begin{equation*}
\mathrm{symb}\colon
\C[\fn_-;\frac{\partial}{\partial z}]
\To\C[\fn_+;\zeta],
\quad 
\frac{\partial}{\partial z_i} \mapsto \zeta_i.
\end{equation*}
As $\Pol(\fn_+) = \C[\fn_+;\zeta]$, this shows that we have
\begin{align}\label{eqn:const2}
\mathrm{symb}\colon
\C[\fn_-;\frac{\partial}{\partial z}]
 \otimes \mathrm{Hom}_{\C}(V,W) 
\stackrel{\sim}{\To} &\;
\Pol(\fn_+)
\otimes \mathrm{Hom}_{\C}(V,W) \nonumber \\
\text{{\footnotesize{$\bigcup$}} \hspace{2.8cm}}& 
\hspace{1.48cm} 
\text{{\footnotesize{$\bigcup$}}} \\
\textnormal{Diff}_{G}(\mathcal{V}, \mathcal{W}) \hspace{2cm} 
&
\hspace{-0.75cm}\stackrel{\sim}{\To} 
\hspace{0.5cm} 
\mathrm{symb}(\textnormal{Diff}_{G}(\mathcal{V}, \mathcal{W}))
 \hspace{0.5cm}\nonumber
\end{align}
On the other hand, by \eqref{eqn:Sol2a}, we also have
\begin{equation*}
\mathrm{Sol}(\mathfrak n_+;V,W)
\simeq
\big(\mathrm{Pol}(\mathfrak{n}_+) 
\otimes \operatorname{Hom}_{\C}(V,W)\big)^{MA,\, \widehat{d\pi_{(\sigma,\lambda)^*}}(\fn_+)}.
\end{equation*}
Hence,
\begin{equation*}
\xymatrix@R=-1pt@C=-35pt{
& 
\mathrm{Pol}(\mathfrak{n}_+) \otimes \operatorname{Hom}_{\C}(V,W)
&\\
\rotatebox[origin=c]{-45}{$\bigcup$}
\hspace{5pt}
&
&
\rotatebox[origin=c]{45}{$\bigcup$}
\hspace{20pt}
\\
\mathrm{Sol}(\mathfrak n_+;V,W) \hspace{10pt}
&
&
\mathrm{symb}(\textnormal{Diff}_{G}(\mathcal{V}, \mathcal{W}))
}
\end{equation*}
\noindent
Theorem \ref{thm:abelian} below shows that, in fact, we have
\begin{equation*}
\mathrm{Sol}(\mathfrak n_+;V,W)
= 
\mathrm{symb}(\textnormal{Diff}_{G}(\mathcal{V}, \mathcal{W})).
\end{equation*}

\begin{thm}
[{\cite[Cor.\ 4.3]{KP1}}]
\label{thm:abelian}
Suppose that the nilpotent radical
$\mathfrak{n}_+$ is abelian. Then 
the symbol map $\mathrm{symb}$ gives
a linear isomorphism
\begin{equation*}
\mathrm{symb}^{-1}
\colon 
\mathrm{Sol}(\mathfrak n_+;V,W)
\stackrel{\sim}{\To} 
 \operatorname{Diff}_{G}(\mathcal V, \mathcal W).
\end{equation*}
Further, the diagram \eqref{eqn:isom3} below commutes.
\begin{equation}\label{eqn:isom3}
\xymatrix@R-=0.7pc@C-=0.5cm{
{}
& 
\mathrm{Sol}(\mathfrak n_+;V,W)
\ar[rddd]_{\sim}^{\;\; \mathrm{symb}^{-1}}
&{} \\
&&&\\
&\circlearrowleft{} \\
\operatorname{Hom}_{\mathfrak g,P}(M_\fp(W^\vee),M_\fp(V^\vee)) 
  \ar[rr]^{\hspace{25pt}\sim}_{\hspace{25pt}\EuScript{D}_{H\to D}}
  \ar[ruuu]_{\sim}^{F_c\otimes\mathrm{id}_W} 
 & {}
 & 
 \operatorname{Diff}_{G}(\mathcal V, \mathcal W)
}
\end{equation}
\end{thm}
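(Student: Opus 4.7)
The plan is to exhibit the triangle \eqref{eqn:isom3} as commutative, and then read off the isomorphism $\symb^{-1}$ as a composition of the two known isomorphisms on the other two sides. More precisely, Theorems \ref{thm:duality} and \ref{thm:Fmethod} already identify both $\Hom_{\fg,P}(M_\fp(W^\vee),M_\fp(V^\vee)) \stackrel{\sim}{\to} \Diff_G(\Cal V,\Cal W)$ via $\EuScript{D}_{H\to D}$ and $\Hom_{\fg,P}(M_\fp(W^\vee),M_\fp(V^\vee)) \stackrel{\sim}{\to} \Sol(\fn_+;V,W)$ via $F_c\otimes\id_W$. Hence it suffices to show that the composition
\begin{equation*}
\Phi \;:=\; \symb \circ \EuScript{D}_{H\to D} \circ (F_c\otimes\id_W)^{-1} \colon \Sol(\fn_+;V,W) \To \Pol(\fn_+)\otimes\Hom_\C(V,W)
\end{equation*}
is the natural inclusion; once this is known, $\symb$ maps $\Diff_G(\Cal V,\Cal W)$ bijectively onto $\Sol(\fn_+;V,W)$ and the diagram commutes by construction.

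The key calculation is to go around the triangle starting from an arbitrary $\varphi\in\Hom_{\fg,P}(M_\fp(W^\vee),M_\fp(V^\vee))$. By the Poincar\'e--Birkhoff--Witt theorem we may write $\varphi(w^\vee)=\sum_j u_j\otimes v_j^\vee$ with $u_j\in\Cal{U}(\fn_-)$, and since $\fn_-$ is abelian this is just $u_j\in S(\fn_-)$. On the F-method side, for $X\in\fn_-$ the formula \eqref{eqn:dpi3} gives $(\Ad(\bar n^{-1})X)_\fl=0$ and $(\Ad(\bar n^{-1})X)_{\fn_-}=X$, so $d\pi_{(\sigma,\lambda)^*}(X) = -dR(X)$; under the exponential coordinates \eqref{eqn:20240503a}, $dR(X_i)=\partial/\partial z_i$, hence $\widehat{d\pi_{(\sigma,\lambda)^*}}(X_i) = -\widehat{\partial/\partial z_i} = \zeta_i$ via \eqref{eqn:Weyl}. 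Extending multiplicatively, the restriction of $\widehat{d\pi_{(\sigma,\lambda)^*}}$ to $\Cal{U}(\fn_-)$ coincides with the canonical identification $S(\fn_-)\stackrel{\sim}{\to}\Pol(\fn_+)$ induced by $\kappa$; write $\tilde u_j$ for the image of $u_j$. Then $F_c(u_j\otimes v_j^\vee) = \tilde u_j\otimes v_j^\vee$. On the differential-operator side, since $\fn_-$ is abelian the argument in Section \ref{sec:abelian} shows $dR(u_j)=u_j(\partial/\partial z)$ is constant-coefficient, and $\symb(\partial/\partial z_i)=\zeta_i$ gives $\symb(dR(u_j))=\tilde u_j$. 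Combining with the defining pairing \eqref{eqn:HD} of $\EuScript{D}_{H\to D}$, the image $\symb(\EuScript{D}_{H\to D}(\varphi))$, paired against $w^\vee$, equals $\sum_j \tilde u_j\langle\,\cdot\,,v_j^\vee\rangle$, which is exactly the element of $\Pol(\fn_+)\otimes\Hom_\C(V,W)$ corresponding to $\sum_j\tilde u_j\otimes v_j^\vee = F_c(\varphi)(w^\vee)$ under the natural pairing. Therefore $\Phi$ is the inclusion.

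The principal obstacle is simply bookkeeping: the minus sign in \eqref{eqn:Weyl}, the identification $\fn_+\simeq\fn_-^\vee$ via $\kappa$, the PBW ordering placing the $\fn_-$ factor on the left, and the sign in $d\pi_{(\sigma,\lambda)^*}(X)=-dR(X)$ for $X\in\fn_-$ all have to combine correctly so that the two maps $S(\fn_-)\to\Pol(\fn_+)$ appearing on the F-method route and the symbol route are \emph{literally} the same map, not merely isomorphic ones. Once this compatibility is verified, the commutativity of \eqref{eqn:isom3} is immediate, and the asserted isomorphism $\symb^{-1}\colon\Sol(\fn_+;V,W)\stackrel{\sim}{\to}\Diff_G(\Cal V,\Cal W)$ follows as $\EuScript{D}_{H\to D}\circ(F_c\otimes\id_W)^{-1}$.
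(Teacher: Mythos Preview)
Your argument is correct. Note, however, that the paper does not itself prove this theorem: it is quoted from \cite[Cor.\ 4.3]{KP1}, and Section \ref{sec:abelian} only supplies the preparatory facts (constant coefficients, the definition of $\symb$, and the inclusion $\Diff_G(\Cal V,\Cal W)\subset\C[\fn_-;\partial/\partial z]\otimes\Hom_\C(V,W)$) without carrying out the verification of commutativity. Your proof is precisely the natural way to complete what the paper sets up: the crucial point is that for $X_i\in\fn_-$ one has $d\pi_{(\sigma,\lambda)^*}(X_i)=-dR(X_i)=-\partial/\partial z_i$, whence $\widehat{d\pi_{(\sigma,\lambda)^*}}(X_i)$ acts on $\Pol(\fn_+)$ as multiplication by $\zeta_i$, which matches $\symb(dR(X_i))=\zeta_i$; the abelianness of $\fn_-$ then lets this extend multiplicatively to all of $\Cal U(\fn_-)\simeq S(\fn_-)$, and comparison with \eqref{eqn:HD} and \eqref{eqn:Fc} gives the commutativity. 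This is essentially the argument one would expect in \cite{KP1} as well, so there is no genuine divergence of approach---you have simply written out what the paper takes for granted by citation.
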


\subsection{A recipe of the F-method for abelian nilradical $\fn_+$}
\label{sec:recipe}

By \eqref{eqn:Sol} and Theorem \ref{thm:abelian},
one can classify and construct 
$\D \in  \operatorname{Diff}_{G}(\mathcal V, \mathcal W)$ and
$\varphi \in \operatorname{Hom}_{\mathfrak g,P}(M_\fp(W^\vee),M_\fp(V^\vee))$
by computing $\psi \in \mathrm{Sol}(\mathfrak n_+;V,W)$ as follows.
\vskip 0.1in

\begin{enumerate}

\item[Step 1]
Compute $d\pi_{(\sigma,\lambda)^*}(C)$ and 
$\widehat{d\pi_{(\sigma,\lambda)^*}}(C)$
for $C \in \fn_+$.
\vskip 0.1in

\item[Step 2]
Classify and construct 
$\psi \in \Hom_{MA}(W^\vee, \Pol(\fn_+)\otimes V^\vee)$.
\vskip 0.1in

\item[Step 3]
Solve the F-system \eqref{eqn:Fsys}
for $\psi \in \Hom_{MA}(W^\vee, \Pol(\fn_+)\otimes V^\vee)$.
\vskip 0.1in

\item[Step 4]
For $\psi \in \mathrm{Sol}(\mathfrak n_+;V,W)$ obtained in Step 3,
do the following.
\vskip 0.1in

\begin{enumerate}
\item[Step 4a]
Apply $\symb^{-1}$ 
to $\psi \in \mathrm{Sol}(\mathfrak n_+;V,W)$
to obtain $\D \in  \operatorname{Diff}_{G}(\mathcal V, \mathcal W)$.
\vskip 0.1in
\item[Step 4b]
Apply $F_c^{-1} \otimes \id_W$ 
to $\psi \in \mathrm{Sol}(\mathfrak n_+;V,W)$
to obtain $\varphi \in \operatorname{Hom}_{\mathfrak g,P}(M_\fp(W^\vee),M_\fp(V^\vee))$.
\end{enumerate}
\vskip 0.1in

\end{enumerate}

\vskip 0.1in

In Section \ref{sec:proof}, we shall carry out the recipe for the case 
$(G,P) = (SL(n,\R), P_{1,n-1})$.

\section{Specialization to $(SL(n,\R), P_{1,n-1})$}\label{sec:SLn}

The aim of this short section is to specialize the general framework in 
Section \ref{sec:Fmethod} to the case  $(G, P) = (SL(n,\R), P_{1,n-1})$,
where $P_{1,n-1}$ denotes the maximal parabolic subgroup of $G$ corresponding
to the partition $n= 1 + (n-1)$. Throughout this section, we assume $n\geq 2$.

\subsection{Notation}\label{sec:notation}

Let $G = SL(n,\R)$ with Lie algebra $\fg(\R)=\f{sl}(n,\R)$ for $n\geq 2$.
We put
\begin{equation*}
N_j^+:=E_{1,j+1},
\quad
N_j^-:=E_{j+1,1}
\quad
\text{for $j\in \{1,\ldots, n-1\}$}
\end{equation*}
and 
\begin{equation*}
H_0:=\frac{1}{n}((n-1)E_{1,1} - \sum^n_{r=2}E_{r,r})
=\frac{1}{n}\diag(n-1, -1, -1, \ldots, -1),
\end{equation*}
where $E_{i,j}$ denote the matrix units. We normalize $H_0$ as
$\wH_0:=\frac{n}{n-1}H_0$,
namely,
\begin{align*}
\wH_0=\frac{1}{n-1}((n-1)E_{1,1} - \sum^n_{r=2}E_{r,r})
=\frac{1}{n-1}\diag(n-1, -1, -1, \ldots, -1).
\end{align*}
Let
\begin{align}\label{eqn:nR}
\fn_+(\R) =\Ker(\ad(H_0)-\id) 
&= \Ker(\ad(\wH_0)-\tfrac{n}{n-1}\id),\\
\fn_-(\R)=\Ker(\ad(H_0)+\id) 
&=\Ker(\ad(\wH_0)+\tfrac{n}{n-1}\id).\nonumber
\end{align}
Then we have
\begin{equation*}
\fn_{\pm}(\R)=\spn_{\R}\{N_1^{\pm},\ldots, N_{n-1}^{\pm} \}.
\end{equation*}

For $X, Y \in \fg(\R)$, let $\Tr(X,Y)=\text{Trace}(XY)$ denote the trace form of $\fg(\R)$. 
Then $N_i^+$ and $N_j^-$ satisfy $\Tr(N_i^+,N_j^-)=\delta_{i,j}$.
In what follows, we identify  the dual $\fn_-(\R)^\vee$ of $\fn_-(\R)$
with $\fn_-(\R)^\vee \simeq \fn_+(\R)$ via the trace form $\Tr(\cdot, \cdot)$.

Let $\fa(\R)= \R \wH_0$ and
\begin{equation}\label{eqn:Lm}
\fm(\R)= \left\{
\begin{pmatrix}
0 & \\
 & X
\end{pmatrix}
:
X \in \f{sl}(n-1,\R)
\right\}\simeq \f{sl}(n-1,\R).
\end{equation}
Here $\f{sl}(1,\R)$ is regarded as $\f{sl}(1,\R)=\{0\}$. 
We have $\fm(\R)\oplus \fa(\R) = \Ker(\ad(H_0))$ and
the decomposition
$\fg(\R) = \fn_-(\R) \oplus \fm(\R) \oplus \fa(\R) \oplus \fn_+(\R)$
is a Gelfand--Naimark decomposition of $\fg(\R)$.
The subalgebra
$\fp(\R):=\fm(\R) \oplus \fa(\R) \oplus \fn_+(\R)$ is a maximal parabolic 
subalgebra of $\fg(\R)$. 
It is remarked that $\fn_{\pm}(\R)$ are abelian.

Let $P$ be  the normalizer $N_G(\fp(\R))$ of $\fp(\R)$ in $G$. 
We write $P=MAN_+$ for the Langlands decomposition of $P$ corresponding to 
$\fp(\R)=\fm(\R) \oplus \fa(\R) \oplus \fn_+(\R)$. 
Then $A = \exp(\fa(\R)) = \exp(\R \wH_0)$ and 
$N_+=\exp(\fn_+(\R))$. The group $M$ is given by
\begin{equation*}
M= 
\left\{
\begin{pmatrix}
\det(g)^{-1} &\\
& g\\
\end{pmatrix}
:
g \in SL^{\pm}(n-1,\R)
\right\}\simeq SL^{\pm}(n-1,\R).
\end{equation*}
Here $SL^{\pm}(1,\R)$ is regarded as
$SL^{\pm}(1,\R) = \{\pm 1\}$.
As $M$ is not connected, let $M_0$ denote the identity component of $M$.
We write
\begin{equation*}
\gamma = \diag(-1, 1, \ldots, 1, -1).
\end{equation*}
Then $M_0 \simeq SL(n-1,\R)$ and 
\begin{equation*}
M/M_0 = \{[I_n], [\gamma]\} \simeq \Z/2\Z,
\end{equation*}
where $I_n$ is the $n \times n$ identity matrix and $[g] = g M_0$.

For a closed subgroup $J$ of $G$, we denote by $\Irr(J)$ and $\Irr(J)_{\fin}$
the sets of equivalence classes of irreducible representations of $J$  and 
finite-dimensional irreducible representations of $J$, respectively.

For $\lambda \in \C$, 
we define a one-dimensional representation 
$\C_\lambda=(\chi^\lambda, \C)$ of $A=\exp(\R\wH_0)$ by
\begin{equation}\label{eqn:chi}
\chi^\lambda \colon \exp(t \wH_0) \longmapsto \exp(\lambda t).
\end{equation}
Then $\Irr(A)$ is given by
\begin{equation*}
\Irr(A)=\{\C_\lambda : \lambda \in \C\} \simeq \C.
\end{equation*}

For $\alpha \in \{\pm\}$, a one-dimensional representation $\C_\alpha$ of $M$ 
is defined  by
\begin{equation*}
\begin{pmatrix}
\det(g)^{-1} &\\
& g\\
\end{pmatrix}
\longmapsto
\sgn^\alpha(\det(g)),
\end{equation*}
where
\begin{equation*}
\sgn^\alpha(\det(g)) 
= 
\begin{cases}
1 & \text{if $\alpha=+$},\\
\sgn(\det(g)) & \text{if $\alpha=-$}.
\end{cases}
\end{equation*}
Then $\Irr(M)_\fin = \Irr(SL^{\pm}(n-1,\R))_\fin$ is given by
\begin{equation*}
\Irr(M)_{\fin}=
\{\C_\alpha \otimes \varpi: 
(\alpha, \varpi) \in \{\pm\} \times \Irr(SL(n-1,\R))_{\fin}\}.
\end{equation*}
Since $\Irr(P)_{\fin}\simeq \Irr(M)_{\fin} \times \Irr(A)$, the set $\Irr(P)_\fin$ can be 
parametrized by
\begin{equation*}
\Irr(P)_{\fin}\simeq 
\{\pm\} \times \Irr(SL(n-1,\R))_{\fin} \times \C.
\end{equation*}

For $(\ga, \varpi, \lambda) \in \{\pm\} \times \Irr(SL(n-1,\R))_{\fin} \times \C$,
we write
\begin{equation}\label{eqn:Ind}
I(\varpi,\lambda)^\ga 
= 
\Ind_{P}^G\left((\C_\ga \otimes \varpi)\boxtimes \C_\lambda\right)
\end{equation}
for unnormalized parabolically induced representation 
$\Ind_{P}^G\left((\C_\ga \otimes \varpi)\boxtimes \C_\lambda \right)$ 
of $G$.
For instance, the unitary axis of $I(\triv, \lambda)^\ga$ is $\text{Re}(\lambda) = \frac{n}{2}$, where $\triv$ denotes the trivial representation of $SL(n-1,\R)$.
Similarly, for the representation space $W$ of $(\ga, \varpi, \lambda) \in \Irr(P)_{\fin}$,
we write 
\begin{equation}\label{eqn:Verma}
M_\fp(\varpi,\lambda)^\ga 
=\Cal{U}(\fg) \otimes_{\Cal{U}(\fp)}W.
\end{equation}

In the next section we classify and construct intertwining differential operators 
\begin{equation*}
\D \in \Diff_G(I(\triv, \lambda)^\ga, I(\varpi,  \nu)^\beta)
\end{equation*}
and $(\fg, P)$-homomorphisms
\begin{equation*}
\varphi \in \Hom_{\fg, P}(M_\fp(\varpi,\nu)^\ga, M_\fp(\triv, \lambda)^\gb).
\end{equation*}

\section{Classification and construction of $\D$ and $\varphi$}
\label{sec:Dphi}

The aim of this section is to state the classification and construction results 
for intertwining differential operators 
$\D \in \Diff_G(I(\triv, \lambda)^\ga, I(\varpi,  \nu)^\beta)$
as well as $(\fg, P)$-homomorphisms 
$\varphi \in \Hom_{\fg, P}(M_\fp(\varpi,\nu)^\ga, M_\fp(\triv, \lambda)^\gb)$.
These are given in 
Theorems \ref{thm:IDO1} and \ref{thm:IDO} for $\D$ 
and Theorems \ref{thm:Hom1} and \ref{thm:Hom} for $\varphi$.
The proofs of the theorems will be discussed in Section \ref{sec:proof}.
We remark that 
the case of $n=3$ is studied in \cite{Kubo22}.

\subsection{Classification and construction of intertwining differential operators $\D$}
\label{sec:D}
We start with the classification of the parameters 
 $(\alpha,\beta; \varpi; \lambda, \nu) 
\in \{\pm\}^2\times \Irr(SL(n-1,\R))_{\fin} \times \C^2$
such that $\Diff_G(I(\triv, \lambda)^\alpha, I(\varpi, \nu)^\beta) \neq \{0\}$.

For $\ga \in \{\pm\}\equiv \{\pm 1\}$ 
and $k \in \Z_{\geq 0}$, we mean 
$\ga + k \in \{\pm\}$ by
\begin{equation*}
\ga+k = 
\begin{cases}
+ & \text{if $\ga = (-1)^k$},\\
- & \text{if $\ga = (-1)^{k+1}$}.
\end{cases}
\end{equation*}
Then we define $\Lambda^n_{G} \subset \{\pm\}^2 \times \Irr(SL(n-1,\R))_{\fin} \times \C^2$ 
as
\begin{equation}\label{eqn:20240506b}
\Lambda^n_{G}=\{
(\alpha, \alpha+k;\poly_{n-1}^k; 1-k, 
1+\tfrac{k}{n-1} ): \alpha \in \{\pm\} \; \text{and} \; k\in \Z_{\geq 0}\},
\end{equation}
where $\poly_{n-1}^k$ denotes the irreducible representation 
on $\Pol^k(\C^{n-1}) = S^k((\C^{n-1})^\vee)$ 
of $SL(n-1,\R)$ induced by the standard action
on $\C^{n-1}$. We regard $\poly_1^k$ as $\poly_1^k = \triv$ for all $k \in \Z_{\geq 0}$.

\begin{thm}
\label{thm:IDO1}
The following three conditions on
 $(\alpha,\beta; \varpi; \lambda, \nu) 
\in \{\pm\}^2\times \Irr(SL(n-1,\R))_{\fin} \times \C^2 $ are 
equivalent.
\begin{enumerate}
\item[\emph{(i)}] 
$\Diff_G(I(\triv, \lambda)^\alpha, I(\varpi, \nu)^\beta) \neq \{0\}$.
\item[\emph{(ii)}] $\dim \Diff_G(I(\triv, \lambda)^\alpha, I(\varpi, \nu)^\beta) =1 $.
\item[\emph{(iii)}] 
One of the following two conditions holds:
\begin{enumerate}
\item[\emph{(iii-a)}]
$(\beta,\varpi, \nu) = (\alpha, \triv, \lambda)$.
\item[\emph{(iii-b)}]
$(\alpha,\beta; \varpi; \lambda, \nu)  \in \Lambda^n_G$.
\end{enumerate}
\end{enumerate}
\end{thm}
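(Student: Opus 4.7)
The plan is to apply the F-method recipe of Section~\ref{sec:recipe}. Since the nilradical $\fn_+$ of $P_{1,n-1}$ is abelian, Theorems~\ref{thm:Fmethod} and~\ref{thm:abelian} give a linear isomorphism
\begin{equation*}
\Diff_G(I(\triv,\lambda)^\ga, I(\varpi,\nu)^\gb) \;\simeq\; \Sol(\fn_+; V, W),
\end{equation*}
where $V$ and $W$ denote the representation spaces of $(\ga,\triv,\lambda)$ and $(\gb,\varpi,\nu)$. Since $V$ is one-dimensional, the task reduces to classifying $MA$-equivariant maps $\psi\colon W^\vee \to \Pol(\fn_+) \otimes V^\vee$ killed by the F-system operators $\widehat{d\pi}(N_j^+)$ for $j = 1, \ldots, n-1$, where I abbreviate $\widehat{d\pi}=\widehat{d\pi_{((\ga,\triv),\lambda)^*}}$.

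For Step~1, I would decompose $\Ad(\bar n^{-1}) N_j^+$ along $\fn_- \oplus \fl \oplus \fn_+$ for $\bar n = \exp(\sum_i x_i N_i^-)$ by a direct matrix computation, then substitute into \eqref{eqn:dpi3} and apply the algebraic Fourier transform \eqref{eqn:Weyl2}. Using $2\rho(\wH_0)=n$ and that $\sigma=(\ga,\triv)$ is trivial on $\fm$, this should yield
\begin{equation*}
\widehat{d\pi}(N_j^+) \;=\; -(\lambda + E_\zeta)\tfrac{\partial}{\partial \zeta_j}, \qquad j = 1,\ldots,n-1,
\end{equation*}
where $E_\zeta = \sum_i \zeta_i \partial/\partial \zeta_i$ is the Euler operator on $\fn_+$. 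For Step~2, I would observe that a direct computation of $\Ad(m)N_j^+ = \det(g)^{-1}\sum_k (g^{-1})_{jk}N_k^+$ identifies $\fn_+ \simeq \sgn \otimes (\C^{n-1})^\vee$ as an $M = SL^\pm(n-1,\R)$-module (the $\sgn$ coming from the $\det(g)^{-1}$). Hence
\begin{equation*}
\Pol^k(\fn_+) \otimes V^\vee \;\simeq\; \sgn^{\ga + k} \otimes (\poly_{n-1}^k)^\vee
\end{equation*}
as $M$-modules, with $M_0 = SL(n-1,\R)$-irreducible factor. Matching $M$-characters and $\wH_0$-weights of $W^\vee$ with those of the degree-$k$ component via Schur's lemma forces $\varpi = \poly_{n-1}^k$, $\gb = \ga + k$, and $\nu = \lambda + kn/(n-1)$, and produces a one-dimensional $\Hom_{MA}$ space concentrated in degree $k$.

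For Step~3, the unique nonzero $\psi$ is a homogeneous polynomial of degree $k$. Evaluating the F-system on $\psi$ turns it into the scalar relation $(\lambda + k - 1)\,\partial \psi/\partial \zeta_j = 0$ for all $j$. When $k = 0$, this is automatic and produces case (iii-a) with $\lambda$ free and $\nu = \lambda$. When $k \geq 1$, nonconstancy of $\psi$ forces $\lambda = 1 - k$, which combined with the $A$-character match gives $\nu = 1 + k/(n-1)$, placing the parameters in $\Lambda_G^n$ and yielding case (iii-b). In both cases the solution is unique up to scalar, so (i), (ii), and (iii) are equivalent with the dimension exactly one. The hard part will be Step~2: carefully tracking the sign character of the disconnected group $M = SL^\pm(n-1,\R)$, since it is precisely the $\sgn^k$ twist of $\Pol^k(\fn_+)$ that produces the shift $\gb = \ga + k$ in $\Lambda_G^n$.
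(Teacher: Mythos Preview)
Your plan is correct and follows essentially the same route as the paper's proof in Section~\ref{sec:proof}: the paper also carries out Steps 1--4 of the F-method recipe, obtaining (after multiplying by $-\zeta_j$) the equivalent operator $\vartheta_j(\lambda-1+E_\zeta)$ in Proposition~\ref{prop:wdpi}, the $MA$-classification $(\beta,\varpi,\nu)=(\alpha+k,\poly_{n-1}^k,\lambda+\tfrac{nk}{n-1})$ in Proposition~\ref{prop:sym}, and then the dichotomy $k=0$ or $\lambda=1-k$ from the F-system in Theorem~\ref{thm:Sol1}. Your form $\widehat{d\pi}(N_j^+)=-(\lambda+E_\zeta)\tfrac{\partial}{\partial\zeta_j}$ is just the commuted version of the paper's $\tfrac{\partial}{\partial\zeta_j}(1-\lambda-E_\zeta)$, and your identification $\fn_+\simeq\sgn\otimes(\C^{n-1})^\vee$ is exactly what underlies the paper's \eqref{eqn:20240629}.
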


We next consider the explicit formula of 
$\D \in \Diff_G(I(\triv, \lambda)^\ga, I(\varpi, \nu)^\gb)$
for $(\alpha,\beta; \varpi; \lambda, \nu)$ in (iii) of Theorem \ref{thm:IDO1}.
We write
\begin{equation*}
\Pol^k(\C^{n-1}) = \C^k[y_1,\ldots, y_{n-1}].
\end{equation*}
Then, 
as in \eqref{eqn:DN} and \eqref{eqn:20240503a},
we understand 
$\D \in \Diff_G(I(\triv, \lambda)^\ga, I(\varpi,\nu)^\gb)$ 
for $(\alpha,\beta; \varpi; \lambda, \nu) \in \gL^n_G$
as a map
\begin{equation*}
\D\colon C^\infty(\R^{n-1}) \To 
C^{\infty}(\R^{n-1})\otimes \C^k[y_1,\ldots, y_{n-1}]
\end{equation*}
via the diffeomorphism
\begin{equation}\label{eqn:n-}
\R^{n-1} \stackrel{\sim}{\To} N_-, \quad (x_1, \ldots, x_{n-1}) 
\mapsto \exp(x_1 X_1 + \cdots +  x_{n-1} X_{n-1}).
\end{equation}

For $k \in \Z_{\geq 0}$, we put
\begin{equation*}
\Xi_k:=\{(k_1, \ldots, k_{n-1}) \in (\Z_{\geq 0})^{n-1} : \sum_{j=1}^{n-1} k_j= k\}.
\end{equation*}

\noindent
For $\mathbf{k} = (k_1, \ldots, k_{n-1})\in \Xi_k$, we write
\begin{align*}
\widetilde{y}_{\mathbf{k}} &= \frac{1}{k_1! \cdots k_{n-1}!} 
\cdot
y_1^{k_1}\cdots y_{n-1}^{k_{n-1}},\\[3pt]
\frac{\partial^k}{\partial x^{\mathbf{k}}} \
&= \frac{\partial^k}{\partial x_1^{k_1} \cdots \partial x_{n-1}^{k_{n-1}}}.
\end{align*}

\noindent
We define $\D_k \in \Diff_\C(C^\infty(\R^{n-1}), C^\infty(\R^{n-1})\otimes 
\C^k[y_1, \ldots, y_{n-1}])$ for $k \in \Z_{\geq 0}$ by
\begin{equation}\label{eqn:IDO}
\D_k = \sum_{\mathbf{k} \in \Xi_k} \frac{\partial^k}{\partial x^{\mathbf{k}}}
\otimes 
\widetilde{y}_{\mathbf{k}}.
\end{equation}

\noindent
For $k=0$, we understand $\D_0$ as the identity operator $\D_0 = \id$.

\begin{thm}\label{thm:IDO}
We have
\begin{equation*}
\Diff_G(I(\triv, \lambda)^\alpha, I(\varpi, \nu)^\beta)
=
\begin{cases}
\C\id & \text{if $(\beta, \varpi, \nu) = (\alpha, \triv, \lambda)$,}\\
\C \D_k & \text{if $(\alpha, \beta;\varpi; \lambda, \nu)\in \Lambda^n_{G}$,}\\
\{0\} & \text{otherwise.}
\end{cases}
\end{equation*}
\end{thm}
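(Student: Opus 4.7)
Since Theorem \ref{thm:IDO1} has already pinned down both the nonvanishing locus and the dimension ($\leq 1$), the task reduces to exhibiting an explicit generator. The case $(\beta,\varpi,\nu)=(\alpha,\triv,\lambda)$ is immediate: the identity map $\id$ is manifestly $G$-intertwining and spans the one-dimensional space $\Diff_G(I(\triv,\lambda)^\alpha,I(\triv,\lambda)^\alpha)$. For $(\alpha,\beta;\varpi;\lambda,\nu)\in\Lambda^n_G$, the strategy is to apply the F-method recipe of Section \ref{sec:recipe}. Since $\fn_+$ is abelian, Theorem \ref{thm:abelian} reduces the problem to producing a single distinguished polynomial in $\Sol(\fn_+;V,W)$ and then taking its inverse symbol.

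First I would identify the candidate solution. Let $(\zeta_1,\ldots,\zeta_{n-1})$ be the coordinates on $\fn_+$ dual to the basis $(N_1^-,\ldots,N_{n-1}^-)$ of $\fn_-$, so that $\symb(\partial/\partial x_i)=\zeta_i$. A direct multinomial collapse gives
\begin{equation*}
\symb(\D_k) \;=\; \sum_{\mathbf{k}\in\Xi_k}\zeta^{\mathbf{k}}\otimes\widetilde{y}_{\mathbf{k}}
\;=\; \frac{1}{k!}\bigl(y_1\zeta_1+\cdots+y_{n-1}\zeta_{n-1}\bigr)^{k},
\end{equation*}
which we denote $\psi_k\in\Pol(\fn_+)\otimes\poly_{n-1}^k$. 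By Theorem \ref{thm:abelian} it suffices to show $\psi_k\in\Sol(\fn_+;\C_\alpha,\C_{\alpha+k}\otimes\poly_{n-1}^k)$ for $(\lambda,\nu)=(1-k,\,1+\tfrac{k}{n-1})$.

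The next step is to execute Step 1 of the recipe: compute $\widehat{d\pi_{(\triv,\lambda)^*}}(N_j^+)$ for $j=1,\ldots,n-1$ by expanding $\Ad(\bar n^{-1})N_j^+\in\fn_-\oplus\fl\oplus\fn_+$ using the $\f{sl}(n,\C)$-bracket and applying formula \eqref{eqn:dpi3} followed by the algebraic Fourier transform \eqref{eqn:Weyl}. Because $\fn_-$ is abelian of rank $n-1$ and the $\fa$-eigenvalue structure \eqref{eqn:nR} is simple, each operator $\widehat{d\pi_{(\triv,\lambda)^*}}(N_j^+)$ will come out as a second-order differential operator in $(\zeta_1,\ldots,\zeta_{n-1})$ of the schematic form
\begin{equation*}
\widehat{d\pi_{(\triv,\lambda)^*}}(N_j^+) \;=\; c_1(\lambda)\,\partial_{\zeta_j} \;+\; c_2\sum_i\zeta_i\,\partial_{\zeta_i}\partial_{\zeta_j} \;+\; (\text{$\fm$-action terms}),
\end{equation*}
with coefficients depending linearly on $\lambda$. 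Then I would verify two things: (a) $MA$-covariance of $\psi_k$, which is a weight computation using the $A$-weight $\lambda-\nu-\tfrac{n}{n-1}\cdot 0$ carried by $(y\cdot\zeta)^k$ under $\Ad_\#$ and the $\gamma$-sign $(-1)^k$ forcing $\beta=\alpha+k$; this is exactly where $\nu=1+k/(n-1)$ and the sign $\alpha+k$ arise. And (b) the F-system $(\widehat{d\pi_{(\triv,\lambda)^*}}(N_j^+)\otimes\id_W)\psi_k=0$, which reduces to a polynomial identity in $(y,\zeta)$ that should hold precisely at $\lambda=1-k$. Granted (a) and (b), $\psi_k\in\Sol(\fn_+;V,W)$ is nonzero, hence $\symb^{-1}(\psi_k)=\D_k$ lies in $\Diff_G(I(\triv,\lambda)^\alpha,I(\varpi,\nu)^\beta)$ and spans it by Theorem \ref{thm:IDO1}.

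The principal obstacle is Step 1: obtaining a clean, workable expression for the operators $\widehat{d\pi_{(\triv,\lambda)^*}}(N_j^+)$. Once these are in hand, item (a) is a routine weight bookkeeping on the irreducible $SL(n-1)$-module $\poly_{n-1}^k$ paired with $S^k\C^{n-1}$, and item (b) becomes a direct check that when $\lambda=1-k$ the linear combination of Euler-like and divergence-like operators annihilates $(y\cdot\zeta)^k$. The fact that the identity holds only at $\lambda=1-k$ is what re-derives, from the F-method side, the locus $\Lambda^n_G$ introduced in \eqref{eqn:20240506b}, so Theorems \ref{thm:IDO1} and \ref{thm:IDO} are in effect proved in a single pass.
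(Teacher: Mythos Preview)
Your proposal is correct and follows essentially the same route as the paper: both proceed via the F-method recipe (Steps 1--4 of Section~\ref{sec:recipe}), computing $\widehat{d\pi_{\lambda^*}}(N_j^+)$, identifying the unique $MA$-equivariant candidate $\psi_k=\sum_{\mathbf{k}\in\Xi_k}\zeta_{\mathbf{k}}\otimes\widetilde{y}_{\mathbf{k}}$, and checking that the F-system forces $\lambda=1-k$. Two small remarks: since $V=\C_\alpha$ is one-dimensional, there are no ``$\fm$-action terms'' in $\widehat{d\pi_{\lambda^*}}(N_j^+)$---the paper obtains the clean scalar formula $-\zeta_j\widehat{d\pi_{\lambda^*}}(N_j^+)\vert_{\Pol^k(\fn_+)}=(\lambda-1+k)\vartheta_j$, which makes (b) immediate; and rather than invoking Theorem~\ref{thm:IDO1} as input, the paper proves it simultaneously with Theorem~\ref{thm:IDO} from Corollary~\ref{cor:Sol1}, exactly as you note in your final sentence.
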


\subsection{Classification and construction of $(\fg, P)$-homomorphisms $\varphi$}
\label{sec:phi}

Let $\fg =\fg(\R)\otimes_{\R}\C= \f{sl}(n,\C)$.
We regard $\f{sl}(1,\C)$ as $\f{sl}(1,\C) = \{0\}$. 

Define $\Lambda^n_{(\fg, P)} \subset 
\{\pm\}^2\times \Irr(SL(n-1,\R))_{\fin} \times \C^2$ as
\begin{equation*}
\Lambda^n_{(\fg, P)}=\{
(\ga,\ga+k;\sym_{n-1}^k; k-1,  -(1+\tfrac{k}{n-1})) : \alpha \in \{\pm\} \; \text{and} \; k\in \Z_{\geq 0}\},
\end{equation*}
where $\sym_{n-1}^k$ denotes the irreducible representation 
on $S^k(\C^{n-1})$ of $SL(n-1,\R)$.
As for $\poly_1^k$, we regard $\sym_1^k$ 
as $\sym_1^k = \triv$ for all $k \in \Z_{\geq 0}$.

The classification of the parameters 
 $(\alpha,\beta; \sigma; s, r) \in 
\{\pm\}^2\times \Irr(SL(n-1,\R))_{\fin} \times \C^2$
such that $\Hom_{\fg,P}(M_\fp(\sigma,r)^\gb, M_\fp(\triv, s)^\ga)\neq \{0\}$
is given as follows.

\begin{thm}
\label{thm:Hom1}
The following three conditions on 
 $(\alpha,\beta; \sigma; s, r) \in 
\{\pm\}^2\times \Irr(SL(n-1,\R))_{\fin} \times \C^2$
are 
equivalent.
\begin{enumerate}
\item[\emph{(i)}] 
$\Hom_{\fg,P}(M_\fp(\sigma,r)^\gb, M_\fp(\triv, s)^\ga)\neq \{0\}$.
\item[\emph{(ii)}] 
$\dim \Hom_{\fg, P}(M_\fp(\sigma,r)^\gb, M_\fp(\triv, s)^\ga) =1 $.
\item[\emph{(iii)}] 
One of the following two conditions holds:
\begin{enumerate}
\item[\emph{(iii-a)}]
$(\gb,\sigma, r) = (\ga,\triv, s)$.
\item[\emph{(iii-b)}]
$(\ga, \gb; \sigma; s, r)  \in \Lambda^n_{(\fg, P)}$.
\end{enumerate}
\end{enumerate}
\end{thm}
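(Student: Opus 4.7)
\emph{Overall strategy.} The plan is to reduce Theorem \ref{thm:Hom1} directly to the already-stated Theorem \ref{thm:IDO1} via the duality theorem. The version of the duality recorded in \eqref{eqn:duality2} provides a canonical isomorphism
\[
\Hom_{\fg, P}(M_\fp(W^\vee), M_\fp(V^\vee)) \stackrel{\sim}{\To} \Diff_G(\Cal{V}, \Cal{W})
\]
for appropriate $P$-modules $V, W$. It therefore suffices to identify the induced representations on the right-hand side that correspond to $M_\fp(\triv, s)^\ga$ and $M_\fp(\sigma, r)^\gb$ on the left, and then read off the classification from Theorem \ref{thm:IDO1}.

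\emph{Parameter matching.} Recall from Section \ref{sec:duality} that when $V$ carries the $P$-module structure $\sigma_\lambda$, the $\fp$-action on $V^\vee$ used to form $M_\fp(V^\vee)$ is $d\sigma^\vee \boxtimes \C_{-\lambda}$. Matching this with $d(\C_\ga \otimes \triv) \boxtimes \C_s$ and using that the one-dimensional $M$-characters $\C_\pm$ are self-dual, I see that $\Cal{V}$ is the bundle associated to $I(\triv, -s)^\ga$. The analogous bookkeeping gives that $\Cal{W}$ corresponds to $I(\sigma^\vee, -r)^\gb$. Hence the homomorphism space in question is naturally isomorphic to
\[
\Diff_G\bigl(I(\triv, -s)^\ga,\; I(\sigma^\vee, -r)^\gb\bigr).
\]

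\emph{Applying Theorem \ref{thm:IDO1}.} The equivalence (i)$\Leftrightarrow$(ii) of Theorem \ref{thm:Hom1} now follows at once from the corresponding equivalence in Theorem \ref{thm:IDO1}, since the $\Diff_G$-side has dimension $0$ or $1$. For (iii), I translate the two cases of Theorem \ref{thm:IDO1}(iii) through the dictionary above. The trivial case $(\gb, \sigma^\vee, -r) = (\ga, \triv, -s)$ becomes $(\gb, \sigma, r) = (\ga, \triv, s)$, which is (iii-a). The nontrivial case $(\ga, \gb; \sigma^\vee; -s, -r) \in \Lambda^n_G$ unwinds by \eqref{eqn:20240506b} to $\gb = \ga + k$, $s = k - 1$, $r = -(1 + \tfrac{k}{n-1})$, and $\sigma^\vee = \poly_{n-1}^k = S^k((\C^{n-1})^\vee)$. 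Dualizing the $SL(n-1,\R)$-representation gives $\sigma = S^k(\C^{n-1}) = \sym_{n-1}^k$, which is exactly the defining condition of $\Lambda^n_{(\fg, P)}$.

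\emph{Expected obstacle.} There is essentially no new content beyond careful bookkeeping of contragredients: the sign flip $\lambda \mapsto -\lambda$ in the $\fa$-character, the replacement of $\poly_{n-1}^k$ by $\sym_{n-1}^k$ on dualizing the $SL(n-1,\R)$-factor, and the self-duality of $\C_\pm$. The one point deserving a sentence of comment is that the duality is for $(\fg, P)$-homomorphisms rather than bare $\fg$-homomorphisms, which is relevant because $P$ is disconnected; this is harmless here since the component-group action on the relevant weight spaces is already encoded in the characters $\C_\ga, \C_\gb$, so the reduction to Theorem \ref{thm:IDO1} is clean.
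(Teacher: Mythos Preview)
Your argument is correct and essentially equivalent to the paper's: in Section~\ref{sec:step4} the paper proves Theorems~\ref{thm:IDO1} and~\ref{thm:Hom1} together by first computing $\Sol(\fn_+;\triv_{\alpha,\lambda},\varpi_{\beta,\nu})$ and then applying the two maps $\symb^{-1}$ and $F_c^{-1}\otimes\id_W$ from the commutative triangle~\eqref{eqn:isom3}, whereas you take Theorem~\ref{thm:IDO1} as already established and traverse the remaining duality edge $\EuScript{D}_{H\to D}$ of the same triangle. Your parameter bookkeeping (the sign flip $\lambda\mapsto -\lambda$, the contragredient $\poly_{n-1}^k\leftrightarrow\sym_{n-1}^k$, and the self-duality of $\C_\pm$) is exactly what the paper records at the end of Section~\ref{sec:step4}.
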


To give the explicit formula of 
$\varphi \in \Hom_{\fg,P}(M_\fp(\sigma,r)^\gb, M_\fp(\triv, s)^\ga)$,
we write
\begin{equation*}
S^k(\C^{n-1}) = \C^k[e_1, \ldots, e_{n-1}],
\end{equation*}
where $e_j$ are the standard basis elements of $\C^{n-1}$. 

For $\mathbf{k} = (k_1, \ldots, k_{n-1})\in \Xi_k$, we write
\begin{alignat*}{2}
e_{\mathbf{k}} &= e_1^{k_1} \cdots e_{n-1}^{k_{n-1}} &&\in S^k(\C^{n-1}),\\
N_{\mathbf{k}}^- &= (N_1^-)^{k_1} \cdots (N_{n-1}^-)^{k_{n-1}}
&&\in S^k(\fn_-).
\end{alignat*}
\noindent
Observe that we have 
\begin{equation*}
\C^k[y_1, \ldots, y_{n-1}] 
= \Pol^k(\C^{n-1}) = S^k((\C^{n-1})^\vee)\simeq S^k(\C^{n-1})^\vee.
\end{equation*}
We then define $y_j\in (\C^{n-1})^\vee$ in such a way that  $y_i(e_j) = \delta_{i,j}$,
which gives 
$\widetilde{y}_{\mathbf{k}}(e_{\mathbf{k}'})=\delta_{\mathbf{k},\mathbf{k}'}$
for $\mathbf{k},\mathbf{k}' \in \Xi_k$.

We define $\varphi_k \in 
\Hom_\C(S^k(\C^{n-1}), S^k(\fn_-))$ by means of
\begin{equation}\label{eqn:Hom}
\varphi_k 
= \sum_{\mathbf{k} \in \Xi_k} 
N_{\mathbf{k}}^-
\otimes 
(e_{\mathbf{k}})^\vee
= \sum_{\mathbf{k} \in \Xi_k} 
N_{\mathbf{k}}^-
\otimes 
\widetilde{y}_{\mathbf{k}}.
\end{equation}

\noindent
Since $M_\fp(\triv, s)^\ga \simeq S(\fn_-)$ as linear spaces,
we have
\begin{equation*}
\varphi_k \in \Hom_{\C}(S^k(\C^{n-1}), M_\fp(\triv, s)^\ga).
\end{equation*}

\noindent 
Further, the following holds.

\begin{thm}\label{thm:Hom}
We have
\begin{equation*}
\Hom_{\fg, P}(M_\fp(\sigma,r)^\gb, M_\fp(\triv, s)^\ga)
=
\begin{cases}
\C\id & \text{if $(\gb, \sigma, r) = (\ga, \triv, s)$,}\\
\C \varphi_k & \text{if $(\ga, \gb, \sigma; s, r)\in \gL^n_{(\fg,P)}$,}\\
\{0\} & \text{otherwise.}
\end{cases}
\end{equation*}
\end{thm}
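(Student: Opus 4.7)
My plan is to deduce Theorem~\ref{thm:Hom} from Theorem~\ref{thm:IDO} by transporting the explicit formula of $\D_k$ backwards through the commutative diagram~\eqref{eqn:isom3}. Since $\fn_+$ is abelian, that diagram provides mutually inverse linear isomorphisms among $\Hom_{\fg,P}(M_\fp(W^\vee),M_\fp(V^\vee))$, $\Sol(\fn_+;V,W)$, and $\Diff_G(\Cal V,\Cal W)$, so the classification and one-dimensionality assertions are already delivered by Theorem~\ref{thm:Hom1}. What remains is to exhibit an explicit generator in each nontrivial case: the case $(\beta,\sigma,r)=(\alpha,\triv,s)$ contributes $\C\,\id$, and the case $(\alpha,\beta;\sigma;s,r)\in\Lambda^n_{(\fg,P)}$ requires identifying the generator with $\varphi_k$.

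The main computation would be to evaluate $F_c$ on the monomials $N_{\mathbf k}^-\otimes v^\vee\in M_\fp(\triv,s)^\alpha$. Since $\fn_-$ is abelian, $\Ad(\bar n^{-1})N_j^-=N_j^-$ for every $\bar n\in N_-$, so in~\eqref{eqn:dpi3} the $d\sigma_\lambda^*$ summand vanishes and $d\pi_{(\sigma,\lambda)^*}(N_j^-)=-\partial/\partial x_j$. Applying the algebraic Fourier transform of Weyl algebras~\eqref{eqn:Weyl2} and using commutativity of the $N_j^-$ then yields $F_c(N_{\mathbf k}^-\otimes v^\vee)=\zeta^{\mathbf k}\otimes v^\vee$ for every $\mathbf k\in\Xi_k$. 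Combined with the F-method proof of Theorem~\ref{thm:IDO} in Section~\ref{sec:proof}, which produces the polynomial solution $\psi_k=\sum_{\mathbf k\in\Xi_k}\zeta^{\mathbf k}\otimes\widetilde{y}_{\mathbf k}\in\Sol(\fn_+;V,W)$ whose image under $\symb^{-1}$ is $\D_k$, applying $(F_c\otimes\id_W)^{-1}$ to $\psi_k$ recovers $\sum_{\mathbf k\in\Xi_k}N_{\mathbf k}^-\otimes\widetilde{y}_{\mathbf k}=\varphi_k$, as required.

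The main obstacle I expect is the bookkeeping for $MA$-equivariance that recovers the precise parameters defining $\Lambda^n_{(\fg,P)}$. Since each $N_j^-$ has $\wH_0$-weight $-\tfrac{n}{n-1}$, the $\fa$-weight of $N_{\mathbf k}^-\otimes 1$ in $M_\fp(\triv,s)^\alpha$ equals $s-\tfrac{kn}{n-1}$, which must match $r$; substituting $s=k-1$ yields exactly $r=-(1+\tfrac{k}{n-1})$. For the $M/M_0$-component, $\gamma\in M$ acts on $\fn_-$ by multiplying $N_j^-$ by $-1$ for $1\le j\le n-2$ and by $+1$ for $j=n-1$, while on the standard $\C^{n-1}$ it flips only $e_{n-1}$; matching these two sign characters across $\varphi_k$ forces $\beta(\gamma)=(-1)^k\alpha(\gamma)$, i.e.\ $\beta=\alpha+k$. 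Both identities recover $\Lambda^n_{(\fg,P)}$ as the contragredient image, under Theorem~\ref{thm:duality}, of $\Lambda^n_G$ from Theorem~\ref{thm:IDO}, providing an internal consistency check.
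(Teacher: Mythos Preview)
Your proposal is correct and follows essentially the same route as the paper: the paper's proof in Section~\ref{sec:step4} also proceeds by applying $(F_c^{-1}\otimes\id_W)$ to $\psi_k$, using precisely your identity $F_c(N_{\mathbf k}^-\otimes v^\vee)=\zeta_{\mathbf k}\otimes v^\vee$ (written there as $N_{\mathbf k}^-=F_c^{-1}(\zeta_{\mathbf k})$), and then invoking Corollary~\ref{cor:Sol1} together with $(\poly_{n-1}^k)^\vee\simeq\sym_{n-1}^k$ to match the parameters. Your third paragraph's explicit $\fa$-weight and $M/M_0$-parity checks are redundant once Theorem~\ref{thm:Hom1} is in hand, but they are correct and serve as the consistency check you intended.
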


Here, by abuse of notation, we regard $\varphi_k$ as a map
\begin{equation*}
\varphi_k \in 
\Hom_{\fg, P}(
M_\fp(\sym^k_{n-1}, -(1+\tfrac{k}{n-1}))^{\ga+k},
M_\fp(\triv,k-1)^\ga)
\end{equation*}
defined by
\begin{equation*}
\varphi_k(u\otimes w) := u\varphi_k(w)
\quad 
\text{for $u \in \Cal{U}(\fg)$ and $w \in S^k(\C^{n-1})$.}
\end{equation*}

\begin{rem}
If $\Hom_{\fg, P}(M_\fp(\sigma,r)^\gb, M_\fp(\triv, s)^\ga) \neq 
\{0\}$, then the infinitesimal character of
$M_\fp(\triv, s)^\ga$ agrees with that of $M_\fp(\sigma,r)^\gb$. 
It follows from the arguments on the standardness 
of the homomorphism $\varphi_k$ in Section \ref{sec:Std}
that the infinitesimal characters 
are indeed equal 
for $(\ga, \gb; \sigma; s, r)  \in \Lambda^n_{(\fg, P)}$.
(See the proof of Theorem \ref{thm:std}.)
\end{rem}

\section{Proofs of the classification and construction}
\label{sec:proof}

The aim of this section is to give proofs of 
Theorems \ref{thm:IDO1} and \ref{thm:IDO} 
and Theorems \ref{thm:Hom1} and \ref{thm:Hom}.
As the nilpotent radical $\fn_+$ is abelian,
we achieve them simultaneously by proceeding with Steps 1--4 of the 
recipe of the F-method in Section \ref{sec:recipe}.

\subsection{Step 1:
Compute $d\pi_{(\sigma,\lambda)^*}(C)$ and 
$\widehat{d\pi_{(\sigma,\lambda)^*}}(C)$ for $C \in \fn_+$}
\label{sec:step1}

For $\sigma = \alpha \otimes \triv$ and $\lambda \equiv \chi^\lambda$, 
we simply write
\begin{equation*}
\dpi_{\lambda^*} = \dpi_{(\alpha\otimes\triv,\chi^\lambda)^*}
\end{equation*}
with $\lambda^*=2\rho(\fn_+)-\lambda d\chi$. 
We put
$E_x = \sum_{j=1}^{n-1} x_j\frac{\partial}{\partial x_j}$ for
the Euler homogeneity operator for $x$.

\begin{prop}\label{prop:dNj1}
For $N_j^+ \in \fn_+$, we have
\begin{equation}\label{eqn:dNj1}
d\pi_{\lambda^*}(N_j^+)=
x_j\{ (n-\lambda) 
+E_x\}.
\end{equation} 
\end{prop}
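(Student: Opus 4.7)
The plan is to apply the general formula \eqref{eqn:dpi2} directly, working in the exponential coordinates
$\bar{n}=\exp(X)$ with $X=\sum_{i=1}^{n-1}x_iN_i^-$ on $N_-$. The computation splits into: (i) computing $\Ad(\bar{n}^{-1})N_j^+=e^{-\ad X}(N_j^+)$, (ii) projecting onto the three summands in $\fg=\fn_-\oplus\fl\oplus\fn_+$, and (iii) applying $d\sigma_\lambda^*$ to the $\fl$-part and $dR$ to the $\fn_-$-part.

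For step (i), since $\ad(\wH_0)$ has eigenvalues $-\tfrac{n}{n-1},0,\tfrac{n}{n-1}$ on $\fn_-,\fl,\fn_+$, the expansion of $e^{-\ad X}(N_j^+)$ terminates after the quadratic term. A direct matrix-unit computation using $[E_{i+1,1},E_{1,j+1}]=E_{i+1,j+1}-\delta_{i,j}E_{1,1}$ yields
\begin{equation*}
[X,N_j^+]=x_j(E_{j+1,j+1}-E_{1,1})+\sum_{i\neq j}x_iE_{i+1,j+1},
\end{equation*}
after which I would compute $[X,[X,N_j^+]]$ and show by cancellation that it equals $-2x_jX$, so that the $\fn_-$-component of $\Ad(\bar{n}^{-1})N_j^+$ is $-x_jX=-x_j\sum_i x_iN_i^-$.

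For step (ii), I would decompose the $\fl$-component. The off-diagonal piece $\sum_{i\neq j}x_iE_{i+1,j+1}$ already lies in $\fm$, so it is killed by $d\sigma^\vee$ since $\sigma=\C_\alpha\boxtimes\triv$ is trivial on the semisimple part. The diagonal piece $E_{j+1,j+1}-E_{1,1}$ decomposes as $-\wH_0+(E_{j+1,j+1}-\tfrac{1}{n-1}I_{(n-1)})$, whose second summand is again in $\fm$ and is killed. Using $2\rho(\wH_0)=\mathrm{Trace}(\ad \wH_0|_{\fn_+})=(n-1)\cdot\tfrac{n}{n-1}=n$, the $\fa$-part contributes the scalar $x_j(n-\lambda)$.

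For step (iii), since $\fn_-$ is abelian and the exponential map \eqref{eqn:n-} identifies $dR(N_i^-)$ with $\partial/\partial x_i$ on $C^\infty(\R^{n-1})$, the $\fn_-$-contribution is
\begin{equation*}
-\bigl(dR((\Ad(\bar{n}^{-1})N_j^+)_{\fn_-})f\bigr)(\bar{n})=-\Bigl(-x_j\sum_i x_i\tfrac{\partial}{\partial x_i}\Bigr)f=x_jE_xf.
\end{equation*}
Adding the two contributions gives $d\pi_{\lambda^*}(N_j^+)=x_j\{(n-\lambda)+E_x\}$. The only real computational obstacle is the identity $[X,[X,N_j^+]]=-2x_jX$; the remaining steps are routine bookkeeping with the Gelfand--Naimark decomposition and the normalization of $\wH_0$.
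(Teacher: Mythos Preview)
Your proposal is correct and follows essentially the same route as the paper: apply formula \eqref{eqn:dpi3}, compute $\Ad(\bar{n}^{-1})N_j^+$ via the terminating expansion of $e^{-\ad X}$, project onto $\fl$ and $\fn_-$, and evaluate. The paper simply records the $\fl$- and $\fn_-$-components and the value $\lambda^*(E_{1,1}-E_{j+1,j+1})=n-\lambda$ without writing out the double bracket, whereas you explicitly verify $[X,[X,N_j^+]]=-2x_jX$ and decompose $E_{1,1}-E_{j+1,j+1}$ along $\fa\oplus\fm$; these are only expository differences, not a different method.
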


\begin{proof}
It follows from \eqref{eqn:dpi3} that $d\pi_{\lambda^*}(N_j^+)$ is given by
\begin{equation}
d\pi_{\lambda^*}(N_j^+)f(\bar{n})
=\lambda^*((\Ad(\bar{n}^{-1})N_j^+)_\fl)f(\bar{n})
-\big(dR((\Ad(\cdot^{-1})N_j^+)_{\fn_-})f\big)(\bar{n}).
\end{equation}
A direct computation shows that
\begin{align*}
(\Ad(\bar{n}^{-1})N_j^+)_\fl
&=x_j(E_{1,1}-E_{j+1,j+1})-\sum_{\substack{r=1\\r\neq j}}^{n-1}x_r N_r^-,\\
-(\Ad(\bar{n}^{-1})N_j^+)_{\fn_-}
&=x_j\sum_{r=1}^{n-1}x_rN_r^-.
\end{align*}

\noindent
Since $\lambda^*(E_{1,1}-E_{j+1,j+1}) = n-\lambda$ 
and $dR(N_r^-) = \frac{\partial}{\partial x_r}$ via 
the diffeomorphism \eqref{eqn:n-}, this shows the proposition.
\end{proof}

For later convenience, we next give the formula for $-\zeta_j\widehat{\dpi_{\lambda^*}}(N_j^+)$ instead of $\widehat{\dpi_{\lambda^*}}(N_j^+)$.
In the following, we silently extend the coordinate functions
$x_1, \ldots, x_{n-1}$ on $\fn_-(\R)$ in \eqref{eqn:dNj1}
holomorphically to the ones 
$z_1,\ldots, z_{n-1}$ on $\fn_-$ as in Section \ref{sec:dpi2}.

For $j \in \{1,\ldots, n-1\}$,
we write $\vartheta_j = \zeta_j \frac{\partial}{\partial \zeta_j}$ 
for the Euler operator for $\zeta_j$.
We also write $E_\zeta = \sum_{j=1}^{n-1} \vartheta_j$ for 
the Euler homogeneity operator for $\zeta$.
Observe that we have
$\widehat{E_z}=-((n-1)+E_\zeta)$.

\begin{prop}\label{prop:wdpi}
For $N_j^+ \in \fn_+$, we have
\begin{equation}\label{eqn:dNj2}
-\zeta_j
\widehat{\dpi_{\lambda^*}}(N_j^+)
=\vartheta_j(\lambda -1 + E_\zeta).
\end{equation}
\end{prop}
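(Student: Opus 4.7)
The plan is to apply the algebraic Fourier transform of Section \ref{sec:Weyl} directly to the formula for $\dpi_{\lambda^*}(N_j^+)$ given in Proposition \ref{prop:dNj1}, and then left-multiply the result by $-\zeta_j$. After extending the coordinates $x_1,\dots,x_{n-1}$ holomorphically to $z_1,\dots,z_{n-1}$, Proposition \ref{prop:dNj1} reads
\begin{equation*}
\dpi_{\lambda^*}(N_j^+) \;=\; z_j\bigl((n-\lambda) + E_z\bigr)
\end{equation*}
as an element of the Weyl algebra $\C[\fn_-;z,\tfrac{\partial}{\partial z}]$.

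Because $\widehat{\,\cdot\,}$ is a Weyl algebra isomorphism it preserves products of operators, so, using $\widehat{z_j}=\partial/\partial\zeta_j$ from \eqref{eqn:Weyl} together with the identity $\widehat{E_z}=-((n-1)+E_\zeta)$ recorded in Section \ref{sec:Weyl},
\begin{equation*}
\widehat{\dpi_{\lambda^*}}(N_j^+) \;=\; \widehat{z_j}\,\cdot\,\widehat{(n-\lambda)+E_z} \;=\; \frac{\partial}{\partial\zeta_j}\bigl((n-\lambda) - ((n-1)+E_\zeta)\bigr) \;=\; \frac{\partial}{\partial\zeta_j}\bigl(1-\lambda - E_\zeta\bigr).
\end{equation*}
Left-multiplying by $-\zeta_j$ and recognizing $\zeta_j\,\partial/\partial\zeta_j=\vartheta_j$ gives
\begin{equation*}
-\zeta_j\,\widehat{\dpi_{\lambda^*}}(N_j^+) \;=\; -\vartheta_j\bigl(1-\lambda - E_\zeta\bigr) \;=\; \vartheta_j(\lambda-1+E_\zeta),
\end{equation*}
which is the claim.

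The only point requiring care is noncommutativity: $\zeta_j$ does not commute with $E_\zeta$, since $[E_\zeta,\zeta_j]=\zeta_j$, so each expression above must be read as a strict left-to-right composition of operators rather than a product that may be rearranged through $E_\zeta$. Once this is observed there is no real obstacle. The reason for pre-multiplying by $-\zeta_j$ rather than quoting $\widehat{\dpi_{\lambda^*}}(N_j^+)$ directly is that the final form $\vartheta_j(\lambda-1+E_\zeta)$ is exactly the shape needed in Step 3 of the recipe in Section \ref{sec:recipe}: on homogeneous polynomials the F-system will decouple into an Euler-degree condition on $E_\zeta$ together with the vanishing of $\vartheta_j\psi$ on each coordinate, which is what drives the classification in Theorems \ref{thm:IDO1}--\ref{thm:IDO}.
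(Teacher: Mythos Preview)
Your proof is correct and follows exactly the approach indicated in the paper, which simply states that the result follows from a direct application of the algebraic Fourier transform \eqref{eqn:Weyl} to the formula \eqref{eqn:dNj1}. Your added remarks on noncommutativity and on the role of the pre-multiplication by $-\zeta_j$ are accurate and do not alter the argument.
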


\begin{proof}
This  follows from a direct application of 
the algebraic Fourier transform \eqref{eqn:Weyl} of Weyl algebras 
to \eqref{eqn:dNj1}.
\end{proof}

As $E_\zeta \vert_{\Pol^k(\fn_+)} = k\cdot \id$,
the operator
$-\zeta_j\widehat{\dpi_{\lambda^*}}(N_j^+)\vert_{\Pol^k(\fn_+)}$
restricted to $\Pol^k(\fn_+)$ is given by
\begin{equation}\label{eqn:dNj3}
-\zeta_j\widehat{\dpi_{\lambda^*}}(N_j^+)\vert_{\Pol^k(\fn_+)}
=(\lambda-1+k)\vartheta_j.
\end{equation}
In Step 3 (Section \ref{sec:step3}), we use \eqref{eqn:dNj3} to solve the F-system
in concern.

\begin{rem}\label{rem:Fmethod}
In general,
the Fourier transformed operator 
$\widehat{\dpi_{\lambda^*}}(N_j^+)$
is not necessarily reduced to a first order operator 
as in \eqref{eqn:dNj3}. In fact, in \cite{KP2, KOSS15}, such operators give rise to
the Jacobi differential equation and Gegenbauer differential equation.
\end{rem}

\subsection{Step 2: Classify and construct 
$\psi \in \Hom_{MA}(W^\vee, \Pol(\fn_+)\otimes V^\vee)$}
\label{sec:step2}

For $(\varpi, W) \in \Irr(M_0)_{\fin}$, we write
\begin{equation*}
W_{\beta} = \C_{\beta} \otimes W
\end{equation*}
for 
the representation
space of $(\beta, \varpi) \in \Irr(M)_{\fin}$. 
Then, in this step, we wish to classify and construct
\begin{equation*}
\psi \in \Hom_{MA}(W_{\beta}^\vee\boxtimes \C_{-\nu}, 
\Pol^k(\fn_+)_{\alpha}
\otimes \C_{-\lambda}).
\end{equation*}

\subsubsection{Notation}
\label{sec:step2a}

We start by introducing some notation. 
For $\mathbf{k} \in \Xi_k$, we write
\begin{equation*}
\zeta_{\mathbf{k}}= \zeta_1^{k_1} \cdots \zeta_{n-1}^{k_{n-1}}
\in \Pol^k(\fn_+).
\end{equation*}
We then define 
$\psi_k \in \Hom_\C(S^k(\C^{n-1}),\Pol^k(\fn_+))$ by
\begin{equation}\label{eqn:psi}
\psi_k = 
\sum_{\mathbf{k}\in \Xi_k}
\zeta_{\mathbf{k}} \otimes \widetilde{y}_{\mathbf{k}},
\end{equation}
where $\widetilde{y}_{\mathbf{k}}\in \Pol^k(\C^{n-1})\simeq S^k(\C^{n-1})^\vee$ are regarded as 
the dual basis of $e_{\mathbf{k}} \in S^k(\C^{n-1})$.

Recall from \eqref{eqn:sharp} that $MA$ acts on 
$\Pol(\fn_+)$ via the action $\Ad_{\#}$.
In the present case, $(\Ad_{\#}, \Pol^k(\fn_+))$ 
is an irreducible representation of $M_0 \simeq SL(n-1,\R)$, which is
equivalent to
\begin{equation}\label{eqn:M01}
(M_0, \Ad_{\#}, \Pol^k(\fn_+))
\simeq (M_0,\Ad^k_{\fn_-},S^k(\fn_-))
\simeq (SL(n-1,\R), \sym_{n-1}^k, S^k(\C^{n-1})).
\end{equation}
Since $\psi_k$ maps
$\psi_k\colon e_{\mathbf{k}} \mapsto \zeta_{\mathbf{k}}$, we have
\begin{equation}\label{eqn:M0}
\psi_k \in \Hom_{M_0}(S^k(\C^{n-1}),\Pol^k(\fn_+)).
\end{equation}

\subsubsection{Classification and construction of 
$\psi \in \Hom_{MA}(W^\vee, \Pol(\fn_+)\otimes V^\vee)$}
\label{sec:step2b}

As 
\begin{equation*}
\Hom_{MA}(W_{\beta}^\vee\boxtimes \C_{-\nu}, 
\Pol^k(\fn_+)_{\alpha}\otimes \C_{-\lambda})
\subset \Hom_{M_0}(W^\vee, \Pol^k(\fn_+)),
\end{equation*}
we first consider $\Hom_{M_0}(W^\vee, \Pol^k(\fn_+))$.

Since $(\sym_{n-1}^k, S^k(\C^{n-1}))^\vee \simeq (\poly_{n-1}^k,\Pol^k(\C^{n-1}))$,
it follows from \eqref{eqn:M01} that
the following two conditions on a representation 
$W$ of $M_0 \simeq SL(n-1,\R)$ are equivalent.
\begin{enumerate}
\item[(i)]
$W^\vee \simeq (\Ad_{\#}, \Pol^k(\fn_+))$.

\item[(ii)]
$W \simeq (\poly_{n-1}^k, \Pol^k(\C^{n-1}))$.
\end{enumerate}

\noindent
Now $\Hom_{M_0}(W^\vee, \Pol^k(\fn_+))$ is given as follows.

\begin{prop}\label{prop:M0}
The following three conditions on 
a representation $W$ of $M_0 \simeq SL(n-1,\R)$
are equivalent.
\begin{enumerate}
\item[\emph{(i)}]
$\Hom_{M_0}(W^\vee, \Pol^k(\fn_+)) \neq \{0\}$.

\item[\emph{(ii)}]
$\dim \Hom_{M_0}(W^\vee, \Pol^k(\fn_+))=1$.

\item[\emph{(iii)}]
$W \simeq (\poly_{n-1}^k, \Pol^k(\C^{n-1}))$.

\end{enumerate}
Consequently, we have
\begin{equation}\label{eqn:M02}
\Hom_{M_0}(W^\vee, \Pol^k(\fn_+))=
\begin{cases}
\C \psi_k & \text{if $(\varpi, W)\simeq (\poly_{n-1}^k, \Pol^k(\C^{n-1}))$,}\\
\{0\} & \text{otherwise.}
\end{cases}
\end{equation}
\end{prop}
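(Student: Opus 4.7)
My plan is to deduce the proposition from Schur's lemma, once both sides of the Hom-space have been identified as irreducible $M_0$-modules. The proof will be essentially an assembly of structural facts already recorded in the section.

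First, I would unpack the chain of isomorphisms in \eqref{eqn:M01}, which identifies $(\Ad_\#, \Pol^k(\fn_+))$ with the $M_0$-module $(\sym_{n-1}^k, S^k(\C^{n-1}))$, i.e., the $k$-th symmetric power of the standard representation of $M_0 \simeq SL(n-1,\R)$. For $n \geq 3$ this is irreducible by a classical fact, and for $n = 2$ it is the trivial one-dimensional representation (consistent with our convention $\sym_1^k = \triv$). Either way, $\Pol^k(\fn_+)$ is an irreducible $M_0$-module.

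Second, because $(\varpi, W) \in \Irr(M_0)_\fin$, the contragredient $(\varpi^\vee, W^\vee)$ is also irreducible. Schur's lemma then forces
\[
\dim\Hom_{M_0}(W^\vee,\Pol^k(\fn_+)) \in \{0,1\},
\]
with the value $1$ occurring exactly when $W^\vee \simeq \Pol^k(\fn_+)$ as $M_0$-modules. Dualizing this isomorphism and using $(\sym_{n-1}^k)^\vee \simeq \poly_{n-1}^k = \Pol^k(\C^{n-1})$, the latter condition is equivalent to $W \simeq (\poly_{n-1}^k, \Pol^k(\C^{n-1}))$. This yields the equivalence of (i), (ii), and (iii).

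Finally, for the explicit formula \eqref{eqn:M02}, when $W \simeq \Pol^k(\C^{n-1})$ I would invoke the fact already recorded in \eqref{eqn:M0} that $\psi_k$ belongs to $\Hom_{M_0}(S^k(\C^{n-1}), \Pol^k(\fn_+))$. Since $\psi_k(e_\mathbf{k}) = \zeta_\mathbf{k} \neq 0$, the map $\psi_k$ is a nonzero element of a one-dimensional Hom-space and therefore spans it. The main subtlety is simply ensuring the $M_0$-equivariance of $\psi_k$ via the matched identifications of bases in \eqref{eqn:M01}, but this is essentially a reformulation of the canonical $G$-invariance of the identity element in $V \otimes V^\vee$ and requires no further work.
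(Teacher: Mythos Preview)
Your proof is correct and follows essentially the same approach as the paper: invoke the irreducibility of $(\Ad_\#,\Pol^k(\fn_+))$ from \eqref{eqn:M01}, apply Schur's lemma together with the duality $(\sym_{n-1}^k)^\vee\simeq\poly_{n-1}^k$ (which is the ``preceding argument'' the paper refers to), and then use \eqref{eqn:M0} to exhibit $\psi_k$ as a nonzero generator. Your version is simply a more detailed unpacking of the same reasoning.
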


\begin{proof}
As the $M_0$-representation $(\Ad_{\#}, \Pol^k(\fn_+))$ is irreducible,
the first assertion follows from Schur's lemma and the preceding argument.
The second assertion \eqref{eqn:M02} is a simple consequence of
the first and \eqref{eqn:M0}.
\end{proof}

A direct computation shows that we have
\begin{equation*}\label{eqn:20240619}
(M,\Ad^k_{\fn_-},S^k(\fn_-))
\simeq (SL^{\pm}(n-1,\R), \sgn^{k}\otimes \sym_{n-1}^k, S^k(\C^{n-1})).
\end{equation*}
Here  we mean 
$\sgn^k$ by
$\sgn^k = 
\triv$ if $k \equiv 0 \Mod 2$;
$\sgn$ if $k \equiv 1 \Mod 2$.
Since $(M, \Ad_{\#}, \Pol^k(\fn_+)) \simeq (M,\Ad^k_{\fn_-},S^k(\fn_-))$,
it  implies
\begin{equation}\label{eqn:20240629}
(M, \Ad_{\#}, \Pol^k(\fn_+))
\simeq (SL^{\pm}(n-1,\R), \sgn^{k}\otimes \sym_{n-1}^k, S^k(\C^{n-1})).
\end{equation}

\begin{prop}\label{prop:sym}
We have
\begin{equation*}
\Hom_{MA}(W_{\beta}^\vee\boxtimes \C_{-\nu}, 
\Pol^k(\fn_+)_{\alpha}\otimes \C_{-\lambda})=
\begin{cases}
\C \psi_k & \text{if 
$(\alpha, \beta; \varpi; \lambda, \nu) = (\alpha, \alpha+k;
\poly_{n-1}^k;\lambda, \lambda+\frac{n}{n-1}k)$,}\\
\{0\} & \text{otherwise.}
\end{cases}
\end{equation*}
\end{prop}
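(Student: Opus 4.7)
The plan is to leverage Proposition \ref{prop:M0} to reduce the $MA$-equivariance condition to separate constraints on the $M_0$-action, the $A$-action, and the finite component group $M/M_0 \simeq \Z/2\Z$, and then verify each constraint yields the parameter matching asserted in the statement.

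First, I would observe that since $A$ is connected, the $MA$-invariant Hom space decomposes as
\begin{equation*}
\Hom_{MA}(W_{\beta}^\vee\boxtimes \C_{-\nu},\Pol^k(\fn_+)_{\alpha}\otimes \C_{-\lambda})
= \Hom_{M_0}(W^\vee,\Pol^k(\fn_+))^{M/M_0,\,A},
\end{equation*}
where the superscripts denote the equivariance constraints imposed by the quotient group $M/M_0=\{[I_n],[\gamma]\}$ and by $A$. Applying Proposition \ref{prop:M0} to the inner $M_0$-Hom space shows at once that it vanishes unless $\varpi\simeq\poly_{n-1}^k$, and in that case is spanned by $\psi_k$. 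Therefore, everything reduces to checking for which parameters $\psi_k$ itself intertwines the $A$-action and the $\gamma$-action.

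For the $A$-equivariance, I would compute that $A=\exp(\R\wH_0)$ acts on $\fn_+$ with weight $\tfrac{n}{n-1}$ by \eqref{eqn:nR}, so $\Pol^k(\fn_+)$ under $\Ad_{\#}$ carries the $A$-character $\chi^{-\frac{n}{n-1}k}$. Since $A$ acts trivially on $W^\vee$, matching the $A$-characters on the two sides gives the single scalar equation $-\nu=-\tfrac{n}{n-1}k-\lambda$, yielding $\nu=\lambda+\tfrac{n}{n-1}k$.

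For the $M/M_0$-equivariance, the main computation — and the step that requires the most care — is to track how $\gamma$ acts on both sides through the basis $\{e_{\mathbf k}\}$ and $\{\zeta_{\mathbf k}\}$. On $W^\vee=S^k(\C^{n-1})$ equipped with the canonical $M$-structure dual to $\poly_{n-1}^k$, the element $g(\gamma)=\diag(1,\ldots,1,-1)$ acts on $e_{\mathbf k}$ by the sign $(-1)^{k_{n-1}}$. On the other side, a direct computation of $\Ad(\gamma)$ on the basis $N_1^+,\ldots,N_{n-1}^+$ of $\fn_+$ shows $\Ad(\gamma)N_j^+=-N_j^+$ for $j<n-1$ and $\Ad(\gamma)N_{n-1}^+=N_{n-1}^+$; pushing this through the definition of $\Ad_\#$ gives that $\gamma$ acts on $\zeta_{\mathbf k}$ by $(-1)^{k-k_{n-1}}$. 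Incorporating the twists by $\C_\beta$ and $\C_\alpha$, the intertwining condition becomes $\beta\cdot(-1)^{k_{n-1}}=\alpha\cdot(-1)^{k-k_{n-1}}$, which collapses to $\beta=\alpha\cdot(-1)^k$, i.e., $\beta=\alpha+k$ in the paper's notation. (An alternative, essentially equivalent, route would be to simply read off the sign character from the isomorphism \eqref{eqn:20240629}, which already identifies $\Pol^k(\fn_+)$ as an $M$-module with $\sgn^k\otimes\sym_{n-1}^k$; combining this with the canonical $M$-structure on $\poly_{n-1}^k$ yields the same conclusion more economically.) Combining these three matching conditions proves the proposition.
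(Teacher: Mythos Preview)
Your argument is correct and follows essentially the same route as the paper: invoke Proposition~\ref{prop:M0} for the $M_0$-part, read the $A$-character from \eqref{eqn:nR}, and use \eqref{eqn:20240629} for the $M/M_0$-part. The only difference is that you spell out the $\gamma$-action on $\zeta_{\mathbf{k}}$ and $e_{\mathbf{k}}$ explicitly, whereas the paper simply cites \eqref{eqn:20240629}; your parenthetical ``alternative route'' \emph{is} the paper's proof.
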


\begin{proof}
It follows from \eqref{eqn:nR} that,
via the action $\Ad_{\#}$, 
the group $A=\exp(\R \wH_0)$ acts on $\Pol^k(\fn_+)$ by a character
$\chi^{-\frac{n}{n-1}k}$. 
Now Proposition \ref{prop:M0} and \eqref{eqn:20240629} conclude the assertion.
\end{proof}

In Proposition \ref{prop:sym}, the value of $\lambda \in \C$ is still arbitrary.
One approach to determine $\lambda \in \C$
for which 
$\Diff_G(I(\triv, \lambda)^\alpha, I(\varpi, \nu)^\beta) \neq \{0\}$
is to solve the square norm of the infinitesimal characters in question.
(See, for instance, \cite[Sect.\ 7]{BKZ09}.) In the following, we instead compute 
the $\fn_+$-invariance via the F-method to obtain such $\lambda \in \C$ more directly.

\subsection{Step 3: 
Solve the F-system 
for $\psi \in \Hom_{MA}(W^\vee, \Pol(\fn_+)\otimes V^\vee)$}
\label{sec:step3}

For $(\ga, \gb; \varpi; \lambda,\nu) \in \{\pm\}^2\times \Irr(SL(n-1,\R))_\fin \times 
\C^2$ and $k\in \Z_{\geq 0}$, we put
\begin{align*}
&\Sol^k(\fn_+; \triv_{\alpha, \lambda}, \varpi_{\beta, \nu})\\[3pt]
&:=
\{ \psi \in 
\Hom_{MA}(W_{\beta}^\vee\boxtimes \C_{-\nu}, 
\Pol^k(\fn_+)_{\alpha}\otimes \C_{-\lambda}): 
\text{
$\psi$ solves the F-system \eqref{eqn:Fsys2} below.}\}.
\end{align*}
\begin{equation}\label{eqn:Fsys2}
(\widehat{\dpi_{\lambda^*}}(N_j^+)\otimes \id_W)\psi =0 
\quad
\text{for all $j \in \{1,\ldots, n-1\}$}.
\end{equation}

Since
\begin{equation*}
\Sol^k(\fn_+; \triv_{\alpha, \lambda}, \varpi_{\beta, \nu})
\subset
\Hom_{MA}(W_{\beta}^\vee\boxtimes \C_{-\nu}, 
\Pol^k(\fn_+)_{\alpha}\otimes \C_{-\lambda}),
\end{equation*}
Proposition \ref{prop:sym} shows that
if $\Sol^k(\fn_+; \triv_{\alpha, \lambda}, \varpi_{\beta, \nu})\neq \{0\}$,
then $(\alpha, \beta; \varpi; \lambda, \nu)$ must satisfy
\begin{equation}\label{eqn:param}
(\alpha, \beta; \varpi; \lambda, \nu) = (\alpha, \alpha+k;
\poly_{n-1}^k;\lambda, \lambda+\frac{n}{n-1}k).
\end{equation}

\begin{thm}\label{thm:Sol1}
Let $(\alpha, \beta; \varpi; \lambda,\nu) \in \{\pm\}^2 \times \Irr(SL(n-1,\R))_{\fin}\times
\C^2$.
The following conditions on $(\alpha, \beta; \varpi; \lambda,\nu)$ are equivalent.
\begin{enumerate}
\item[\emph{(i)}] $\Sol^k(\fn_+; \triv_{\alpha, \lambda}, \varpi_{\beta, \nu}) \neq \{0\}$.
\item[\emph{(ii)}] One of the following two conditions holds.
\begin{enumerate}
\item[\emph{(ii-a)}]
$(\beta, \varpi, \nu) = (\alpha, \triv, \lambda)$.
\item[\emph{(ii-b)}]
$(\alpha, \beta;\varpi; \lambda, \nu) =  
(\alpha, \alpha+k;\poly_{n-1}^k; 1-k, 
1+\tfrac{k}{n-1}) $.
\end{enumerate}
\end{enumerate}
Further, the space $\Sol^k(\fn_+; \triv_{\alpha, \lambda}, \varpi_{\beta, \nu})$
is given as follows.
\begin{enumerate}
\item[\emph{(1)}] 
$(\beta, \varpi, \nu) = (\alpha, \triv, \lambda)$:
\begin{equation*}
\Sol^0(\fn_+; \triv_{\alpha, \lambda}, \triv_{\alpha, \lambda}) = \C.
\end{equation*}
\item[\emph{(2)}] 
$(\alpha, \beta;\varpi; \lambda, \nu) =  
(\alpha, \alpha+k;\poly_{n-1}^k; 1-k, 
1+\tfrac{k}{n-1}) $:
\begin{equation*}
\Sol^k(\fn_+; \triv_{\alpha, \lambda}, \varpi_{\beta, \nu})
=\C \psi_k,
\end{equation*}
where $\psi_k$ is the map defined in \eqref{eqn:psi}.
\end{enumerate}
\end{thm}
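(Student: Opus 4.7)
The plan is to leverage the two preparatory steps already carried out, namely Propositions \ref{prop:sym} and \ref{prop:wdpi}, and reduce the F-system to a scalar eigenvalue condition on the degree $k$. By Proposition \ref{prop:sym}, the ambient Hom space in which $\Sol^k(\fn_+; \triv_{\alpha,\lambda},\varpi_{\beta,\nu})$ lives is either zero or one-dimensional and spanned by $\psi_k$, with the latter occurring precisely when $(\alpha,\beta;\varpi;\nu)=(\alpha,\alpha+k;\poly_{n-1}^k;\lambda+\tfrac{n}{n-1}k)$. So any nonzero solution is a scalar multiple of $\psi_k$, and the only remaining question is: for which values of $\lambda$ does $\psi_k$ actually satisfy the F-system \eqref{eqn:Fsys2}?

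Next I would translate the F-system via Proposition \ref{prop:wdpi}. Since multiplication by $\zeta_j$ is injective on polynomials, $\widehat{d\pi_{\lambda^*}}(N_j^+)f=0$ for $f\in\Pol^k(\fn_+)$ is equivalent to $-\zeta_j\widehat{d\pi_{\lambda^*}}(N_j^+)f=0$, which by \eqref{eqn:dNj3} becomes $(\lambda-1+k)\vartheta_j f=0$. Applying this with $f=\psi_k(e_{\mathbf{k}'})=\zeta_{\mathbf{k}'}$ for all $\mathbf{k}'\in\Xi_k$ and all $j$, I compute $\vartheta_j\zeta_{\mathbf{k}'}=k'_j\zeta_{\mathbf{k}'}$. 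The F-system thus holds on the whole image of $\psi_k$ if and only if $(\lambda-1+k)k'_j=0$ for every $j$ and every $\mathbf{k}'\in\Xi_k$.

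From here the case split is immediate. If $k=0$, then $\Xi_0=\{(0,\ldots,0)\}$ so the system is trivially satisfied for every $\lambda$; combined with Proposition \ref{prop:sym} (where the parameter constraint collapses to $(\beta,\varpi,\nu)=(\alpha,\triv,\lambda)$), this gives case (ii-a) and the solution space is $\C\cdot 1=\C$. If $k\geq 1$, then some $k'_j$ is nonzero, forcing $\lambda=1-k$; substituting into $\nu=\lambda+\tfrac{n}{n-1}k$ yields $\nu=1+\tfrac{k}{n-1}$, and the parity of $\beta$ is already pinned down by Proposition \ref{prop:sym}, giving case (ii-b). In this regime $\psi_k$ solves the F-system, so $\Sol^k=\C\psi_k$.

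No real obstacle is anticipated, since the genuine structural input already resides in Propositions \ref{prop:sym} and \ref{prop:wdpi}; the remaining argument is a one-line eigenvalue match on $\vartheta_j$. The only minor point to be careful about is to justify passing from $\zeta_j\widehat{d\pi_{\lambda^*}}(N_j^+)f=0$ back to $\widehat{d\pi_{\lambda^*}}(N_j^+)f=0$, and to verify that exhausting $\mathbf{k}'$ over $\Xi_k$ with $k\geq 1$ produces at least one positive component $k'_j$ so that the factor $(\lambda-1+k)$ cannot be avoided.
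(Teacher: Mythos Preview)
Your proposal is correct and follows essentially the same argument as the paper: reduce to $\psi_k$ via Proposition \ref{prop:sym}, multiply the F-system by $\zeta_j$ to invoke \eqref{eqn:dNj3}, compute $\vartheta_j\zeta_{\mathbf{k}}=k_j\zeta_{\mathbf{k}}$, and conclude that the system holds iff $k=0$ or $\lambda=1-k$. Your explicit mention of the injectivity of multiplication by $\zeta_j$ and the observation that $k\geq 1$ guarantees some $k_j>0$ make the equivalence step slightly more transparent than in the paper, but the route is the same.
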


\begin{proof}
Observe that 
$\psi \in 
\Hom_{MA}(W_{\beta}^\vee\boxtimes \C_{-\nu}, 
\Pol^k(\fn_+)_{\alpha}\otimes \C_{-\lambda})$ solves
\eqref{eqn:Fsys2} if and only if it satisfies a system of PDEs
\begin{equation*}
(-\zeta_j\widehat{d\pi_{\lambda^*}}(N_j^+) \otimes \id_W)\psi_k=0
\quad
\text{for all $j\in \{1,\ldots, n-1\}$.}
\end{equation*}
By \eqref{eqn:psi}, we have
\begin{equation*}
(-\zeta_j\widehat{\dpi_{\lambda^*}}(N_j^+)\otimes \id_W)\psi_k 
=
\sum_{\mathbf{k}\in \Xi_k}
-\zeta_j\widehat{\dpi_{\lambda^*}}(N_j^+)(\zeta_{\mathbf{k}})
\otimes \widetilde{y}_{\mathbf{k}}.
\end{equation*}
So, one wishes to solve
\begin{equation}\label{eqn:dpiz}
-\zeta_j\widehat{\dpi_{\lambda^*}}(N_j^+)(\zeta_{\mathbf{k}})=0
\quad
\text{for all $j \in \{1,\ldots, n-1\}$ and $\mathbf{k} \in \Xi_k$}.
\end{equation}
It follows from \eqref{eqn:dNj3} that \eqref{eqn:dpiz} can be simplified to
\begin{equation}\label{eqn:theta}
(\lambda-1+k)\vartheta_j(\zeta_{\mathbf{k}}) =0
\quad
\text{for all $j \in \{1,\ldots, n-1\}$ and $\mathbf{k} \in \Xi_k$}.
\end{equation}
We have 
$(\lambda-1+k)\vartheta_j(\zeta_{\mathbf{k}}) 
=(\lambda-1+k)k_j\zeta_{\mathbf{k}}$.
Thus, \eqref{eqn:theta} holds if and only if $k=0$ or $\lambda = 1-k$.
Now the theorem follows from \eqref{eqn:param}.
\end{proof}

As for the $MA$-decomposition $\Pol(\fn_+) 
= \bigoplus_{k \in \Z_{\geq 0}} \Pol^k(\fn_+)$,
we put
\begin{equation}\label{eqn:20240506}
\Sol(\fn_+; \triv_{\alpha, \lambda}, \varpi_{\beta, \nu})
:=\bigoplus_{k \in \Z_{\geq 0}}
\Sol^k(\fn_+; \triv_{\alpha, \lambda}, \varpi_{\beta, \nu}).
\end{equation}
\noindent
Recall from \eqref{eqn:20240506b} that we have
\begin{equation*}
\Lambda^n_{G}=\{
(\alpha, \alpha+k;\poly_{n-1}^k; 1-k, 
1+\tfrac{k}{n-1} ): \alpha \in \{\pm\} \; \text{and} \; k\in \Z_{\geq 0}\}.
\end{equation*}
Corollary \ref{cor:Sol1} below is then a direct consequence of Theorem \ref{thm:Sol1}.

\begin{cor}\label{cor:Sol1}
For $(\alpha, \beta; \varpi; \lambda,\nu) \in \{\pm\}^2 \times \Irr(SL(n-1,\R))_{\fin}\times
\C^2$,
we have
\begin{equation*}
\Sol(\fn_+; \triv_{\alpha, \lambda}, \varpi_{\beta, \nu})
=
\begin{cases}
\C & \text{if $(\beta, \varpi, \nu) = (\alpha, \triv, \lambda)$,}\\
\C \psi_k & \text{if $(\alpha, \beta;\varpi; \lambda, \nu)\in \Lambda^n_{G}$,}\\
\{0\} & \text{otherwise.}
\end{cases}
\end{equation*}
Here $\psi_k$ is the map defined in \eqref{eqn:psi}.
\end{cor}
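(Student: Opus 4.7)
The plan is to deduce Corollary \ref{cor:Sol1} directly from Theorem \ref{thm:Sol1} by unpacking the definition \eqref{eqn:20240506}: since $\Sol(\fn_+;\triv_{\alpha,\lambda},\varpi_{\beta,\nu})$ is the direct sum of its homogeneous components $\Sol^k(\fn_+;\triv_{\alpha,\lambda},\varpi_{\beta,\nu})$ (coming from the $MA$-grading $\Pol(\fn_+)=\bigoplus_k\Pol^k(\fn_+)$), it suffices to show that for any fixed tuple $(\alpha,\beta;\varpi;\lambda,\nu)$ at most one index $k$ can contribute a nonzero summand, and then to read off the answer from cases (1) and (2) of Theorem \ref{thm:Sol1}.

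First I would dispose of the uniqueness-of-$k$ point. By Theorem \ref{thm:Sol1}, the only parameter values producing a nonzero $\Sol^k$ are condition (ii-a) (which by case (1) forces $k=0$) and condition (ii-b), which fixes $\varpi\simeq\poly_{n-1}^k$, $\lambda=1-k$, $\nu=1+\tfrac{k}{n-1}$, and $\beta=\alpha+k$. For $n\geq 3$ the representations $\{\poly_{n-1}^k\}_{k\geq 0}$ are pairwise inequivalent irreducibles of $SL(n-1,\R)$, so $\varpi$ determines $k$ uniquely. For $n=2$, $\poly_1^k=\triv$ regardless of $k$, but then the additional constraint $\lambda=1-k$ still singles out at most one $k$. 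In either case, the direct sum \eqref{eqn:20240506} collapses to a single summand.

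Next I would separate the three cases in the statement. If $(\beta,\varpi,\nu)=(\alpha,\triv,\lambda)$, only (ii-a) applies and only $k=0$ contributes, giving $\Sol^0=\C$ by case (1) of Theorem \ref{thm:Sol1}. If $(\alpha,\beta;\varpi;\lambda,\nu)\in\Lambda^n_G$, then by the definition of $\Lambda^n_G$ in \eqref{eqn:20240506b} the parameters match (ii-b) for a uniquely determined $k$, and case (2) of Theorem \ref{thm:Sol1} identifies the summand as $\C\psi_k$. In all remaining cases every $\Sol^k$ vanishes, so $\Sol=\{0\}$.

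The only minor subtlety is the overlap $k=0$, $\lambda=1$, where conditions (ii-a) and (ii-b) coincide. Here one checks that $\psi_0$ reduces to the scalar $1\in\Pol^0(\fn_+)\otimes(S^0(\C^{n-1}))^\vee$ under the identifications of Section \ref{sec:step2a}, so the lines $\C$ and $\C\psi_0$ are literally the same subspace of $\Hom_{MA}(\triv,\Pol^0(\fn_+))$; this confirms that no double counting occurs and the case analysis is consistent. No genuine obstacle is expected: the entire work is already contained in Theorem \ref{thm:Sol1}, and the corollary amounts to regrouping its conclusion through \eqref{eqn:20240506}.
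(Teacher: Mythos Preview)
Your proposal is correct and follows exactly the approach the paper intends: the paper simply records that Corollary \ref{cor:Sol1} is ``a direct consequence of Theorem \ref{thm:Sol1}'' without further argument, and your write-up spells out precisely the bookkeeping (uniqueness of the contributing degree $k$ via the $MA$-grading, and the overlap at $k=0$) that makes that one-line deduction honest.
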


\subsection{Step 4ab:
Apply $\symb^{-1}$ and $F_c^{-1} \otimes \id_W$ to the solutions $\psi$ to the F-system}
\label{sec:step4}

Observe that 
$\D_k$ in \eqref{eqn:IDO} and $\varphi_k$ in \eqref{eqn:Hom} are 
given by
\begin{alignat*}{3}
\D_k 
&=\sum_{\mathbf{k} \in \Xi_k}
\frac{\partial^k}{\partial x^{\mathbf{k}}} \otimes \widetilde{y}_{\mathbf{k}}
&&=\sum_{\mathbf{k} \in \Xi_k}
\symb^{-1}(\zeta_{\mathbf{k}}) \otimes \widetilde{y}_{\mathbf{k}}
&&= \symb^{-1}(\psi_k),\\
\varphi_k 
&= 
\sum_{\mathbf{k} \in \Xi_k} 
N_{\mathbf{k}}^-
\otimes 
\widetilde{y}_{\mathbf{k}}
&&=\sum_{\mathbf{k} \in \Xi_k} 
F_c^{-1}(\zeta_\mathbf{k})
\otimes 
\widetilde{y}_{\mathbf{k}}
&&=(F_c^{-1} \otimes \id_{W})(\psi_k).
\end{alignat*}
\noindent
Now we are ready to prove Theorems \ref{thm:IDO1} and \ref{thm:IDO}
and Theorems \ref{thm:Hom1} and \ref{thm:Hom}.

\begin{proof}[Proofs of Theorems \ref{thm:IDO1}, \ref{thm:IDO}, 
\ref{thm:Hom1}, and \ref{thm:Hom}]
By Theorem \ref{thm:abelian}, we have
\begin{align*}
\Diff_G(I(\triv, \lambda)^\ga, I(\varpi,  \nu)^\beta)
&=
\symb^{-1}(\Sol(\fn_+; \triv_{\alpha, \lambda}, \varpi_{\beta, \nu})),\\
\Hom_{\fg, P}(M_\fp(\varpi^\vee,-\nu)^\ga, M_\fp(\triv, -\lambda)^\gb)
&=
(F_c^{-1}\otimes \id_{W})(\Sol(\fn_+; \triv_{\alpha, \lambda}, \varpi_{\beta, \nu})).
\end{align*}
Since $(\poly_{n-1}^k)^\vee \simeq \sym_{n-1}^k$, 
the theorems in consideration follow from Corollary \ref{cor:Sol1}.
\end{proof}

\subsection{Classification and construction of $\fg$-homomorphisms}
\label{sec:gHom}

Theorem \ref{thm:Hom} concerns $(\fg, P)$-homomorphisms between 
generalized Verma modules. 
Then we finish this section by showing the classification and construction of 
$\fg$-homomorphisms.

Let $P_0$ be the identity component of the parabolic subgroup $P$. 
Then we have $P_0 = M_0AN_+$. Thus, $\Irr(P_0)_\fin$ is given by
\begin{equation*}
\Irr(P_0)_\fin \simeq \Irr(M_0)_\fin \times \Irr(A) 
\simeq \Irr(\f{sl}(n-1,\C))_{\fin} \times \C \simeq \Irr(\fp)_{\fin}.
\end{equation*}

For $(\sigma, s) \in \Irr(\f{sl}(n-1,\C))_\fin\times \C$, 
we define a generalized Verma module $M_\fp(\sigma,s)$ as in \eqref{eqn:Verma}
as a $\fg$-module. For $(\varpi; \lambda, \nu) \in \Irr(\f{sl}(n-1,\C))_\fin\times \C^2$, 
we let
\begin{equation*}
\Sol(\fn_+; \triv_{\lambda}, \varpi_{\nu})
=
\{ \psi \in 
\Hom_{M_0A}(W^\vee\boxtimes \C_{-\nu}, 
\Pol(\fn_+)\otimes \C_{-\lambda}): 
\text{
$\psi$ solves the F-system \eqref{eqn:Fsys2}.}\}.
\end{equation*}
Then, by Remark \ref{rem:Hom}, we have
\begin{equation}\label{eqn:HD0}
F_c \otimes \id_W \colon
\Hom_\fg(M_\fp(\varpi^\vee, -\nu), M_\fp(\triv,-\lambda))
\stackrel{\sim}{\To}
\Sol(\fn_+; \triv_{\lambda}, \varpi_{\nu}).
\end{equation}

Define $\Lambda^n_{\fg} \subset 
\Irr(\f{sl}(n-1,\C))_\fin \times \C^2$ as
\begin{equation*}
\Lambda^n_{\fg}=\{
(\sym_{n-1}^k; k-1, -(1+\tfrac{k}{n-1})) :  k\in \Z_{\geq 0}\}.
\end{equation*}

\begin{thm}\label{thm:Hom2}
We have
\begin{equation*}
\Hom_{\fg}(M_\fp(\sigma,r), M_\fp(\triv, s))
=
\begin{cases}
\C\id & \text{if $(\sigma, r) = (\triv, s)$,}\\
\C \varphi_k & \text{if $(\sigma; s, r)\in \gL_{\fg}^n$,}\\
\{0\} & \text{otherwise.}
\end{cases}
\end{equation*}
\end{thm}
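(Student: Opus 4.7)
The plan is to run the F-method recipe from Section \ref{sec:recipe} once more, using the isomorphism \eqref{eqn:HD0} to reduce the classification of $\fg$-homomorphisms to the determination of a solution space. Concretely, via
\[
F_c \otimes \id_W \colon \Hom_\fg(M_\fp(\varpi^\vee, -\nu), M_\fp(\triv,-\lambda))
\stackrel{\sim}{\To} \Sol(\fn_+; \triv_{\lambda}, \varpi_{\nu}),
\]
it suffices to compute $\Sol(\fn_+; \triv_\lambda, \varpi_\nu)$ in the present $M_0A$-equivariant (as opposed to $MA$-equivariant) setting, and then to translate the answer through $\sigma = \varpi^\vee$, $s = -\lambda$, $r = -\nu$.

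First, Step 1 of the recipe is unchanged: Proposition \ref{prop:wdpi} gives $-\zeta_j \widehat{d\pi_{\lambda^*}}(N_j^+)|_{\Pol^k(\fn_+)} = (\lambda - 1 + k)\vartheta_j$. For Step 2, I would redo Section \ref{sec:step2} replacing $M$ by $M_0$. The $M_0$-analysis of Proposition \ref{prop:M0} already gives us the nonzero condition $W \simeq (\poly_{n-1}^k, \Pol^k(\C^{n-1}))$ together with $\dim\Hom_{M_0}(W^\vee, \Pol^k(\fn_+))=1$, and the $A$-character on $\Pol^k(\fn_+)$ is $\chi^{-\frac{n}{n-1}k}$; so the $M_0A$-constraint forces $\varpi = \poly_{n-1}^k$ and $\nu = \lambda + \tfrac{n}{n-1}k$. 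The sign parameter $\alpha \in \{\pm\}$ that appeared in Proposition \ref{prop:sym} simply disappears, because $M/M_0 \simeq \Z/2\Z$ is no longer involved.

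Step 3 is then identical to the proof of Theorem \ref{thm:Sol1}: the F-system reduces to $(\lambda - 1 + k)k_j \zeta_{\mathbf{k}} = 0$ for all $j$ and all $\mathbf{k}\in \Xi_k$, which holds precisely when $k=0$ or $\lambda = 1-k$. Combining this with the $M_0A$-constraints yields the analogue of Corollary \ref{cor:Sol1},
\[
\Sol(\fn_+; \triv_\lambda, \varpi_\nu)
= \begin{cases}
\C & \text{if $(\varpi, \nu) = (\triv, \lambda)$,}\\
\C\psi_k & \text{if $(\varpi; \lambda, \nu) = (\poly_{n-1}^k; 1-k, 1+\tfrac{k}{n-1})$ with $k \geq 1$,}\\
\{0\} & \text{otherwise,}
\end{cases}
\]
with $\psi_k$ as in \eqref{eqn:psi}.

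Finally, translating through \eqref{eqn:HD0} with $\sigma = \varpi^\vee$, $s = -\lambda$, $r = -\nu$, and using $(\poly_{n-1}^k)^\vee \simeq \sym_{n-1}^k$, the nontrivial locus becomes $(\sigma; s, r) = (\sym_{n-1}^k; k-1, -(1+\tfrac{k}{n-1})) \in \gL_\fg^n$, while $k=0$ recovers $(\sigma, r) = (\triv, s)$. The computation in Section \ref{sec:step4} identifies $(F_c^{-1}\otimes \id_W)(\psi_k) = \varphi_k$, giving the asserted generator. The only conceptual point to watch is that $P_0 = M_0AN_+$ is connected, so the $(\fg, P_0)$-homomorphism picture is genuinely equivalent (via Remark \ref{rem:Hom}) to the $\fg$-homomorphism picture used in \eqref{eqn:HD0}; with $\fn_+$ abelian, no further obstacle arises.
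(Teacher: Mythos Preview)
Your proposal is correct and follows exactly the approach of the paper's own proof, which simply says ``By \eqref{eqn:HD0}, it suffices to compute $\Sol(\fn_+; \triv_{\lambda}, \varpi_{\nu})$; the assertion then follows from the same arguments in Steps 1--4.'' You have spelled out those steps in detail (correctly noting that the only change is replacing $MA$ by $M_0A$, which drops the sign parameter), but the structure and all ingredients are identical.
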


\begin{proof}
By \eqref{eqn:HD0}, it suffices to compute
$\Sol(\fn_+; \triv_{\lambda}, \varpi_{\nu})$.
Then the assertion simply follows from the same arguments in 
Step 1 (Section \ref{sec:step1}) -- Step 4 (Section \ref{sec:step4}).
\end{proof}

\begin{cor}\label{cor:Red}
The following are equivalent on $s \in \C$.
\begin{enumerate}
\item[\emph{(i)}] $M_\fp(\triv, s)$ is reducible.
\item[\emph{(ii)}] $s \in \Z_{\geq 0}$.
\end{enumerate}
\end{cor}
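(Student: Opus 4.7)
The plan is to characterize reducibility of $M := M_\fp(\triv, s)$ by the existence of nonzero $\fg$-homomorphisms into $M$ from other generalized Verma modules, and then match that criterion against Theorem \ref{thm:Hom2}. Precisely, $M$ is reducible if and only if there is a nonzero $\fg$-homomorphism $M_\fp(\sigma, r) \to M$ whose image is a proper nonzero submodule; this reformulation uses that $M$ is a cyclic highest-weight module generated by $1 \otimes 1$.

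For the direction (ii) $\Rightarrow$ (i), set $k := s + 1 \geq 1$ when $s \in \Z_{\geq 0}$. The triple $(\sym^k_{n-1}; s, -(1+\tfrac{k}{n-1}))$ then lies in $\Lambda^n_\fg$, so Theorem \ref{thm:Hom2} provides a nonzero $\fg$-homomorphism $\varphi_k \colon M_\fp(\sym^k_{n-1}, -(1+\tfrac{k}{n-1})) \to M$. From the explicit formula \eqref{eqn:Hom}, each element $\varphi_k(1\otimes w)$ has $\fn_-$-degree $k$ in $M \cong \Sym(\fn_-) \otimes \C_s$, so $\Im(\varphi_k)$ is contained in the subspace of elements of $\fn_-$-degree $\geq k \geq 1$. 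Since the cyclic generator $1 \otimes 1$ lies outside this subspace, $\Im(\varphi_k)$ is a proper nonzero submodule, which proves reducibility.

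For the converse (i) $\Rightarrow$ (ii), take a nonzero proper submodule $N \subsetneq M$. Because $\fn_+$ is abelian, $[\fn_+,\fn_-] \subset \fl$, and $\fn_+$ annihilates the top space $\C_s$, the action of $\fn_+$ strictly lowers the natural $\fn_-$-grading on $M$; a minimal-degree argument then produces a nonzero $\fn_+$-invariant vector in $N$. Choosing an irreducible $\fp$-subrepresentation of $N^{\fn_+}$ and invoking Theorem \ref{thm:duality} (i.e.\ Frobenius reciprocity) yields a nonzero $\fg$-homomorphism $M_\fp(\sigma, r) \to M$ with image inside $N$, and in particular not surjective. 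By Theorem \ref{thm:Hom2} the parameters must lie in $\Lambda^n_\fg$ with $k \geq 1$ (the $k = 0$ case corresponds to the identity, which is surjective), forcing $s = k - 1 \in \Z_{\geq 0}$.

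The main obstacle is the minimal-degree/singular-vector step in the converse: one must verify carefully that $\fn_+$ acts on $M$ by strictly decreasing the $\fn_-$-filtration and that consequently every nonzero submodule meets $M^{\fn_+}$ nontrivially. Once this is in hand, the result is a direct translation of Theorem \ref{thm:Hom2}.
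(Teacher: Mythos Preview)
Your argument is correct and follows essentially the same route as the paper's proof, which simply asserts (without unpacking) that $M_\fp(\triv, s)$ is reducible if and only if there exists $(\sigma, r) \neq (\triv, s)$ with $\Hom_\fg(M_\fp(\sigma, r), M_\fp(\triv, s)) \neq \{0\}$, and then reads off the answer from Theorem~\ref{thm:Hom2}; you have supplied the details of that equivalence via the singular-vector and grading arguments. One small correction: the Frobenius reciprocity you need is the tensor--hom adjunction for the induced module $\Cal{U}(\fg)\otimes_{\Cal{U}(\fp)}(-)$, not Theorem~\ref{thm:duality}, which is about the passage from Verma-module homomorphisms to intertwining differential operators.
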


\begin{proof}
Observe that $M_\fp(\triv,s)$ is reducible if and only if there exists 
$(\sigma,r) \in \Irr(\fp)_\fin\backslash\{(\triv,s)\}$ 
such that $\Hom_{\fg}(M_\fp(\sigma,r), M_\fp(\triv,s)) \neq \{0\}$.
Now the assertion follows from Theorem \ref{thm:Hom2}.
\end{proof}

\begin{rem}
The results of Corollary \ref{cor:Red} is already known in the literature.
See, for instance,
\cite[Ex.\ 5.2]{BX21},
\cite[Thm.\ 1.1]{He15}, and
\cite[p.\ 794, Table 1]{HKZ19}.
\end{rem}

\section{$K$-type formulas for $\Ker(\D_k)^\ga$ and $\Im(\D_k)^\ga$}
\label{sec:Sol}

The aim of this section is to classify the $K$-type formulas of 
the kernel $\Ker(\D_k)^\ga$ and image $\Im(\D_k)^\ga$ of the 
non-zero intertwining differential operator
\begin{equation*}
\D_k \colon I(\triv, 1-k)^\ga \To I(\poly_{n-1}^k,1+\tfrac{k}{n-1})^{\ga+k}.
\end{equation*}
The $K$-type formulas are obtained in Corollary \ref{cor:IDO4}.
We continue the notation and normalization from Section \ref{sec:SLn},
unless otherwise specified.

Although the main idea in this section works 
for the case $n=2$,
to avoid the complication of the exposition,
we constraint ourselves to the
case $n\geq 3$.

\subsection{Composition factors and $K$-type structure of $I(\triv, \lambda)^\ga$}

Let $K=SO(n)$ be a maximal compact subgroup of $G=SL(n,\R)$. Then we have
\begin{align*}
K\cap M =
& S(O(1)\times O(n-1))\\
&=\left\{\begin{pmatrix}
\det(g)^{-1} & \\
 & g
\end{pmatrix}
: g \in O(n-1)
 \right\}\\[3pt]
&\simeq O(n-1).
\end{align*}
We remark that $K\cap M/(K\cap M)_0 \simeq M/M_0 \simeq \Z/2\Z$.

For a representation $V$ of $G$, we denote by $V_K$ the space of $K$-finite vectors of $V$. Since $G=KP$, as $K$-representations, we have
\begin{equation*}
I(\triv, \lambda)^\ga_K
\simeq 
\Ind_{K\cap M}^K(\alpha)_K.
\end{equation*}

Let  $\Cal{H}^m(\R^n)$ be the irreducible representation of $K$ consisting 
of spherical harmonics on $S^{n-1} \subset \R^n$ of 
homogeneous degree $m$ (cf.\ \cite[Sect.\ 7.5]{KM11}).
It is known that the $K$-type decomposition of 
$I(\triv, \lambda)^\ga_K\simeq \Ind_{K\cap M}^K(\alpha)_K$ is given as follows
(see, for instance, \cite[p.\ 286]{HL99}).
\begin{align}\label{eqn:K-type1}
I(\triv, \lambda)^\ga_K
&\simeq 
\bigoplus_{\stackrel{m \in \Z_{\geq 0}}{ \alpha = (-1)^m}}
\Cal{H}^m(\R^{n}) \nonumber \\[5pt]
&=
\begin{cases}
\bigoplus_{\ell \in \Z_{\geq 0}} \Cal{H}^{2\ell}(\R^{n}) & \text{if $\alpha =+$,}\\[5pt]
\bigoplus_{\ell \in \Z_{\geq 0}} \Cal{H}^{2\ell+1}(\R^{n}) & \text{if $\alpha =-$.}
\end{cases}
\end{align}

Theorem \ref{thm:vDM} below states well-known facts on 
the irreducibility and composition series of $I(\triv, \lambda)^\ga$.
For the proof, see, for instance,
\cite{HL99, vDM99} for $\alpha = \pm$ and \cite{MS14} for $\alpha =+$.

\begin{thm}\label{thm:vDM}
Let $n\geq 3$.
For $\ga \in \{\pm\} \equiv \{\pm 1 \}$ and $\lambda \in \C$, 
$I(\triv, \lambda)^\ga$ enjoys the following.

\begin{enumerate}
\item[\emph{(1)}]
The induced representation $I(\triv, \lambda)^\ga$ is irreducible except the following
two cases.
\vskip 0.05in
\begin{enumerate}
\item[\emph{(A)}]
$\lambda \in -\Z_{\geq 0}$ and $\alpha = (-1)^\lambda$.
\vskip 0.05in

\item[\emph{(B)}]
$\lambda \in n+\Z_{\geq 0}$ and $\alpha = (-1)^{\lambda+n}$.
\end{enumerate}

\vskip 0.1in

\item[\emph{(2)}]
For Case (A) with $\lambda = -m$, there exists 
a finite-dimensional irreducible subrepresentation
$F(-m)^\ga \subset I(\triv, -m)^\ga$ such that 
$I(\triv, -m)^\ga/F(-m)^\ga$ is irreducible and 
infinite-dimensional. The $K$-type formulas of $F(-m)^\ga_K=F(-m)^\ga$ 
are given as follows.
\begin{equation*}
F(-m)^+_K \simeq
\bigoplus_{\ell = 0}^{m/2}
\Cal{H}^{2\ell}(\R^n) 
\quad \text{and} \quad
F(-m)^-_K \simeq
\bigoplus_{\ell = 0}^{(m-1)/2}
\Cal{H}^{2\ell+1}(\R^n)
\end{equation*}

\item[\emph{(3)}]
For Case (B) with $\lambda =n+m$, 
there exists an infinite-dimensional
irreducible subrepresentation
$T(n+m)^\ga \subset I(\triv,n+m)^\ga$ such that 
$I(\triv, n+m)^\ga/T(n+m)^\ga$
is irreducible and finite-dimensional.
The $K$-type formulas of $T(n+m)^\ga_K$ are given as follows.
\begin{equation*}
T(n+m)^+_K \simeq
\bigoplus_{\ell \geq  \frac{m+2}{2}}
\Cal{H}^{2\ell}(\R^n) 
\quad \text{and} \quad
T(n+m)^-_K \simeq
\bigoplus_{\ell \geq \frac{m+1}{2}}
\Cal{H}^{2\ell+1}(\R^n)
\end{equation*}

\item[\emph{(4)}]
For $m \in \Z_{\geq 0}$,
we have the non-split exact sequences of Fr{\'e}chet $G$-modules:
\begin{alignat*}{4}
\{0\} &\To F(-m)^\ga &&\To I(\triv, -m)^\ga &&\To T(n+m)^\ga &&\To \{0\},\\
\{0\} &\To T(n+m)^\ga &&\To I(\triv, n+m)^\ga &&\To F(-m)^\ga &&\To \{0\}.
\end{alignat*}
\end{enumerate}
\end{thm}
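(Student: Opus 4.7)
The plan is to combine a multiplicity-one $K$-type computation with the classification of intertwining differential operators from Theorem~\ref{thm:IDO} and a duality argument. First, I would establish \eqref{eqn:K-type1}: since $G=KP$, one has $I(\triv,\lambda)^\alpha\vert_K\simeq \Ind_{K\cap M}^K(\alpha)$, independent of $\lambda$. Combining Frobenius reciprocity with the classical fact that $\Cal{H}^m(\R^n)\vert_{O(n-1)}$ contains the character $(\det)^{(-1)^m}$ with multiplicity one (and no other one-dimensional summand) yields \eqref{eqn:K-type1}, each $K$-type appearing with multiplicity one. This multiplicity-freeness is the key structural input: any $\fg$-invariant subspace must be a union of full $K$-types $\Cal{H}^m(\R^n)$.

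Next, I would realize the finite-dimensional submodule $F(-m)^\alpha\subset I(\triv,-m)^\alpha$ of Case (A) concretely as the space $\Pol^m(\R^n)$ of homogeneous polynomials of degree $m$ on $\R^n$, embedded via $f_p(g):=p(g\cdot e_1)$ where $e_1=(1,0,\ldots,0)^T$. A direct check using $p\cdot e_1=\det(g_0)^{-1}a_{11}\,e_1$ for $p=m_0 a n_+\in P$ (with $m_0=\diag(\det(g_0)^{-1},g_0)$) shows that $f_p$ satisfies the $P$-equivariance defining $I(\triv,-m)^\alpha$ precisely when $\alpha=(-1)^m$; for $\alpha\neq (-1)^m$, this embedding fails and the parity of the $K$-types in \eqref{eqn:K-type1} leaves no room for any finite-dimensional proper submodule, yielding irreducibility of $I(\triv,-m)^\alpha$ in the mismatched case. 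The classical harmonic decomposition
\[
\Pol^m(\R^n)\vert_{SO(n)}\simeq \bigoplus_{\ell=0}^{\lfloor m/2\rfloor}\Cal{H}^{m-2\ell}(\R^n)
\]
then yields the asserted $K$-type formulas for $F(-m)^\alpha$. That $F(-m)^\alpha$ is actually $G$-stable (and not merely $K$-stable) would follow by applying the operator $\D_{m+1}$ from Theorem~\ref{thm:IDO} and verifying $\D_{m+1}(F(-m)^\alpha)=0$ via the explicit formula \eqref{eqn:IDO}; multiplicity-freeness then forces $\Ker(\D_{m+1})=F(-m)^\alpha$.

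Parts (3) and (4) will follow by the duality pairing $I(\triv,\lambda)^\alpha\times I(\triv,n-\lambda)^\alpha\to\C$ obtained from integration on $G/P$, which identifies the two Harish-Chandra modules as mutual contragredients. Dualizing the short exact sequence from (2) yields the one in (3), with $T(n+m)^\alpha=F(-m)^{\alpha,\perp}$ and its $K$-type formula the complement of that of $F(-m)^\alpha$ inside \eqref{eqn:K-type1}. Non-splitness follows from uniqueness: the multiplicity-one $K$-type structure prevents two disjoint copies of $F(-m)^\alpha$ from coexisting as submodules, so the extension cannot split. Irreducibility of $I(\triv,\lambda)^\alpha$ outside Cases (A) and (B) reduces, via the shift-operator analysis for $\fs=\fg/\fk\simeq \Cal{H}^2(\R^n)$ acting on the $K$-type tower by $m\mapsto m\pm 2$, to verifying that the polynomial-in-$\lambda$ shift coefficients between consecutive $K$-types vanish only at the listed $\lambda$, which is a finite computation. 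The main technical obstacle I anticipate is the verification $\D_{m+1}(F(-m)^\alpha)=0$: this requires translating the polynomial realization $f_p(g)=p(ge_1)$ into the $N_-$-coordinates used in \eqref{eqn:IDO} and evaluating the differential operator on a lowest weight vector of $F(-m)^\alpha$, but the computation is essentially mechanical once the setup is translated correctly.
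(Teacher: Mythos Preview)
The paper does not prove Theorem~\ref{thm:vDM}; it is quoted as a known result with references to Howe--Lee \cite{HL99}, van Dijk--Molchanov \cite{vDM99}, and M{\"o}llers--Schwarz \cite{MS14}. So your proposal is not to be compared against the paper's proof but is an attempt to supply an argument the paper deliberately omits.

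Your sketch contains the right pieces, and the polynomial realization $f_p(g)=p(ge_1)$ of $F(-m)^\alpha$ is correct (in the $N_-$-coordinates of \eqref{eqn:n-} one has $\bar n e_1=(1,x_1,\dots,x_{n-1})^T$, so $f_p$ restricts to a polynomial of degree $\le m$ in $x$ and indeed $\D_{m+1}f_p=0$). But two points are confused. First, $G$-stability of $F(-m)^\alpha$ is immediate from the definition: left translation by $h$ sends $f_p$ to $f_{p\circ h^{-1}}$, and $p\circ h^{-1}\in\Pol^m(\R^n)$. You do not need $\D_{m+1}$ for this. Second, and more seriously, the assertion ``multiplicity-freeness then forces $\Ker(\D_{m+1})=F(-m)^\alpha$'' is a non sequitur. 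Multiplicity-freeness only says that any $G$-submodule is a union of full $K$-types; it does not prevent $\Ker(\D_{m+1})$ from containing \emph{more} $K$-types than $F(-m)^\alpha$. Knowing $\Ker(\D_{m+1})=F(-m)^\alpha$ is essentially equivalent to knowing that $I(\triv,-m)^\alpha/F(-m)^\alpha$ is irreducible, which is precisely what you are trying to prove. In fact the paper establishes $\Ker(\D_{m+1})=F(-m)^\alpha$ in Theorem~\ref{thm:IDO3} by \emph{using} Theorem~\ref{thm:vDM}, so your logic here runs backwards relative to the paper.

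The genuine content of the theorem---irreducibility outside Cases (A) and (B), and irreducibility of the quotients inside them---is exactly the shift-operator computation you mention in passing at the end: determine how $\fg/\fk\simeq\Cal{H}^2(\R^n)$ maps between adjacent $K$-types $\Cal{H}^m(\R^n)$ and $\Cal{H}^{m\pm 2}(\R^n)$, and locate the zeros of the resulting scalars as functions of $\lambda$. That is not ``a finite computation'' one can wave at; it is the substance of the cited papers of Howe--Lee and van Dijk--Molchanov. Everything else in your plan (the $K$-type decomposition, the duality pairing between $\lambda$ and $n-\lambda$, non-splitness via multiplicity one) is routine once that computation is carried out.
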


\begin{rem}\label{rem:vDM1}
In \cite[Thm.\ 1.1]{vDM99}, the condition ``$\mu < 2-n$'' in b) should be read as 
``$\mu < 1-n$''. For $\mu=1-n$, the induced representation 
$\pi^{\pm}_{1-n,\nu}$
in the cited paper is irreducible, as it is dual to the case of $\mu = -1$.
\end{rem}

\begin{rem}\label{rem:vDM2}
The parabolic subgroup $P_{n-1,1}$ corresponding to the partition
$n=(n-1)+1$ is considered in \cite{HL99} and \cite{vDM99},
while we consider the one corresponding to $n=1+(n-1)$ in this paper.
Thus, the complex parameters ``$\mu$'' in \cite{vDM99}, ``$\ga$'' in \cite{HL99}, and
``$\lambda$'' in this paper are related as $\mu = \alpha = -\lambda$. 
\end{rem}

Now, for $k \in \Z_{\geq 0}$, 
we consider the $K$-type formulas of the kernel 
$\Ker(\D_k)^\alpha_K$ and image $\Im(D_k)^\alpha_K$ of the intertwining 
differential operator 
\begin{equation*}
\D_k \colon 
I(\triv, 1-k)^\ga \To I(\poly^k_{n-1}, 1+\tfrac{k}{n-1})^{\ga+k}.
\end{equation*}

If $k=0$, then 
$I(\triv, 1-k)^\ga = I(\poly^k_{n-1}, 1+\tfrac{k}{n-1})^\ga = I(\triv, 1)^\ga$
and $\D_0 = \id$. Thus, in this case, we have 
\begin{equation*}
\Ker(\D_0)^\alpha = \{0\}
\quad
\text{and}
\quad
\Im(\D_0)^\alpha = I(\triv, 1)^\ga.
\end{equation*}
\noindent
Therefore,
the $K$-type formula $\Im(\D_0)^\alpha_K$ is given as in \eqref{eqn:K-type1}.
For $k \in 1 + \Z_{\geq 0}$, we have the following.

\begin{thm}\label{thm:IDO3}
For 
$\ga \in \{\pm\} \equiv \{\pm 1\}$ and
$k \in 1 + \Z_{\geq 0}$,
the kernel $\Ker(\D_k)^\alpha$ is given as follows.
\begin{equation*}
\Ker(\D_k)^\alpha = 
\begin{cases}
F(1-k)^\ga & \text{if $\alpha = (-1)^{1-k}$,}\\[3pt]
\{0\} & \text{otherwise}.
\end{cases}
\end{equation*}
In particular, the composition factors $F(1-k)^\ga$ and 
$T(n+k-1)^\ga \simeq I(\triv,1-k)^\ga/F(1-k)^\ga$ of $I(\triv, 1-k)^\ga$ 
can be realized as
\begin{equation*}
F(1-k)^\ga = \Ker(\D_k)^\alpha
\quad
\text{and}
\quad
T(n+k-1)^\ga \simeq \Im(\D_k)^\alpha.
\end{equation*}

\end{thm}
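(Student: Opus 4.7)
The proof splits into two cases depending on whether $I(\triv,1-k)^\ga$ is reducible. By Theorem~\ref{thm:vDM}(1), for $k\in 1+\Z_{\geq 0}$ we have $\lambda=1-k\in-\Z_{\geq 0}$, so reducibility is equivalent to $\ga=(-1)^{1-k}$. When $\ga\neq(-1)^{1-k}$, the module $I(\triv,1-k)^\ga$ is irreducible; since $\D_k$ is a non-zero continuous $G$-intertwiner between Fr{\'e}chet $G$-modules, its kernel is a closed proper $G$-invariant subspace, hence $\{0\}$. This settles the ``otherwise'' case.

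When $\ga=(-1)^{1-k}$, Theorem~\ref{thm:vDM}(2) and (4) tell us $I(\triv,1-k)^\ga$ has length two, with $F(1-k)^\ga$ the unique non-zero proper closed $G$-invariant subspace and with quotient $I(\triv,1-k)^\ga/F(1-k)^\ga\simeq T(n+k-1)^\ga$. Consequently $\Ker(\D_k)^\ga$ lies in $\{\{0\},\, F(1-k)^\ga,\, I(\triv,1-k)^\ga\}$, and the third option is excluded because $\D_k\neq 0$. It therefore suffices to exhibit one non-zero $f\in F(1-k)^\ga$ with $\D_k f=0$, forcing $\Ker(\D_k)^\ga=F(1-k)^\ga$.

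The plan for producing such an $f$ is to realize $I(\triv,1-k)^\ga$ as sections of the associated line bundle over $\RP^{n-1}$ and to identify $F(1-k)^\ga$ with the space of homogeneous polynomials of degree $k-1$ on $\R^n$ of parity $(-1)^{k-1}=\ga$. Dehomogenizing along the open Bruhat embedding $(x_1,\ldots,x_{n-1})\mapsto[1:x_1:\cdots:x_{n-1}]$ sends these to the polynomials in $x_1,\ldots,x_{n-1}$ of degree $\leq k-1$; a dimension count against the $K$-type formula of Theorem~\ref{thm:vDM}(2) confirms that this restriction is an isomorphism, both sides having dimension $\binom{n+k-2}{n-1}$. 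From the explicit formula~\eqref{eqn:IDO}, every summand of $\D_k$ is a constant-coefficient partial derivative of total order exactly $k$, so $\D_k$ annihilates every polynomial of degree strictly less than $k$; injectivity of the restriction map to the open dense cell then yields $F(1-k)^\ga\subseteq\Ker(\D_k)^\ga$.

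The main obstacle is pinning down the polynomial realization of $F(1-k)^\ga$ rigorously. A leaner alternative, in the spirit of Section~\ref{sec:Ktype}, is to construct one explicit lowest weight vector $f_0\in F(1-k)^\ga$ arising from the bottom $K$-type of Theorem~\ref{thm:vDM}(2), express it as a polynomial in $x_1,\ldots,x_{n-1}$ of degree $\leq k-1$ on the open Bruhat cell, and verify $\D_k f_0=0$ by direct computation from~\eqref{eqn:IDO}. Either route gives $\Ker(\D_k)^\ga=F(1-k)^\ga$. The image identification is then immediate from the short exact sequence of Theorem~\ref{thm:vDM}(4): $\Im(\D_k)^\ga\simeq I(\triv,1-k)^\ga/\Ker(\D_k)^\ga\simeq T(n+k-1)^\ga$.
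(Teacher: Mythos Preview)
Your proof is correct and follows the same outline as the paper: split by reducibility via Theorem~\ref{thm:vDM}, use the length-two structure to reduce to exhibiting a single non-zero vector in $F(1-k)^\ga$ annihilated by $\D_k$, and conclude the image identification from the short exact sequence.

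The paper's execution of the key step is leaner than either of your sketched alternatives. Rather than identifying all of $F(1-k)^\ga$ with polynomials of degree $\leq k-1$ via a dimension count, or invoking the bottom $K$-type, the paper simply takes a lowest weight vector $f_0\in F(1-k)^\ga$ (which exists since $F(1-k)^\ga$ is finite-dimensional irreducible) and computes directly from \eqref{eqn:dpi2} that $d\pi_{1-k}(N_j^-)=-\tfrac{\partial}{\partial x_j}$; the lowest-weight condition $d\pi_{1-k}(N_j^-)f_0=0$ for all $j$ then forces $f_0$ to be \emph{constant}, so $\D_k f_0=0$ is immediate. Your ``leaner alternative'' is in the same spirit but still assumes $f_0$ is a polynomial of degree $\leq k-1$ as input; the paper derives that $f_0$ is constant purely from the $\fn_-$-action, avoiding any reference to the polynomial model or to $K$-types.
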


\begin{proof}
Since the second assertion follows from the first and Theorem \ref{thm:vDM},
it suffices to consider the first statement.
As $\D_k \not\equiv 0$ for $k \in 1+\Z_{\geq 0}$, 
by Theorem \ref{thm:vDM},
there are only two possibilities on $\Ker(\D_k)^\alpha$, namely,
$\Ker(\D_k)^\alpha= \{0\}$ or  $F(1-k)^\ga$.
It then suffices to show that $\Ker(\D_k)^\alpha \neq \{0\}$ 
as far as $I(\triv,1-k)^\ga$ 
is reducible. 
In what follows, we understand that
$I(\triv,1-k)^\ga$ is a subspace of $C^\infty(\R^{n-1})$
via the isomorphism \eqref{eqn:n-} (see \eqref{eqn:21}).
In particular, the Lie algebra $\fg$ acts on $I(\triv,1-k)^\ga$ via the representation 
$\dpi_{1-k} \equiv \dpi_{(\alpha\otimes\triv,\chi^{1-k})}$ (see \eqref{eqn:dpi2}).

Suppose that $I(\triv, 1-k)^\ga$ is reducible. 
Since $F(1-k)^\ga$ is irreducible and 
finite-dimensional, there exists a lowest weight vector 
$f_0(x_1,\ldots, x_{n-1})\in F(1-k)^\ga$.
We shall show that $\D_k f_0=0$. 

As $f_0$ being a lowest weight vector of 
$F(1-k)^\ga \subset I(\triv, 1-k)^\ga$, 
we have
\begin{equation}\label{eqn:lw}
\dpi_{1-k} (N_j^-)f_0(x_1, \ldots, x_{n-1})=0 
\quad
\text{for all $j \in \{1,\ldots, n-1\}$}.
\end{equation}
A direct computation shows that
$\dpi_{1-k} (N_j^-) = - \frac{\partial}{\partial x_j}$.
Thus, \eqref{eqn:lw} is equivalent to
\begin{equation*}
\frac{\partial}{\partial x_j}f_0(x_1, \ldots, x_{n-1}) = 0
\quad
\text{for all $j \in \{1,\ldots, n-1\}$},
\end{equation*}
which shows that $f_0$ is a constant function. 
Therefore, we have
\begin{equation*}
\D_k f_0(x_1, \ldots, x_{n-1}) 
= \sum_{\mathbf{k} \in \Xi_k} 
\frac{\partial^k}{\partial x^{\mathbf{k}}}f_0(x_1, \ldots, x_{n-1}) 
\otimes 
\widetilde{y}_{\mathbf{k}}
=0.
\qedhere
\end{equation*}
\end{proof}

Corollary \ref{cor:IDO4} below is an immediate consequence 
of Theorems \ref{thm:vDM} and \ref{thm:IDO3}.

\begin{cor}\label{cor:IDO4}
For $(\alpha, k) \in \{\pm \} \times (1+\Z_{\geq 0})$,
the $K$-type formulas of $\Ker(\D_k)^\alpha_K$ and 
$\Im(\D_k)^\alpha_K$ are given as follows.
\begin{enumerate}
\item[\emph{(1)}] $\alpha = +\colon$
\vskip 0.1in
\begin{enumerate}
\item[\emph{(1-a)}] $k \in 1+2\Z_{\geq 0}\colon$
\begin{alignat*}{2}
\Ker(\D_k)^+_K &= F(1-k)^+_K && \simeq
\bigoplus_{\ell=0}^{(k-1)/2} \Cal{H}^{2\ell}(\R^n),\\[5pt]
\Im(\D_k)^+_K&\simeq T(n+k-1)^+_K &&\simeq \bigoplus_{\ell \geq \frac{k+1}{2}}
\Cal{H}^{2\ell}(\R^n).
\end{alignat*}
\item[\emph{(1-b)}] $k \in 2(1+\Z_{\geq 0})\colon$
\begin{alignat*}{2}
\Ker(\D_k)^+_K &=\{0\},&&\\[5pt]
\Im(\D_k)^+_K&\simeq I(\triv,1-k)^+_K &&\simeq \bigoplus_{\ell \in \Z_{\geq 0}}
\Cal{H}^{2\ell}(\R^n).
\end{alignat*}

\end{enumerate}

\item[\emph{(2)}] $\alpha = -\colon$
\vskip 0.1in
\begin{enumerate}
\item[\emph{(2-a)}] $k \in 1+2\Z_{\geq 0}\colon$
\begin{alignat*}{2}
\Ker(\D_k)^-_K &=\{0\},&&\\[5pt]
\Im(\D_k)^-_K&\simeq I(\triv,1-k)^-_K &&\simeq \bigoplus_{\ell \in \Z_{\geq 0}}
\Cal{H}^{2\ell+1}(\R^n).
\end{alignat*}
\item[\emph{(2-b)}] $k \in 2(1+\Z_{\geq 0})\colon$
\begin{alignat*}{2}
\Ker(\D_k)^-_K &= F(1-k)^-_K && \simeq
\bigoplus_{\ell=0}^{(k-2)/2} \Cal{H}^{2\ell+1}(\R^n),\\[5pt]
\Im(\D_k)^-_K&\simeq T(n+k-1)^-_K &&\simeq \bigoplus_{\ell \geq \frac{k}{2}}
\Cal{H}^{2\ell+1}(\R^n).
\end{alignat*}
\end{enumerate}

\end{enumerate}
\end{cor}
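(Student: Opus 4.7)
The plan is to extract the $K$-type formulas by combining the structural information in Theorem \ref{thm:IDO3} with the $K$-type decompositions spelled out in Theorem \ref{thm:vDM}, carrying out a four-fold case split on the sign $\alpha \in \{+,-\}$ and the parity of $k \in 1 + \Z_{\geq 0}$.

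First, for each pair $(\alpha, k)$ I would verify whether the parity condition $\alpha = (-1)^{1-k}$ from Theorem \ref{thm:IDO3} holds. A direct check gives: it holds exactly in cases (1-a) ($\alpha = +$, $k$ odd) and (2-b) ($\alpha = -$, $k$ even), and fails exactly in cases (1-b) and (2-a). Accordingly, Theorem \ref{thm:IDO3} yields $\Ker(\D_k)^\alpha = F(1-k)^\alpha$ in (1-a) and (2-b), and $\Ker(\D_k)^\alpha = \{0\}$ in (1-b) and (2-a).

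Next, for the image I would split into the same two regimes. When $\Ker(\D_k)^\alpha = \{0\}$, the operator $\D_k$ is injective, and since $I(\triv, 1-k)^\alpha$ is then irreducible by Theorem \ref{thm:vDM}(1) (the parity obstruction to reducibility precisely excludes cases (1-b) and (2-a) from Case (A) of that theorem), we obtain $\Im(\D_k)^\alpha \simeq I(\triv, 1-k)^\alpha$, and the $K$-type formula follows from \eqref{eqn:K-type1}. When instead $\Ker(\D_k)^\alpha = F(1-k)^\alpha$, Theorem \ref{thm:IDO3} already supplies the identification $\Im(\D_k)^\alpha \simeq T(n+k-1)^\alpha$, and the $K$-type formula follows from Theorem \ref{thm:vDM}(3) with $m = k-1$.

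Finally, for the $K$-types of the kernels in (1-a) and (2-b), I would substitute $m = k-1$ into the formulas for $F(-m)^\alpha_K$ from Theorem \ref{thm:vDM}(2): in (1-a), $m = k-1$ is even and the upper summation index $m/2 = (k-1)/2$ matches; in (2-b), $m = k-1$ is odd and $(m-1)/2 = (k-2)/2$ matches. The analogous substitutions $m = k-1$ in Theorem \ref{thm:vDM}(3) give the stated lower bounds $\ell \geq (k+1)/2$ and $\ell \geq k/2$ for $\Im(\D_k)^\alpha_K$ in (1-a) and (2-b), respectively.

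There is no real obstacle here beyond careful bookkeeping; the step most prone to an off-by-one error is the translation between the parameter $m$ used in Theorem \ref{thm:vDM} and the parameter $k$ appearing in $\D_k$, together with the parity matching $\alpha = (-1)^{1-k}$ vs.\ $\alpha = (-1)^{-m}$ under the identification $m = k-1$, so this substitution is the one I would write out explicitly to guard against sign mistakes.
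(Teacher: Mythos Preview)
Your proposal is correct and follows exactly the route the paper takes: the paper's proof simply states that the corollary is an immediate consequence of Theorems \ref{thm:vDM} and \ref{thm:IDO3}, and your case-by-case verification of the parity condition $\alpha = (-1)^{1-k}$ together with the substitution $m = k-1$ is precisely the bookkeeping that unpacks this. Your remark that the induced representation is irreducible in cases (1-b) and (2-a) is accurate but not strictly needed, since injectivity of $\D_k$ already yields $\Im(\D_k)^\alpha \simeq I(\triv,1-k)^\alpha$.
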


\section{Appendix: the standardness of the homomorphism $\varphi_k$}
\label{sec:Std}

The aim of this appendix is to show that the homomorphisms $\varphi_k$
in \eqref{eqn:Hom} are all standard maps. We achieve this in Theorem \ref{thm:std}.

\subsection{Standard map}\label{sec:Std1}

We start by introducing the definition of the standard map.
Let $\fg$ be a complex simple Lie algebra. Fix a Cartan subalgebra $\fh$ and 
write $\gD\equiv \gD(\fg, \fh)$ for the set of roots of $\fg$ with respect to $\fh$.
Choose a positive system $\gD^+$ and denote by $\Pi$ the set of simple roots of 
$\gD$. Let $\fb$ denote the Borel subalgebra of $\fg$ associated with $\gD^+$, namely,
$\fb = \fh \oplus \bigoplus_{\ga \in \gD^+} \fg_\ga$, where $\fg_\ga$ is the root space
for $\ga \in \gD^+$.

Let $\IP{\cdot}{\cdot}$ denote the inner product on $\fh^*$ induced from 
a non-degenerate symmetric bilinear form 
of $\fg$. For $\ga \in \gD$, we write $\ga^\vee = 2\ga/\IP{\ga}{\ga}$. Also, write 
$s_\ga$ for the reflection with respect to $\ga \in \gD$. As usual, we let 
$\rho=(1/2)\sum_{\ga \in \gD^+}\ga$ be half the sum of the positive roots.

Let $\fp \supset \fb$ be a standard parabolic subalgebra of $\fg$. Write 
$\fp =\fl \oplus \fn_+$ for the Levi decomposition of $\fp$. We let 
$\Pi(\fl) = \{\ga \in \Pi : \fg_\ga \subset \fl\}$.

Now we put
\begin{equation*}
\mathbf{P}^+_{\fl}:=\{\mu \in \fh^* : \IP{\mu}{\ga^\vee} \in 1+\Z_{\geq 0}
\;\;
\text{for all $\ga \in \Pi(\fl)$}\}.
\end{equation*}
For $\mu \in \mathbf{P}^+_\fl$, let $E(\mu-\rho)$ be the finite-dimensional 
simple $\Cal{U}(\fl)$-module with highest weight $\mu-\rho$. 
By letting $\fn_+$ act trivially, we regard $E(\mu-\rho)$ as a $\Cal{U}(\fp)$-module.
Then the induced module
\begin{equation*}
N_\fp(\mu):=\Cal{U}(\fg)\otimes_{\Cal{U}(\fp)} E(\mu-\rho)
\end{equation*}
is the generalized Verma module with highest weight $\mu-\rho$.
If $\fp = \fb$, then $N(\mu) \equiv N_{\fb}(\mu)$ is the (ordinary) Verma module with highest 
weight $\mu-\rho$.  

\vskip 0.1in


Let $\mu, \eta \in \mathbf{P}^+_\fl$.
It follows from 
a theorem by BGG-Verma 
(cf.\ \cite[Thm.\ 7.6.23]{Dix96} and \cite[Thm.\ 5.1]{Hum08})
that
if $\Hom_{\fg}(N_\fp(\mu), N_\fp(\eta)) \neq \{0\}$, then 
$\Hom_{\fg}(N(\mu), N(\eta)) \neq \{0\}$.

Conversely,
suppose that there exists a non-zero
$\fg$-homomorphism $\varphi\colon N(\mu)\to N(\eta)$.
Let $\pr_\mu\colon N(\mu) \to N_\fp(\mu)$ denote 
the canonical projection map. Then we have 
$\varphi(\Ker(\pr_\mu)) \subset \Ker(\pr_\eta)$
(\cite[Prop.\ 3.1]{Lepowsky77}). Thus, the map $\varphi$ induces a 
$\fg$-homomorphism $\varphi_{\std} \colon N_\fp(\mu)\to N_\fp(\eta)$ such that
the following diagram commutes.
\begin{equation*}
\xymatrix@R-=0.8pc@C-=0.5cm{
N(\mu) \ar[rr]^\varphi \ar[dd]_{\pr_\mu} && N(\eta \ar[dd]^{\pr_\eta})\\
&\circlearrowleft&\\
N_\fp(\mu)\ar[rr]^{\varphi_{\std}}& & N_\fp(\eta)
}
\end{equation*}

The map $\varphi_{\std}$ is called the \emph{standard map} from $N_\fp(\mu)$ to 
$N_\fp(\eta)$ (\cite{Lepowsky77}). As $\dim \Hom_{\fg}(N(\mu), N(\eta)) \leq 1$, 
the standard map $\varphi_{\std}$ is unique up to scalar. 
It is known that the standard map $\varphi_{\std}$ could be zero, and even if 
$\varphi_{\std}=0$, there could be another non-zero map from $N_\fp(\mu)$ to 
$N_\fp(\eta)$. Any homomorphism that is not standard is called 
a \emph{non-standard map}.

It is known when the standard map $\varphi_\std$ is zero.
To state the criterion, we first introduce the notion of a link between two weights.

\begin{defn}[{Bernstein--Gelfand--Gelfand}]\label{def:link}
Let $\mu, \eta \in \fh^*$ and $\beta_1,\ldots, \beta_t \in \gD^+$. Set
$\eta_0 := \eta$ and $\eta_i := s_{\beta_i}\cdots s_{\beta_1}\eta$ for $1\leq i \leq t$.
We say that the sequence $(\beta_1,\ldots, \beta_t)$ links $\eta$ to $\mu$ if 
it satisfies the following two conditions.
\begin{enumerate}
\item $\eta_t = \mu$.
\vskip 0.05in
\item $\IP{\eta_{i-1}}{\beta^\vee_{i}} \in \Z_{\geq 0}$ 
for all $i \in \{1,\ldots, t\}$.
\end{enumerate} 
\end{defn}

The criterion on the vanishing of the standard map $\varphi_{\std}$ is first
studied by Lepowsky (\cite{Lepowsky77}) and then Boe refined Lepowsky's 
criterion (\cite{Boe85}). 
Theorem \ref{thm:Boe} below is a version of Boe's criterion
\cite[Thm.\ 3.3]{Boe85}.

\begin{thm}\label{thm:Boe}
Let $\mu, \eta \in \fh^*$ and suppose that $\Hom_{\fg}(N(\mu), N(\eta)) \neq \{0\}$.
Then the following two conditions on $(\mu,\eta)$ are equivalent.
\begin{enumerate}
\item[\emph{(i)}]
The standard map $\varphi_{\std}\colon N_\fp(\mu) \to N_\fp(\eta)$ is non-zero.
\item[\emph{(ii)}]
For all sequences 
$(\beta_1, \ldots, \beta_t)$ linking $\eta$ to $\mu$,
we have $\eta_1 \in \mathbf{P}^+_\fl$.
\end{enumerate}
\end{thm}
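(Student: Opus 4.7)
The plan is to follow the approach of Boe \cite{Boe85}. By the BGG-Verma theorem, the hypothesis $\Hom_\fg(N(\mu), N(\eta)) \neq \{0\}$ gives a link $(\beta_1, \ldots, \beta_t)$ from $\eta$ to $\mu$ and, with it, a unique-up-to-scalar non-zero $\varphi \in \Hom_\fg(N(\mu), N(\eta))$. The induced standard map $\varphi_\std \colon N_\fp(\mu) \to N_\fp(\eta)$ sends $\pr_\mu(x) \mapsto \pr_\eta(\varphi(x))$, so $\varphi_\std$ vanishes precisely when the sub-Verma $\varphi(N(\mu)) \subset N(\eta)$ is contained in $\Ker(\pr_\eta)$.

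A direct computation with the tensor product definition of $N_\fp(\eta)$ gives
\begin{equation*}
\Ker(\pr_\eta) \;=\; \sum_{\alpha \in \Pi(\fl)} \Cal{U}(\fg)\cdot f_\alpha^{\IP{\eta}{\alpha^\vee}} v_\eta \;\simeq\; \sum_{\alpha \in \Pi(\fl)} N(s_\alpha \eta),
\end{equation*}
each summand being a Verma submodule generated by an $\fl$-integrability singular vector. The theorem thus reduces to the claim that $\varphi(N(\mu))$ is contained in this sum if and only if there exists a link from $\eta$ to $\mu$ with $\eta_1 \notin \mathbf{P}^+_\fl$. For the direction \emph{link with $\eta_1 \notin \mathbf{P}^+_\fl$ implies containment}, one first uses BGG-Verma on the tail $(\beta_2, \ldots, \beta_t)$ to embed $\varphi(N(\mu)) \subseteq N(\eta_1)$; the clean sub-case is $\beta_1 \in \Pi(\fl)$, where $N(\eta_1) = N(s_{\beta_1}\eta)$ is literally one of the generating summands of $\Ker(\pr_\eta)$ (a direct check then confirms $\IP{\eta_1}{\beta_1^\vee} = -\IP{\eta}{\beta_1^\vee} \leq -1$, consistent with $\eta_1 \notin \mathbf{P}^+_\fl$); and the case $\beta_1 \notin \Pi(\fl)$ is handled by a Weyl-group exchange, using some $\alpha \in \Pi(\fl)$ with $\IP{\eta_1}{\alpha^\vee} \leq 0$ provided by $\eta_1 \notin \mathbf{P}^+_\fl$, which lets one reduce to the clean sub-case.

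The main obstacle is the converse direction: given $\varphi(N(\mu)) \subseteq \sum_{\alpha \in \Pi(\fl)} N(s_\alpha \eta)$, one must exhibit an actual link from $\eta$ to $\mu$ whose first reflection takes $\eta$ outside $\mathbf{P}^+_\fl$. The subtlety is that this sum of sub-Vermas is not direct, so $\varphi(N(\mu))$ need not sit inside any single summand, and a link cannot be peeled off mechanically. This is the combinatorial heart of \cite[Thm.\ 3.3]{Boe85}: by an induction on the length of a shortest link from $\eta$ to $\mu$, together with a Bruhat-order analysis on the coset space $W/W^\fl$ (where $W^\fl$ is the Weyl group of $\fl$), one shows that any embedded Verma lying in the sum in fact sits inside some single $N(s_\alpha \eta)$ with $\alpha \in \Pi(\fl)$; the desired link is then built by prepending $\alpha$ to a link from $s_\alpha \eta$ down to $\mu$ guaranteed by a second application of the BGG-Verma theorem.
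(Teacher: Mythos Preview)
The paper does not supply its own proof of Theorem~\ref{thm:Boe}: it is quoted verbatim as ``a version of Boe's criterion \cite[Thm.\ 3.3]{Boe85}'' and used as a black box in the proof of Theorem~\ref{thm:std}. So there is no in-paper argument to compare your proposal against.

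That said, your sketch tracks the architecture of Boe's proof correctly: the identification $\Ker(\pr_\eta)=\sum_{\alpha\in\Pi(\fl)}N(s_\alpha\eta)$, the translation of $\varphi_{\std}=0$ into $N(\mu)\subset\Ker(\pr_\eta)$, and the easy direction (a link with $\eta_1\notin\mathbf{P}^+_\fl$ forces containment) are all right. Your treatment of the hard converse is where the sketch thins out. The assertion that an embedded Verma inside $\sum_{\alpha\in\Pi(\fl)}N(s_\alpha\eta)$ must already lie in a single summand is exactly the non-trivial point; it is not a formality, since sums of Verma submodules are typically not direct, and you essentially defer back to \cite{Boe85} for it. If you want a self-contained write-up you would need to reproduce Boe's inductive/Bruhat-order argument (or an equivalent) in full rather than cite it; as written, your proposal is a faithful outline but not an independent proof.
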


\subsection{The standardness of the homomorphism $\varphi_k$}\label{sec:Std3}

Now we specialize the situation to the one considered in Section \ref{sec:SLn},
that is, $\fg = \f{sl}(n,\C)$ and $\fp$ is the maximal parabolic subalgebra 
corresponding to the partition $n=1 + (n-1)$.
Observe that if $\fg = \f{sl}(2,\C)$, then $N_\fp(\mu)=N_\fb(\mu)$. Thus, 
we assume that $n\geq 3$.

As usual, a Cartan subalgebra $\fh \subset \fg$ is taken to be
\begin{equation*}
\fh = \{\diag(a_1, \ldots, a_n): \sum_{i=1}^n a_i =0\}.
\end{equation*}
We take a set of positive roots $\gD^+$ and simple roots $\Pi$ as
\begin{equation*}
\gD^+=\{ \eps_i - \eps_j : 1 \leq i < j \leq n\}
\end{equation*}
and
\begin{equation*}
\Pi = \{\eps_i - \eps_{i+1} : 1 \leq i \leq n-1\}.
\end{equation*}
We realize $\fh^*$ as a subspace of $\R^{n}$ and write elements in $\fh^*$ in coordinates. For instance, we write $\eps_1-\eps_2 =(1,-1, 0,\ldots, 0)$.

Recall from Theorem \ref{thm:Hom2} that we have
\begin{equation}\label{eqn:Hom3}
\Hom_{\fg}(M_\fp(\sym^k_{n-1},-(1+\tfrac{k}{n-1})), M_\fp(\triv, k-1))
=\C \varphi_k
\end{equation}
for $k \in \Z_{\geq 0}$, where $\varphi_k$ is given in  \eqref{eqn:Hom}.
Let $d\chi \in \fh^*$ be the differential of the character $\chi$ of $A$ defined in 
\eqref{eqn:chi}. Then
\begin{equation*}
d\chi = \frac{1}{n}(n-1, -1, \ldots, -1).
\end{equation*}
Define $\omega_1 \in \fh^*$ by
\begin{equation*}
\omega_1=\frac{1}{n-1}(0, n-2, -1, \ldots, -1).
\end{equation*} 
For $\rho = (1/2)\sum_{\ga \in \gD^+}\ga \in \fh^*$, we have
\begin{equation*}
\rho = \frac{1}{2}(n-1, n-3, \ldots, -(n-3), -(n-1)).
\end{equation*}
For simplicity we let
\begin{align*}
\mu^k&= k\omega_1 -(1+\tfrac{k}{n-1})d\chi+\rho,\\[5pt]
\eta^k &= (k-1)d\chi + \rho.
\end{align*}
We have
\begin{align*}
N_\fp(\mu^k)
&=M_\fp(\sym^k_{n-1},-(1+\tfrac{k}{n-1})), \\[5pt]
N_\fp(\eta^k)
&= M_\fp(\triv, k-1).
\end{align*}
Equation \eqref{eqn:Hom3} is then given as
\begin{equation*}
\Hom_\fg
\left(
N_\fp(\mu^k),
N_\fp(\eta^k)
\right)
=\C\varphi_k.
\end{equation*}

\begin{thm}\label{thm:std}
Let $n\geq 3$. 
Then the homomorphism $\varphi_k$ is standard for all $k \in \Z_{\geq 0}$.
\end{thm}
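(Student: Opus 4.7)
\bigskip

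The plan is to apply Boe's criterion (Theorem \ref{thm:Boe}) to the pair $(\mu^k,\eta^k)$. The case $k=0$ is immediate since $\mu^0=\eta^0$ and $\varphi_0$ is a scalar multiple of the identity, which is the standard map by inspection. For $k\geq 1$, the strategy is to (1) identify all BGG-linking sequences from $\eta^k$ to $\mu^k$, (2) verify $\eta_1\in \mathbf{P}^+_\fl$ for each such sequence, and (3) conclude via Boe's criterion and the one-dimensionality \eqref{eqn:Hom3}.

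First I would compute $\eta^k-\mu^k$ explicitly. Using the given formulas for $d\chi$, $\omega_1$, and $\rho$, a direct coordinate calculation gives
\begin{equation*}
\eta^k-\mu^k=k(\eps_1-\eps_2)=k\alpha_1,
\qquad
\langle \eta^k,\alpha_1^\vee\rangle=k,
\end{equation*}
so $s_{\alpha_1}\eta^k=\mu^k$ and the one-step sequence $(\alpha_1)$ links $\eta^k$ to $\mu^k$. In particular $\Hom_\fg(N(\mu^k),N(\eta^k))\neq\{0\}$ by the BGG theorem, so Boe's criterion applies.

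Next I would show that $(\alpha_1)$ is the \emph{only} linking sequence. For any sequence $(\beta_1,\dots,\beta_t)$ linking $\eta^k$ to $\mu^k$, the telescoping identity
\begin{equation*}
k\alpha_1=\eta^k-\mu^k=\sum_{i=1}^{t}\langle \eta_{i-1},\beta_i^\vee\rangle\,\beta_i
\end{equation*}
expresses $k(\eps_1-\eps_2)$ as a non-negative integer combination of positive roots $\eps_i-\eps_j$. Reading off the coefficient of each $\eps_l$ forces $\beta_i=\alpha_1$ for all $i$: the coefficient of $\eps_l$ for $l\geq 3$ must vanish, and an inductive descent from $l=n$ down to $l=3$ eliminates every root $\eps_i-\eps_j$ with $j\geq 3$, after which the coefficients at $\eps_1$ and $\eps_2$ eliminate every $\eps_2-\eps_j$ and every $\eps_1-\eps_j$ with $j\geq 3$. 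Since $\langle \eta^k,\alpha_1^\vee\rangle=k$ is used up in a single reflection and $\langle \mu^k,\alpha_1^\vee\rangle=-k<0$ precludes any further application of $s_{\alpha_1}$, the only linking sequence is $(\alpha_1)$, with $\eta_1=\mu^k$.

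Finally I would verify $\eta_1=\mu^k\in\mathbf{P}^+_\fl$. Here $\Pi(\fl)=\{\eps_i-\eps_{i+1}:2\leq i\leq n-1\}$, and since $\mu^k$ and $\eta^k$ agree outside their first two coordinates,
\begin{equation*}
\langle \mu^k,(\eps_i-\eps_{i+1})^\vee\rangle=
\begin{cases}
1+k & \text{if } i=2,\\
1 & \text{if } 3\leq i\leq n-1,
\end{cases}
\end{equation*}
so $\mu^k\in\mathbf{P}^+_\fl$. By Theorem \ref{thm:Boe} the standard map $\varphi_{\std}\colon N_\fp(\mu^k)\to N_\fp(\eta^k)$ is non-zero, and by \eqref{eqn:Hom3} it must be a non-zero scalar multiple of $\varphi_k$, proving that $\varphi_k$ is standard. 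The main (and only) subtlety is the uniqueness-of-link step; once that is established the Levi-dominance check is essentially a one-line computation. As a byproduct, this argument also confirms that $\mu^k$ and $\eta^k$ have the same infinitesimal character (since they lie in one Weyl-orbit via $s_{\alpha_1}$), as remarked after Theorem \ref{thm:Hom}.
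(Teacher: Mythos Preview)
Your proof is correct and follows essentially the same approach as the paper: both apply Boe's criterion by identifying $(\eps_1-\eps_2)$ as the unique link from $\eta^k$ to $\mu^k$ and then checking $\eta_1=\mu^k\in\mathbf{P}^+_\fl$. The paper simply asserts the uniqueness of the link, whereas you supply the telescoping/positivity argument in detail; one small point you leave implicit is that $\eta^k$ is strictly dominant (so the first step of any link cannot be a trivial reflection with $\beta_1\neq\alpha_1$), but this follows immediately from your coordinate computations and does not affect the conclusion.
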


\begin{proof}
Since $\dim \Hom_\fg(N_\fp(\mu^k), N_\fp(\eta^k)) =1$, it suffices to show 
the standard map $\varphi_{\std}^k \colon N_\fp(\mu^k)\to N_\fp(\eta^k)$ is non-zero.
We remark that 
$\Hom_\fg\left(N(\mu^k),N(\eta^k)\right)\neq \{0\}$
as $\Hom_\fg\left(N_\fp(\mu^k),N_\fp(\eta^k)\right)\neq \{0\}$.

First, suppose that $k=0$. In this case we have 
$\mu_0 = \eta_0= -d\chi+\rho$. 
Thus, the standard map $\varphi_\std^0$ is $\varphi_\std^0 = \id$; in particular,
$\varphi_\std^0$ is non-zero.

Next, suppose that $k \in 1+\Z_{\geq 0}$. In this case we have
\begin{align*}
\mu^k &= 
\frac{1}{2n}(2(1-k)+n(n-3), (n-1)(n-2+2k), 2(1-k)+n(n-5), \ldots, 2(1-k)-n(n-1)),\\[5pt]
\eta^k &=\frac{1}{2n}((n-1)(n-2+2k), 2(1-k)+n(n-3), 2(1-k)+n(n-5), \ldots, 2(1-k)-n(n-1)) .
\end{align*}
Then $\eps_1-\eps_2$ is the only element that links $\eta^k$ to $\mu^k$.
Since $\eta_1^k = s_{\eps_1-\eps_2}\eta^k=\mu^k \in \mathbf{P}^+_{\fl}$, 
Theorem \ref{thm:Boe} shows that the standard map $\varphi_{\std}^k$ is non-zero.
\end{proof}

\textbf{Acknowledgements.}
The authors are grateful to Dr.\ Ryosuke Nakahama and Dr.\ Masatoshi Kitagawa,
and Prof.\ Toshiyuki Kobayashi for fruitful communication on this paper.
They would also like to show their gratitude to the anonymous referees
to review the article carefully.
The first author was partially supported by JSPS
Grant-in-Aid for Scientific Research(C) (JP22K03362).


\bibliographystyle{amsplain}
\bibliography{KuOrsted}


\end{document}